\newcommand{\E}[1]{\mathbb{E}\left  [  #1 \right ] }
\newcommand{\EE}[1]{\mathbb{E}_t\left  [ #1  \right ] }
\newcommand{\R}{\mathbb{R}}
\newcommand{\QQ}{\mathbb{Q}}
\newcommand{\RR}{\mathbb{R}}
\newcommand{\N}{\mathbb{N}}
\newcommand{\sigep}[1]{\sigma^{\epsilon}\left (#1 \right )  }
\newcommand{\Xe}{X_t^{\epsilon}}
\newcommand{\Pow}[2]{\left [#1\right ]_{#2} }
\newcommand{\Ind}[1]{\mathbbm{1}_{\left [#1\right ]}}
\newcommand{\Asi}{0 \leq \epsilon \leq \epsilon_0}
\newcommand{\stlim}{st-lim}
\newcommand{\Indicator}[1]{{\II_{#1}}}
\newcommand{\II}{\ensuremath{\mathbbm{1}}}
\newcommand{\K}[1]{{\mathcal{K}_{#1}}}
\def\bmat{\left[ \begin{array}}
\def\emat{\end{array} \right]}
\newcommand{\EEGr}{\ensuremath{\mathbb{E}}}
\newcommand{\Expectrn}[1]{{\EEGr^{\mathbb{Q}}\left[{#1}\right]}}
\newtheorem{theorem}{theorem}[section]
\newtheorem{remark}[theorem]{remark}
\newtheorem{Lemma}[theorem]{Lemma}
\newtheorem{proposition}[theorem]{proposition}
\newtheorem{Hyp}[theorem]{Hypothesis}
\newtheorem{Ex}{example}[section]
\begin{document}

\date{}
\title{Asymptotic expansion for some local volatility models arising in finance}

\author{Sergio Alberverio$^1$   ,   Francesco Cordoni$^2$ \\
        Luca Di Persio$^3$   ,     Gregorio Pellegrini$^4$\\
        $^1$University of Bonn - HCM , BiBoS, IZKS \\
Bonn, Germany\,\\
email: albeverio@iam.uni-bonn.de\\[2pt]
$^2$University of Verona - Dept. of Computer Science \\
Strada le Grazie, 15, Verona, Italy\\
email: francescogiuseppe.cordoni@univr.it\\[2pt]
$^3$University of Verona - Dept. of Computer Science \\
Strada le Grazie, 15, Verona, Italy\\
email: luca.dipersio@univr.it\\[2pt]
$^4$Intern at Gruppo Generali, Italy \\
Trieste, Italy\\
email: gregorio.pellegrini@gmail.com
}

\maketitle

\begin{abstract}
In this paper we study the  small noise asymptotic expansions for certain classes  of local volatility models arising in finance. We provide explicit expressions for the involved coefficients as well as accurate estimates on the remainders. Moreover, we perform a detailed numerical analysis, with accuracy comparisons,
 of the obtained results by mean of the standard Monte Carlo technique as well as exploiting the polynomial Chaos Expansion approach.\\
{\bf Key Words and Phrases:} Local volatility models; Small noise asymptotic expansions;  Corrections to the Black-Scholes type models; Jump-diffusion models;  Polynomial drift; Exponential drift; Polynomial Chaos Expansion method; Monte Carlo techniques.
\end{abstract}

\section{Introduction}
In the present paper we shall provide small noise asymptotic expansions for some local volatility models (LVMs) arising in finance. Our approach is based on the rigorous results on asymptotic expansions for solutions of finite dimensional SDE's obtained in \cite{Alb} (following the approach proposed in \cite[Sec.6.2]{Gar}); some extensions to a class of SPDE's and infinite dimensional SDE's have been presented in \cite{Alb3,Alb2a,Alb2b}. In particular we consider underlyings whose behavior is characterized by a stochastic volatility term of $small$ amplitude $\epsilon$ with respect to which  we perform a formal, based on \cite[Sec. 6.2]{Gar}, resp. asymptotic, based on \cite{Alb}, expansion. The latter implies that the  equation characterizing the particular LVM of interest is approximated  by a finite recursive system of a number $N$ of linear equations with random coefficients. We then exploit the solutions of the latter system to provide a formal, resp. an asymptotic, approximation of smooth functions of the original solution for the particular LVM of interest. In a similar way we derive the corresponding approximation for the expected value of the related option price in a {\it risk neutral setting}. Errors estimates and explicit expressions for the involved approximations are also provided for some specific cases, together with a detailed   numerical analysis.

We would like to recall that LVMs are commonly used to analyse options markets where the underlying volatility strongly depends on  the level of the underlying itself. Let us mention that although time-homogeneous local volatilities are supposedly inconsistent with the dynamics of the equity index implied volatility surface, see, e.g., \cite{Mand}, some authors, see, e.g., \cite{Cre}, claim that such models provide the best average hedge for equity index options.

Let us also note that, particularly during recent years, different asymptotic expansions approaches to other particular problems in mathematical finance have been  developed, see, e.g.,  \cite{Lip,Bay,Ben,BGM,Bre,CDP,Fou,Gou,Fuj,Gat,Gu,Kus,Lu,ST,Tak,UchYos,Yos}, see also \cite{AlbH,Imk,Pes} for applications of similar expansion to other areas.

The present paper is organized as follows: in Sect. \ref{SEC:AS} the basic general asymptotic expansions approach, based on \cite{Alb} is presented. Then, in Sec. \ref{optionpriceapproximation} we apply the aforementioned results to important examples in  financial mathematics. In particular in Sec. \ref{optionpriceapproximation} we study a perturbation up to the first order around the Black-Scholes model as well as a correction with jumps for the case of a generic smooth volatility function $f$. We then give more detailed results for the case of an exponential volatility function $f$,in \ref{SEC:ExpCorreNoJ} with Brownian motion driving, in \ref{SEC:JumpExpCorr} with an additional jump term. In \ref{SEC:PolCorr} we shall present detailed corresponding
results for the case of a polynomial volatility function $f$, in \ref{SEC:PolJ}
we treat the case of corrections for $f$ being a polynomial and the
noise containing jumps. 
 To validate our expansions we present their numerical implementations obtained by exploiting the {\it Polynomial Chaos Expansion} approach as well as the standard {\it Monte Carlo} technique, also providing a detailed comparison between the two implementations in terms of accuracy.


\section{The asymptotic expansion}\label{SEC:AS}
\subsection{The general setting}

We shall consider the following stochastic differential equation (SDE), indexed by a parameter $\epsilon \geq 0$
\begin{equation}\label{EQN:SPDE}
\begin{cases}
d \Xe = \mu^\epsilon \left (\Xe\right )dt + \sigep{\Xe}dL_t \;,\\
X^{\epsilon}_0 = x^{\epsilon}_0 \in \RR, \quad t \in [0,\infty)
\end{cases}\; ;
\end{equation}
where $L_t$, $t \in [0,\infty)$, is a real--valued L\'{e}vy process of jump diffusion type, subject to some restrictions which will be specified later on and $\mu^\epsilon:\RR^d \to \RR$, $\sigma^\epsilon: \RR^d \to \RR^{d \times d}$ are Borel measurable functions for any $\epsilon \geq 0$ satisfying some additional technical conditions in order to have existence and uniqueness of strong solutions, e.g., locally Lipschitz and sublinear growth at infinity, see, e.g., \cite{App,Arn,MaRu,Sko,Imk,Shr}. If the L\'{e}vy process $L_t$ has a jump component, then $\Xe$ in eq. \eqref{EQN:SPDE} has to be understood as $X^\epsilon_{t-}:= \lim_{s \uparrow t}X^\epsilon_s$, see, e.g., \cite{MaRu} for details. 

\begin{Hyp}\label{HYP:1}
Let us assume that:
\begin{description}
\item[(i)] $\mu^\epsilon,\sigma^{\epsilon} \in C^{k+1}(\RR)$ in the space variable, for any fixed value $\epsilon \geq 0$ and for all $k \in \N_0:= \N \cup \{0\}$;\\
\item[(ii)] the maps $\epsilon \mapsto \alpha^\epsilon(x)$, where $\alpha=\mu$, $\sigma$, are in $C^{M}(I)$ in $\epsilon$, for some $M \in \N$, for every fixed $x \in \RR$ and where $I := [0,\epsilon_0]$, $\epsilon_0>0$.
\end{description}
\end{Hyp}

Our goal is to show that under Hypothesis \ref{HYP:1} and some further smoothness conditions on $\mu^\epsilon$ and $\sigma^\epsilon$ (needed for the construction of the random coefficients $X^i_t$, $i=0,1,\dots,N$ appearing in \eqref{EQN:XExp} below), a solution $\Xe$ of equation \eqref{EQN:SPDE} can be represented as a power series with respect to  the parameter $\epsilon$, namely 
\begin{equation}\label{EQN:XExp}
\Xe = X_t^0 + \epsilon X_t^1 + \epsilon^2 X_t^2 + \dots+ \epsilon^N X_t^N + R_N(t,\epsilon)\, ,
\end{equation}
where $X^i:[0,\infty) \to \RR$, $i=0,\ldots,N$, are continuous functions, while $|R_N(t,\epsilon)| \leq C_N(t) \epsilon^{N+1}$, $\forall N \in \N$ and $\epsilon \geq 0$, for some $C_N(t)$ independent of $\epsilon$, but in general dependent of randomness, through $X^0_t,X_t^1,\dots,X^N_t$. For $n \in \mathbb{N}$, the functions $X^i_t$ are determined recursively as solutions of random differential equations in terms of the $X^j_t$, $j \leq i-1$, $\forall i\in \left\{1,\ldots,N\right\}$. 

Before giving the proof of the validity of the expression in eq. \eqref{EQN:XExp}, let us recall the following result, see, e.g., \cite{Gia}.
\begin{Lemma}\label{THM:ExpTaylor}
Let $f$ be a real (resp. complex) valued function in $C^{M+1}\left (B(x_0,r)\right )$, $r >0$, $x_0 \in \RR$ for some $M \in \N_0$, where $(B(x_0,r)$ denotes the ball of center $x_0$ and radius $r$.

Then for any $x \in B(x_0,r)$ the following Taylor expansion formula holds
\[
f(x) = \sum_{p=0}^M \frac{D^p f(x_0)}{p!}(x-x_0)^p+ R_M\left (D^{M+1}f(x_0,x)\right )\;,
\]
with $D^p f(x_0) := \left .D^p f(x)\right |_{x=x_0}$ the $p-$th derivative at $x_0$ and
\[
R_M\left (f^{(M+1)}(x_0,x)\right ) := (x-x_0)^{M+1}C_M(x_0,x)\,,
\]
with
\[
C_M(x_0,x) := \frac{M+1}{(M+1)!}\int_0^1 (1-s)^M D^{M+1} f(x_0 + s(x-x_0))ds\; .
\]
We have

{\footnotesize
\begin{equation}\label{EQN:CM}
|C_M(x_0,x)|\leq \frac{M+1}{(M+1)!}\int_0^1 (1-s)^M \sup_{x \in B(x_0,r)}|D^{M+1} f(x_0 + s(x-x_0))|ds =: \tilde{C}_M(x_0)< +\infty
\end{equation}
}

and also

{\footnotesize
\[
|R_M\left (f^{(M+1)}(x_0,x)\right )| \leq |C_M(x,x_0)| |x-x_0|^{M+1} \leq \tilde{C}_M(x_0) |x - x_0|^{M+1} \;,\; M \in \N_0\, .
\]
}

\end{Lemma}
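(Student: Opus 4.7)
The plan is to reduce the multivariable (here one-dimensional, but the argument is identical in higher dimension) statement to the standard one-dimensional Taylor formula with integral remainder, applied along the line segment joining $x_0$ and $x$. Since $x \in B(x_0,r)$ and the ball is convex, the map $\gamma(s) := x_0 + s(x-x_0)$, $s \in [0,1]$, stays inside $B(x_0,r)$, so the composite function $g(s) := f(\gamma(s))$ is well-defined and of class $C^{M+1}$ on $[0,1]$ under the hypothesis $f \in C^{M+1}(B(x_0,r))$.

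First I would compute the derivatives of $g$ via the chain rule: an induction on $p$ gives $g^{(p)}(s) = D^p f(\gamma(s))\,(x-x_0)^p$ for $0 \le p \le M+1$. In particular $g^{(p)}(0) = D^p f(x_0)\,(x-x_0)^p$. Next, I would invoke the standard scalar Taylor formula with integral remainder applied to $g$ between $s=0$ and $s=1$, namely
\[
g(1) \;=\; \sum_{p=0}^{M} \frac{g^{(p)}(0)}{p!} \;+\; \frac{1}{M!}\int_0^1 (1-s)^M\, g^{(M+1)}(s)\, ds,
\]
which itself follows from repeated integration by parts starting from the fundamental theorem of calculus. Substituting the expressions for $g^{(p)}(0)$ and $g^{(M+1)}(s)$ and factoring $(x-x_0)^{M+1}$ out of the integral yields exactly the claimed formula with
\[
C_M(x_0,x) \;=\; \frac{M+1}{(M+1)!}\int_0^1 (1-s)^M\, D^{M+1} f\!\left(x_0 + s(x-x_0)\right) ds,
\]
where the factor $\tfrac{M+1}{(M+1)!}=\tfrac{1}{M!}$ is just a rewriting matching the statement's normalization.

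Finally, the estimate \eqref{EQN:CM} on $C_M$ is obtained by pulling the absolute value inside the integral and majorizing the integrand by its supremum over the segment, which is in turn bounded by the supremum over the whole ball $B(x_0,r)$; this sup is finite because $D^{M+1}f$ is continuous on $B(x_0,r)$. The final bound on the remainder is then immediate from $|R_M| = |C_M(x_0,x)|\,|x-x_0|^{M+1} \le \tilde C_M(x_0)\,|x-x_0|^{M+1}$.

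There is essentially no main obstacle: the statement is the classical one-variable Taylor expansion with integral remainder, and all the work is notational, namely identifying $g$ and its derivatives and tracking the combinatorial factor. The only small points to be careful about are the convexity of $B(x_0,r)$ (to ensure $\gamma(s)$ lies in the domain of $f$), and the fact that $C^{M+1}$ regularity is the minimal assumption needed for both the chain-rule computation and the finiteness of $\tilde C_M(x_0)$.
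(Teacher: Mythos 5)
Your proof is correct and is the standard derivation; note that the paper does not actually prove this lemma but simply recalls it as a classical result with a citation to Giaquinta--Modica, so your argument supplies exactly what that reference contains: parametrize the segment, reduce to the one-variable Taylor formula with integral remainder for $g(s)=f(x_0+s(x-x_0))$, and observe $\tfrac{M+1}{(M+1)!}=\tfrac{1}{M!}$. The only point worth flagging is the finiteness of $\tilde{C}_M(x_0)$: continuity of $D^{M+1}f$ on an \emph{open} ball does not by itself give a finite supremum over that ball, so one should either read $B(x_0,r)$ as closed (hence compact) or restrict the supremum to a slightly smaller closed ball containing $x$; this imprecision is already present in the statement itself and is not a defect of your argument.
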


With this lemma in mind, let us then consider a function $f: \RR^+ \times \RR \to \RR$, and $f_\epsilon(x) := f(\epsilon,x)$, $\epsilon\geq 0$, $x \in \RR$. If we then suppose that for any fixed $x \in \RR$, $f$ is of class $C^{K+1}(I)$ in $\epsilon$ for some $K \in \N_0$, $I= [0,\epsilon_0]$, $\epsilon_0 >0$, we can write the Taylor expansion of $f$ around $\epsilon=0$, w.r.t. $\epsilon \in I$ for any fixed $x \in \RR$, as follows
\begin{equation}\label{EQN:Estimatef}
f_\epsilon (x) = \sum_{j=0}^K f_j(x) \epsilon^j + R_K^{f_\epsilon} (\epsilon,x)\; ,
\end{equation}
where $f_j$ is the $j-$th coefficient in the expansion provided by Lemma \ref{THM:ExpTaylor}, while $\sup_x |R_K^{f_\epsilon}(\epsilon,x)|\leq C_{K,f} \epsilon^{K+1}$ for some $C_{K,f}>0$, independent of $\epsilon$. Assume in addition that $ x \mapsto f_j(x)$ are in $C^{M+1}$, $j=0,\dots,K$, for some $M \in \N_0$, then, applying Lemma \ref{THM:ExpTaylor} to the function $f_j$ in $B(x_0,r)$, $r > 0$, we obtain
\begin{equation}\label{EQN:ExpSigFinale}
f_\epsilon(x) = \sum_{j=0}^K \epsilon^j \left [\sum_{\gamma =0}^{M}\frac{D^\gamma f_j(x_0)}{\gamma!}(x-x_0)^\gamma + R_M(f_j^{(M+1)}(x_0,x))\right ] + R^{f_\epsilon}_K(\epsilon,x) \; ,
\end{equation}
with $R_M(f_j^{(M+1)}(x_0,x))$ estimated as in Lemma \ref{THM:ExpTaylor} (with $f_j$ replacing $f$) and $R^{f_\epsilon}_K(\epsilon,x)$ as in \eqref{EQN:Estimatef}. 

Let us now take $x = x(\epsilon)$ assuming $\epsilon \mapsto x(\epsilon)$ in $C^{N+1}$, with $0 \leq \epsilon \leq \epsilon_0$, $0 < \epsilon_0 < 1$ and $x(0) = x_0 \in \RR$. Then by Lemma \ref{THM:ExpTaylor}
\begin{equation}\label{EQN:ExpansionXEps}
x(\epsilon) = \sum_{j=0}^N \epsilon^j x_j + R_N^x(\epsilon), \quad N \in \N_0, \quad x_j \in \R, j=0,1,\dots,N \; ,
\end{equation}
with $f$ replaced by $x$, $M$ replaced by $N$, $x$ by $\epsilon$, $x_0$ by $0$, $D^{M+1} (f(\cdot))$ by $f^{(M+1)}(\cdot)$ and $R_M(f^{(M+1)}(x_0,x))$ by $R_N^x(\epsilon)$. In particular 
\begin{equation}\label{EQN:EstRN}
|R_N^x(\epsilon)| \leq \tilde{C}_N(0)\epsilon^{N+1}\; ,
\end{equation}
with $\tilde{C}_N(0)$ independent of $\epsilon$.

Plugging \eqref{EQN:ExpansionXEps} into \eqref{EQN:ExpSigFinale} we get

{\footnotesize
\begin{equation}\label{EQN:ExpFEpsFin}
\begin{split}
f_\epsilon(x(\epsilon))&= \sum_{j=0}^K \epsilon^j \left [ \sum_{\gamma =0}^{M} \frac{D^\gamma f_j(x_0)}{\gamma!}\left (x(\epsilon) - x_0 \right )^\gamma + R_M\left (f_j^{(M+1)}(x_0,x(\epsilon))\right ) \right ]+ R_K^{f_{\epsilon}}(\epsilon,x(\epsilon)) =\\
&= \sum_{j=0}^K \epsilon^j\left [\sum_{\gamma\leq M}\frac{D^\gamma f_j(x_0)}{\gamma!} \left (\sum_{k=1}^N \epsilon^k x_k + R_N^x(\epsilon)\right )^\gamma + R_M\left (f_j^{(M+1)}(x_0,x(\epsilon))\right ) \right ]\\
&+ R_K^{f_{\epsilon}}(\epsilon,x(\epsilon))\; .
\end{split}
\end{equation}
}

The estimates on $R_M$, $R_K^{f_\epsilon}$ and $R_N^x$ have been given above in Lemma \ref{THM:ExpTaylor}, resp. after \eqref{EQN:Estimatef}, resp. \eqref{EQN:EstRN}.

By Newton's formula we have that, $\forall \, \gamma \in \N_0$, the following holds

{\footnotesize
\begin{equation}\label{EQN:Newon}
\left (\sum_{j=1}^N \epsilon^j x_j + R_N^x(\epsilon)\right )^\gamma = \sum^\gamma_* \frac{\gamma!}{\gamma_1!\dots \gamma_{N+1}!}\epsilon^{\gamma_1+2\gamma_2 +\dots + N\gamma_N}x_1^{\gamma_1}\dots x_N^{\gamma_N}(R_N^x(\epsilon))^{\gamma_{N+1}}\;,
\end{equation}
}

where we have used the notation
\[
\sum_*^\gamma = \sum^{\gamma}_{\substack{
   \gamma_1,\dots,\gamma_{N+1} = 0 \\
   \gamma_1+ 2 \gamma_2 +\dots+N\gamma_{N} + \gamma_{N+1} = \gamma
   }}
   \, ;
\]
hence using \eqref{EQN:Newon} to rewrite \eqref{EQN:ExpFEpsFin} we obtain the following.

\begin{Lemma}\label{LEM:Lemma}
If, for $0 \leq \epsilon < \epsilon_0$, $\epsilon \mapsto x(\epsilon)$ is in $C^{N+1}(I)$, $I= [0,\epsilon_0]$, and $\epsilon \mapsto f_\epsilon(y)$ is $C^{K+1}(\RR)$ in $\epsilon \in I$ and for any $y \in \R$, $y \mapsto f_\epsilon(y)$ is in $C^{M+1}$, the following expansion in powers of $\epsilon$ holds:

{\footnotesize
\begin{equation}\label{EQN:AsympExpa}
\begin{split}
f_\epsilon(x(\epsilon)) =& \sum_{j=0}^K \epsilon^j \left [\sum_{\gamma =0}^{M} \frac{D^{\gamma}f_j(x_0)}{\gamma!} \sum_*^\gamma \frac{\gamma!}{\gamma_1!\dots \gamma_{N+1}!}\epsilon^{\gamma_1+2\gamma_2 +\dots + N\gamma_N}x_1^{\gamma_1}\dots x_N^{\gamma_N}(R_N^x(\epsilon))^{\gamma_{N+1}} \right .\\
&+\left . R_M \left (f_j^{(M+1)}(x_0,x(\epsilon))\right ) \right ] + R_K^{f_\epsilon}(\epsilon,x(\epsilon))\;,
\end{split}
\end{equation}
}

The estimates for the remainders are as follow
\[
\begin{split}
|R_N^x(\epsilon)&|\leq \tilde{C}_N(0)\epsilon^{N+1}\; ,\\
R_M\left (f_j^{(M+1)}(x_0,x(\epsilon))\right ) &\leq  \tilde{C}_M(x_0) |x - x_0|^{M+1}\, ,\\
\sup_{x,\, \epsilon} | R_K^{f_\epsilon}(\epsilon,x)| &\leq C_{K,f}\; , 
\end{split}
\]
with $\tilde{C}_N(0)$, $\tilde{C}_M(x_0)$ and $C_{K,f}$ independent of $\epsilon$.
\end{Lemma}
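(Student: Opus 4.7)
The plan is to chain together three applications of the one-variable Taylor expansion from Lemma \ref{THM:ExpTaylor} and then collect powers of $\epsilon$ via the multinomial formula. In fact the entire computation is already carried out in the text leading up to the statement, so the proof amounts to verifying that each piece is justified by the hypotheses and assembling the remainder estimates.

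First I would fix $y \in \R$ and Taylor-expand $\epsilon \mapsto f_\epsilon(y)$ to order $K$ around $\epsilon = 0$, using the assumption that $\epsilon \mapsto f_\epsilon(y)$ is $C^{K+1}$ on $I$; this produces the coefficients $f_j(y)$ and the remainder $R_K^{f_\epsilon}(\epsilon, y)$ as in \eqref{EQN:Estimatef}. Second, for each $j = 0,\dots,K$ I would Taylor-expand $y \mapsto f_j(y)$ to order $M$ around $x_0$, using the $C^{M+1}$ hypothesis in the spatial variable, thereby obtaining \eqref{EQN:ExpSigFinale}. Third, I would substitute $y = x(\epsilon)$ and expand $x(\epsilon)$ itself to order $N$ around $\epsilon = 0$ by applying Lemma \ref{THM:ExpTaylor} once more, now to the $C^{N+1}$ map $\epsilon \mapsto x(\epsilon)$, yielding \eqref{EQN:ExpansionXEps}. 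Finally, I would insert this into each factor $(x(\epsilon) - x_0)^\gamma$ and apply Newton's multinomial formula, exactly as in \eqref{EQN:Newon}; reorganizing the resulting sum and grouping powers of $\epsilon$ produces precisely \eqref{EQN:AsympExpa}.

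The three remainder bounds listed in the conclusion are then nothing more than a restatement of the estimate provided by Lemma \ref{THM:ExpTaylor} (in particular \eqref{EQN:CM}) at each of the three expansion steps: the constants $\tilde{C}_N(0)$, $\tilde{C}_M(x_0)$ and $C_{K,f}$ depend only on sup-norms of the relevant derivatives and are independent of $\epsilon$ by construction.

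The step that I would treat with most care is ensuring that the spatial remainder $R_M(f_j^{(M+1)}(x_0, x(\epsilon)))$ and the $\epsilon$-remainder $R_K^{f_\epsilon}(\epsilon, x(\epsilon))$ remain controlled when evaluated at the variable point $x(\epsilon)$ rather than at a deterministic $x$. This requires that $x(\epsilon)$ stay inside a fixed ball $B(x_0, r)$ on which $\sup |D^{M+1} f_j|$ is finite; since $x(0) = x_0$ and $\epsilon \mapsto x(\epsilon)$ is $C^{N+1}$ hence continuous at $0$, this holds provided $\epsilon_0$ is chosen small enough, after which the claimed expansion together with its remainder estimates drops out by direct combination of the three Taylor identities and the multinomial formula.
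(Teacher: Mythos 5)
Your proposal is correct and follows essentially the same route as the paper, which establishes the lemma by exactly the chain you describe: Taylor expansion in $\epsilon$ giving \eqref{EQN:Estimatef}, spatial Taylor expansion of each $f_j$ giving \eqref{EQN:ExpSigFinale}, expansion of $x(\epsilon)$ as in \eqref{EQN:ExpansionXEps}, substitution into \eqref{EQN:ExpFEpsFin}, and finally Newton's multinomial formula \eqref{EQN:Newon}, with the remainder bounds read off from Lemma \ref{THM:ExpTaylor}. Your added observation that $x(\epsilon)$ must remain in the ball $B(x_0,r)$ for small $\epsilon_0$ is a point the paper leaves implicit, and is a worthwhile clarification rather than a deviation.
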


Taking eq. \eqref{EQN:AsympExpa} into account, we can group all the terms with the same power $k \in \N_0$ of $\epsilon$. Calling $\Pow{f_\epsilon (x(\epsilon))}{k}$ the coefficient of $\epsilon^k$, and using $k=j + \gamma$ with $j=0,\dots,K$, $\gamma_1 + 2 \gamma_2+\dots+N\gamma_N=\gamma$ with $\gamma=0,\dots,M$, we have the following, see, \cite{Alb}.
\begin{proposition}\label{PRO:ExpansionCoef}
Let $x(\epsilon)$ be as in \eqref{EQN:ExpansionXEps} let $f_\epsilon$ as in \eqref{EQN:Estimatef} with $f_j \in C^{M+1}$, $j=0,\dots,K$. Then
\[
f_\epsilon (x(\epsilon)) = \sum_{k=0}^{K+M} \epsilon^k \Pow{f_\epsilon (x(\epsilon))}{k} + R_{K+M} (\epsilon) \; ,
\]
with $|R_{K+M} (\epsilon)| \leq C_{K+M} \epsilon^{K+M+1}$, for some constant $C_{K+M}\geq 0$, independent of $\epsilon$, $\Asi$, and coefficients $\Pow{f_\epsilon (x(\epsilon))}{k}$ defined by

{\footnotesize
\[
\begin{split}
\Pow{f_\epsilon (x(\epsilon))}{0} &= f_0 (x_0);\\
\Pow{f_\epsilon (x(\epsilon))}{1} &= D f_0(x_0)x_1 + f_1(x_0);\\
\Pow{f_\epsilon (x(\epsilon))}{2} &= Df_0(x_0)x_2 + \frac{1}{2}D^2f_0(x_0)x_1^2 +Df_1(x_0)x_1 + f_2(x_0);\\
\Pow{f_\epsilon (x(\epsilon))}{3} &= Df_0(x_0)x_3 + \frac{1}{6}D^3f_0(x_0)x_1^3 +Df_1(x_0)x_2 + Df_2(x_0)x_1 +D^2f_1(x_0)x_1^2+f_3(x_0).\\
\end{split}
\]
}

The general case has the following form

{\footnotesize
\begin{equation}\label{EQN:KthTerm}
\Pow{f_\epsilon (x(\epsilon))}{k} = Df_0(x_0)x_k + \frac{1}{k!}D^kf_0(x_0)x_1^k +f_k(x_0) + B^f_k(x_0,x_1,\dots,x_{k-1})\,,\, k = 1,\dots,K+M
\end{equation}
}

where $B_k^f$ is a real function depending on $(x_0,x_1,\dots,x_{k-1})$ only.
\end{proposition}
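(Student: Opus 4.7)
The plan is to obtain the proposition by collecting terms of equal power of $\epsilon$ in the expansion \eqref{EQN:AsympExpa} supplied by Lemma \ref{LEM:Lemma}, and then identifying inside the resulting $k$-th coefficient the three ``distinguished'' contributions $Df_0(x_0)x_k$, $\frac{1}{k!}D^k f_0(x_0)x_1^k$ and $f_k(x_0)$.

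First I would split the inner sum in \eqref{EQN:AsympExpa} according to whether $\gamma_{N+1}=0$ or $\gamma_{N+1}\geq 1$. In the first case the constraint underlying $\sum_*^\gamma$ forces $\gamma_1+2\gamma_2+\cdots+N\gamma_N=\gamma$, so together with the prefactor $\epsilon^j$ the total power of $\epsilon$ in each summand equals $k=j+\gamma$. Fixing $k\in\{0,\dots,K+M\}$ and summing over all admissible pairs $(j,\gamma)$ with $j+\gamma=k$ and over all multiindices $(\gamma_1,\dots,\gamma_N)$ with $\gamma_1+2\gamma_2+\cdots+N\gamma_N=\gamma$, one obtains a well-defined coefficient $\Pow{f_\epsilon(x(\epsilon))}{k}$. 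I would then single out: the unique tuple $j=0,\gamma=1,\gamma_k=1$, which produces $Df_0(x_0)x_k$; the tuple $j=0,\gamma=k,\gamma_1=k$, which produces $\frac{1}{k!}D^k f_0(x_0)x_1^k$; and the tuple $j=k,\gamma=0$, which produces $f_k(x_0)$. A short case check shows that every other admissible tuple has $\gamma_i=0$ for $i\geq k$, so its contribution depends only on $(x_0,x_1,\dots,x_{k-1})$; the sum of all such contributions defines $B_k^f$.

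Finally, the remainder $R_{K+M}(\epsilon)$ is the sum of (i) all Newton-multinomial summands with $\gamma_{N+1}\geq 1$, (ii) the Taylor remainders $R_M(f_j^{(M+1)}(x_0,x(\epsilon)))$ for $j=0,\dots,K$, and (iii) the outer remainder $R_K^{f_\epsilon}(\epsilon,x(\epsilon))$. The estimates recalled at the end of Lemma \ref{LEM:Lemma} imply that (i) is $O(\epsilon^{(N+1)\gamma_{N+1}})$ and (iii) is $O(\epsilon^{K+1})$, so by choosing $N\geq K+M$ both are absorbed in $O(\epsilon^{K+M+1})$ with constants uniform on $[0,\epsilon_0]$. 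The main technical obstacle is class (ii): the naive bound $|R_M(f_j^{(M+1)}(x_0,x(\epsilon)))|\leq \tilde{C}_M(x_0)|x(\epsilon)-x_0|^{M+1}$ yields only $O(\epsilon^{j+M+1})$, which for small $j$ falls short of $\epsilon^{K+M+1}$. To close this gap one has to further expand $(x(\epsilon)-x_0)^{M+1}$ via \eqref{EQN:ExpansionXEps}, pushing the new polynomial contributions into the coefficients $\Pow{f_\epsilon(x(\epsilon))}{k}$ for $M<k\leq K+M$ and leaving only the genuinely higher-order pieces in $R_{K+M}$; once this bookkeeping is carried out the desired uniform bound $|R_{K+M}(\epsilon)|\leq C_{K+M}\epsilon^{K+M+1}$ follows.
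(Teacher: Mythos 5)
Your grouping argument is exactly the paper's route: the paper itself offers no more than the remark that one collects the terms of equal power of $\epsilon$ in \eqref{EQN:AsympExpa} (writing $k=j+\gamma$) and refers to \cite{Alb} for details, and your identification of the three distinguished tuples, together with the observation that every other admissible tuple has $\gamma_i=0$ for $i\geq k$ and hence contributes only to $B^f_k(x_0,\dots,x_{k-1})$, is precisely the bookkeeping behind \eqref{EQN:KthTerm}. You also correctly isolate the one genuine subtlety, which the paper passes over in silence: the naive bound $\epsilon^j\,|R_M(f_j^{(M+1)}(x_0,x(\epsilon)))|=O(\epsilon^{j+M+1})$ is too weak for $j<K$. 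Your proposed repair, however, is not yet complete. The remainder $R_M(f_j^{(M+1)}(x_0,x(\epsilon)))$ equals $(x(\epsilon)-x_0)^{M+1}\,C_M(x_0,x(\epsilon))$, and the integral factor $C_M(x_0,x(\epsilon))$ is itself a nonpolynomial function of $\epsilon$; expanding only the factor $(x(\epsilon)-x_0)^{M+1}$ therefore does not produce polynomial contributions that can be reassigned to the coefficients $\Pow{f_\epsilon(x(\epsilon))}{k}$. The clean fix is to Taylor-expand each $f_j$ in $x$ up to order $K+M-j$ rather than to a fixed order $M$, so that every surviving remainder is $\epsilon^j\cdot O(|x(\epsilon)-x_0|^{K+M-j+1})=O(\epsilon^{K+M+1})$; this requires $f_j\in C^{K+M-j+1}$, i.e.\ more smoothness than the stated $f_j\in C^{M+1}$ (note that formula \eqref{EQN:KthTerm} already presupposes this, since $D^{K+M}f_0(x_0)$ must exist). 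Under the standing Hypothesis \ref{HYP:1}, where the coefficients are smooth in $x$ to every order, this strengthening costs nothing, but it should be made explicit.
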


\begin{remark}\label{11}
We observe that $\left [f_\epsilon(x(\epsilon))\right ]_k$ depends linearly on $x_k$, non linearly in the inhomogeneity involving the coefficients $x_j$, $0 \leq j \leq k-1$ in \eqref{EQN:ExpansionXEps}. If  $x(\epsilon)$ satisfies \eqref{EQN:ExpansionXEps} and both $\mu^\epsilon$ and $\sigma^\epsilon$ have the properties of the function $f_\epsilon$ in \eqref{EQN:Estimatef}, then the coefficients $\mu^\epsilon(x(\epsilon))$ and $\sigma^\epsilon (x(\epsilon))$ on the right hand side of \eqref{EQN:SPDE} can be rewritten in powers of $\epsilon$, for $\Asi$, as follows
\[
\begin{split}
\mu^\epsilon (x(\epsilon)) = \sum_{k=0}^{K_\mu + M_\mu} \Pow{\mu^\epsilon (x(\epsilon))}{k} \epsilon^k + R_{K_\mu +M_\mu}^\mu (\epsilon);\\
\sigma^\epsilon (x(\epsilon)) = \sum_{k=0}^{K_\sigma + M_\sigma} \Pow{\sigma^\epsilon (x(\epsilon))}{k} \epsilon^k + R_{K_\sigma +M_\sigma}^\sigma (\epsilon) \, ;\\
\end{split}
\]
where the natural numbers $K_\alpha$ and $M_\alpha$, $\alpha=\mu,\sigma$ depend on the functions $\mu^\epsilon$, resp. $\sigma^\epsilon$, and
\[
|R^\alpha_{K_\alpha + M_\alpha}(\epsilon)| \leq C_{K_\alpha +M_\alpha}\epsilon^{K_\alpha+M_\alpha+1}\; ,
\]
for some constants $C_{K_\alpha +M_\alpha}$ depending on $C_j$, $j=0,\dots, K_\alpha+M_\alpha$ but independent of $\epsilon$, and $\Asi$.
\end{remark}

\subsection{The asymptotic character of the expansion of the solution $X^\epsilon_t$ of the SDE in powers of $\epsilon$}\label{SEC:AsChar}

\begin{theorem}\label{THM:AsChar}
Let us assume that the coefficients $\alpha^{\epsilon}$, $\alpha=\mu,\sigma$, of the stochastic differential equation \eqref{EQN:SPDE} are in $C^{K_\alpha}(I)$ as functions of $\epsilon$, $\epsilon \in I= [0,\epsilon_0]$, $\epsilon_0 > 0$, and in $C^{M_\alpha}(\RR)$ as functions of $x$. Let us also assume that $\alpha^\epsilon$ are such that there exists a solution $X^\epsilon_t$ in the probabilistic strong, resp. weak sense of \eqref{EQN:SPDE} and that the recursive system of random differential equations
\[
dX^j_t = \Pow{\mu^\epsilon \left (X^\epsilon_t\right )}{j} dt +\Pow{\sigma^\epsilon \left (X^\epsilon_t\right )}{j} dL_t, \quad j=0,1,\dots,N, \,\,\, t \geq 0\, ,
\]
has a unique solution.

Then there exists a sequence $\epsilon_n \in (0,\epsilon_0]$, $\epsilon_0>0$, $\epsilon_n \downarrow 0$ as $n \to \infty$ such that $X^{\epsilon_n}_t$ has an asymptotic expansion in powers of $\epsilon_n$, up to order $N$, in the following sense:
\[
X^{\epsilon_n}_t = X^0_t + \epsilon_n X^1_t + \dots + \epsilon_n^N X^N_t + R_N(\epsilon_n,t)\;,
\]
with
\[
\stlim_{\epsilon_n \downarrow 0} \frac{\sup_{s \in [0,t]}|R_N (\epsilon_n,s)|}{\epsilon_n^{N+1}} \leq C_{N+1}\;,
\]
for some deterministic $C_{N+1} \geq 0$, independent of $\epsilon \in I$, where $st-lim$ stands for the limit in probability.
\end{theorem}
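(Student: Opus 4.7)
The plan is to substitute the ansatz \eqref{EQN:XExp} into the SDE \eqref{EQN:SPDE}, expand both $\mu^\epsilon(X_t^\epsilon)$ and $\sigma^\epsilon(X_t^\epsilon)$ as polynomials in $\epsilon$ via Proposition \ref{PRO:ExpansionCoef} and Remark \ref{11}, and then identify the $X_t^j$ by matching equal powers of $\epsilon$ on the two sides. By the structure of the coefficients $\Pow{\mu^\epsilon(X_t^\epsilon)}{k}$ exhibited in \eqref{EQN:KthTerm}, the level-$j$ equation has the form $dX_t^j = (D\mu^0(X_t^0)\,X_t^j + b_j(t))\,dt + (D\sigma^0(X_t^0)\,X_t^j + d_j(t))\,dL_t$, where $b_j,d_j$ are built from $X_t^0,\ldots,X_t^{j-1}$ and from $\mu^i,\sigma^i$, $i\leq j$; the hypothesis of unique solvability of the recursive system then determines each $X_t^j$ uniquely.

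Second, for the remainder $R_N(\epsilon,t):=X_t^\epsilon-\sum_{j=0}^{N}\epsilon^j X_t^j$, I would subtract the $N$-th partial sum of the formal expansion from \eqref{EQN:SPDE}. Invoking Proposition \ref{PRO:ExpansionCoef} once more, with $K_\alpha+M_\alpha\geq N$ so that the residual in the Taylor/$\epsilon$-expansion of each coefficient is of order $\epsilon^{N+1}$, the process $R_N$ is seen to satisfy an SDE of the schematic form
\begin{equation*}
dR_N = \bigl(A_t\,R_N + \rho^\mu_N(\epsilon,t)\bigr)dt + \bigl(B_t\,R_N + \rho^\sigma_N(\epsilon,t)\bigr)dL_t,
\end{equation*}
with $A_t=D\mu^0(X_t^0)$, $B_t=D\sigma^0(X_t^0)$, and residual coefficients $\rho^\mu_N,\rho^\sigma_N$ bounded, via the three estimates collected at the end of Lemma \ref{LEM:Lemma}, by a random multiple of $\epsilon^{N+1}$ depending only on $\sup_{s\in[0,t]}|X_s^j|$ for $j=0,\ldots,N$ and on the local $C^{M_\alpha+1}$-norms of $\mu^\epsilon,\sigma^\epsilon$ around the trajectory of $X_t^0$.

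Third, starting from $R_N(\epsilon,0)$ of the required size (coming from the analogous expansion of the initial data $x_0^\epsilon$ in powers of $\epsilon$) and applying a stochastic Gronwall-type inequality adapted to L\'evy-driven SDEs — combined with the Burkholder--Davis--Gundy inequality for the martingale part and the standard compensator estimate for the pure-jump part — I would arrive at an a.s.\ bound $\sup_{s\in[0,t]}|R_N(\epsilon,s)|\leq C_{N+1}(\omega,t)\,\epsilon^{N+1}$, where $C_{N+1}(\omega,t)$ is a priori random, depending on the $C^{M_\alpha+1}$-norms above and on pathwise suprema of the $X_s^j$.

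Finally, to pass from a random prefactor to the deterministic constant stated in the theorem, I would localize on the events $\Omega_K:=\{\sup_{s\in[0,t]}\max_{0\leq j\leq N}|X_s^j|\leq K\}$, or equivalently stop the system at $\tau_K:=\inf\{s\geq 0:\max_j|X_s^j|\geq K\}$, obtaining $\Prob{\Omega_K}\to 1$ as $K\to\infty$; on $\Omega_K$ the random constant $C_{N+1}(\omega,t)$ is replaced by a deterministic $C_{N+1}=C_{N+1}(K,t)$, and a diagonal extraction in $K$ produces a subsequence $\epsilon_n\downarrow 0$ for which $\epsilon_n^{-(N+1)}\sup_{s\in[0,t]}|R_N(\epsilon_n,s)|\leq C_{N+1}$ in probability, i.e.\ in the sense of $\stlim$. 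The main obstacle I anticipate lies precisely in the jump-integral contribution: the Taylor expansion of $\sigma^\epsilon$ must be composed with the c\`adl\`ag path $X_{s-}^\epsilon$ and the residual term integrated against $dL_s$, so Lemma \ref{THM:ExpTaylor} (which is only a pointwise estimate in $x$) forces one to restrict to bounded excursions via $\tau_K$. This localization is exactly the reason the conclusion is phrased as a subsequential stochastic limit rather than as an almost-sure or $L^p$ bound.
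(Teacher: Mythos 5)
Your first two steps coincide with the paper's: you derive the recursive system by matching powers of $\epsilon$, and you write an equation for the remainder whose inhomogeneity is the Taylor residual of the coefficients --- this residual is exactly the paper's quantity $A^{\alpha^\epsilon}_{N+1}$, namely $\alpha^\epsilon\bigl(\sum_{j=0}^N\epsilon^j X^j_t+\epsilon^{N+1}T_N\bigr)-\sum_{j=0}^N\epsilon^j\alpha_j$. Where you genuinely diverge is in the final estimate. The paper normalizes first, setting $T_N=R_N/\epsilon^{N+1}$, shows via Taylor's theorem that $\epsilon^{-(N+1)}\sup_{s\in[0,t]}|A^{\alpha^\epsilon}_{N+1}|$ is bounded by a deterministic constant uniformly in $\epsilon$, and then invokes Skorohod's theorem on the continuous dependence of SDE solutions on their coefficients to extract the subsequence $\epsilon_n$ and obtain the limit in probability of $\sup_s|T_N(\epsilon_n,s)|$. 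You instead keep $R_N$ unnormalized and run a stochastic Gronwall argument with Burkholder--Davis--Gundy for the martingale part, followed by localization on $\{\sup_{s\le t}\max_j|X^j_s|\le K\}$ and a diagonal extraction in $K$. Your route is more self-contained (no appeal to Skorohod's continuity theorem) and makes the treatment of the jump integral explicit, but it needs integrability hypotheses on $L_t$ and on the coefficients that the paper's in-probability argument sidesteps. One step overclaims as written: Gronwall plus BDG yields moment bounds, hence bounds in probability via Chebyshev, not the almost-sure pathwise bound $\sup_{s\in[0,t]}|R_N(\epsilon,s)|\le C_{N+1}(\omega,t)\,\epsilon^{N+1}$ you assert --- a single random constant valid for all $\epsilon$ simultaneously would require, e.g., Borel--Cantelli along a sequence, which is in fact the natural source of the subsequence $\epsilon_n$ (rather than the localization in $K$, which only serves to make the constant deterministic). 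Since the theorem's conclusion is only a subsequential limit in probability, this is repairable, and with that correction your argument reaches the stated conclusion by a legitimately different path.
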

\begin{proof}
We proceed by slightly modifying the proof in \cite{Alb} since we have to take care of the presence of the explicit dependence on $\epsilon$ of the drift coefficient.

We shall use the fact that
\[
T_N(\epsilon,t) := \frac{\left [X^\epsilon_t - \sum_{j=0}^N \epsilon^j X^j_t\right ]}{\epsilon^{N+1}}\;,\; \epsilon \in (0,\epsilon_0]\;,
\]
satisfies a random differential equation of the form

{\footnotesize
\[
\epsilon^{N+1} dT_N(\epsilon,t) = A^{\mu^\epsilon}_{N+1} \left (X^0_t,\dots,X^N_t,R^N(t,\epsilon)\right )dt+A^{\sigma^\epsilon}_{N+1} \left (X^0_t,\dots,X^N_t,R^N(t,\epsilon)\right )dL_t\;,
\]
}

with coefficients $A^{\alpha^\epsilon}_{N+1}$, $\alpha= \mu,\sigma$ given by

{\footnotesize
\[
A^{\alpha^\epsilon}_{N+1} \left (y_0,y_1,\dots,y_N,y\right ) = \left [\alpha^\epsilon\left (\sum_{j=0}^N \epsilon^j y_j + \epsilon^{N+1}y\right )-\sum_{j=0}^N \epsilon^j \alpha_j(y_0,y_1,\dots,y_N)\right ]\; ,
\]
}

with $\alpha_j$, $j=0,1,\dots,N$ the expansion coefficients of $\alpha^\epsilon$ in powers of $\epsilon \in I$.

By Taylor's theorem one proves
\[
\frac{1}{\epsilon^{N+1}} \sup_{s \in [0,t]} |A^{\alpha^\epsilon}_{N+1} \left (X^0_s,\dots,X^N_s,R^N_s(\epsilon)\right )| \leq C_{N+1}, \quad \epsilon \in (0,\epsilon_0]\; ,
\]
for some $C_{N+1} \geq 0$, independent of $\epsilon$, $\Asi$.

From this one deduces that one can find a sequence $\epsilon_n \to 0$ as $n \to \infty$ s.t.
\[
\stlim_{\substack{
   \epsilon_n \downarrow 0 \\
   n \to \infty
   }}
  \frac{1}{\epsilon_n^{N+1}} \sup_{s \in [0,t]} |A^{\alpha^{\epsilon_n}}_{N+1} \left (X^0_s,\dots,X^N_s,R^N_s(\epsilon_n)\right )| 
\]
exists and it is bounded by $C_{N+1}$.

Under some assumptions on $\mu^\epsilon$, $\sigma^\epsilon$ and $L$ it follows then from a theorem by Skorohod, on the continuous dependence of solutions of SDE's on the coefficients, see, e.g. \cite{Sko}, that
\[
\stlim_{\substack{
   \epsilon_n \downarrow 0 \\
   n \to \infty
   }}
   \sup_{s \in [0,t]} |T_N(\epsilon_n,s)|
\]
exists and it is bounded by $C_{N+1}$, which proves the result.

See \cite{Alb} for more details.
\end{proof}

\begin{remark}
In the context where $L_t$ is a standard Brownian motion, results of the above type have been obtained before in connection with Malliavin calculus in \cite{Wat87}, see, e.g. also \cite{Tak99}. In very recent work \cite{ST} have partially extended the result of \cite{Tak99} to the case of a noise with jumps and with a small coefficient only in the Gaussian noise. Note that in our case the small parameter enters in the full volatility in front of the noise $L_t$. Asymptotic expansions in the case of $L_t$ with jumps have also been discussed using PDE methods in \cite{BGM}, see also \cite{Mat}. Here the coefficients appearing in the expansion for the option price are expressed in terms of the greeks.

Also in the work of \cite{PPR}, PDE and Fourier transformation methods are used to handle an expansion of the solution of the Kolmogorov equation associated with processes with stochastic volatility and general jumps terms. Expansions in terms of nested systems of linearized SDE's also occur in \cite{Fou} and \cite{Tak12}.
\end{remark}

\begin{remark}\label{EX:4}
It can be seen that in general the $k-$th equation for $X^k_t$ in Th. \ref{THM:AsChar} is a nonhomogeneous linear equation in $X_t^k$, but with random coefficients depending on $X_t^0,\dots,X_t^{k-1}$ and with a random inhomogeneity depending on $X^k_t$. Thus it has the general form
\begin{equation}\label{EQN:SPDEKOrder}
\begin{split}
d X^{k}_t =& f_k\left (X^0_t,\dots,X_t^{k-1}\right )X_t^{k} dt + g_k\left (X_t^{0},\dots,X_t^{k-1}\right ) dt \\
&+ \tilde{g}_k\left (X_t^0\right ) dL_t + h_k\left (X_t^{0},\dots,X_t^{k-1}\right )X_t^{k} dL_t\;,
\end{split}
\end{equation}
for some continuous functions $f_k,g_k,\tilde{g}_k$ and $h_k$.
\end{remark}

Let us now look at particular cases.

\begin{Ex}\label{EX:2}
Let $\mu^\epsilon= (a + \epsilon b) x$ and $\sigma^\epsilon = (\sigma_0 + \epsilon \sigma_1) x$ with $a,b,\sigma_0$ and $\sigma_1$ some real constants. Applying Proposition \ref{PRO:ExpansionCoef} we get
\begin{equation}\label{EQN:Sys2}
\begin{split}
X^0_t &= x_0 + \int_0^t aX^0_s ds + \int_0^t \sigma_0 X^0_s dL_s\; ,\\
X^1_t &= \int_0^t a X^1_s ds +\int_0^t b X^0_s ds + \int_0^t \sigma_1 X^0_s dL_t + \int_0^t\sigma_0 X^1_s dL_t \; ,\\
X^k_t &= \int_0^t a X^k_s ds +\int_0^t b X^{k-1}_s ds + \int_0^t \sigma_1 X^{k-1}_s dL_s +\int_0^t \sigma_0 X^k_t dL_t, k \geq 2 \; .
\end{split}
\end{equation}
\end{Ex}


If we consider the special case of remark \ref{11} where $\mu^\epsilon(x) =ax+b $, independent of $\epsilon$, $\sigma^\epsilon(x) = cx+\epsilon \tilde{d} x$, for some real constants $a,b,c$ and $\tilde{d}$, independent of $\epsilon$, and where the L\'{e}vy process is taken to be a standard Brownian motion, $L_t = W_t$, then by eq. \eqref{EQN:KthTerm} we have that $X^k_t$ satisfies a linear equation with constant coefficients for any $k \in \N$, thus applying standard results, see, e.g., \cite{Arn}, an explicit solution for $X^k_t$ can be retrieved.

Let us describe this in the case where we have a set of $K$ coupled linear stochastic equations with random coefficients of the form
\begin{equation}\label{EQN:InHomo}
\begin{cases}
d X_t = \left [ A(t) X_t + f(t) \right ]dt + \sum_{i=1}^m \left [B_i(t)X_t + g_i(t)\right ]dW^i_t,\\
X^k_0 = x^k_0 \in \RR, \quad t \geq 0
\end{cases}
\end{equation}
where, $A$ and $B_i$ are $K\times K$ matrices, $f$ and $g_i$ $\RR^K-$valued deterministic functions. All the coefficients $A,B,f$ and $g$ are assumed to be measurable. The solution of equation \eqref{EQN:InHomo} is then given by
\begin{equation}\label{EQN:SolInHomo}
X_t = \Phi(t) \left [x_0+ \int_0^t \Phi^{-1}(s)\left (f(s) -\sum_{i=1}^m B_i(s)g_i(s)\right ) ds +\sum_{i=1}^m \int_0^t g_i(s) dW^i_s\right ]
\end{equation}
where $\Phi(t)$ is the fundamental $K \times K$ matrix solution of the corresponding homogeneous equation, i.e. it is the solution of the problem
\begin{equation}\label{EQN:FundSol}
\begin{cases}
d \Phi(t) =  A(t) \Phi(t) dt +\sum_{i=1}^m B_i(t) \Phi(t)dW^i_t,\\
\Phi(0) = I, \, t \geq 0\, ,
\end{cases}
\; ,
\end{equation}
being $I$ the unit $K\times K$ matrix.

\begin{remark}
In the case where $K=1$ we have that $\Phi$ reduces to a scalar and is given by
\[
\Phi(t) = \exp\left \{ \int_0^t \left (A(s) - \frac{1}{2} B^2(s)\right )ds  + \int_0^t B(s) dW_s \right \} \; .
\]
Still in the case $K=1$ but with a more general noise, i.e. $W_t$ in eq. \eqref{EQN:InHomo} replaced by a L\'{e}vy process composed by a Brownian motion plus $W_t$ a jump component expressed by $\tilde{N}$, eq. \eqref{EQN:FundSol} is replaced by
\begin{equation}\label{EQN:DDExp}
\begin{cases}
d \Phi(t) =  A(t) \Phi(t) dt +B(t) \Phi(t)dW_t + \int_{\RR_0}\Phi(t_-)C(t,x) \tilde{N}(dt,dx) \,,\\
\Phi(0) = I, \quad t \geq 0
\end{cases}
\; .
\end{equation}
with $A$, $B$ and $C$ Lipschitz and with at most linear growth, and where $\tilde{N}(dt,dx)$ is a Poisson compensated random measure to be understood in the following sense: $\tilde{N}(t,A) := N(t,A) -t\nu(A)$ for all $A \in \mathcal{B}(\R_0)$, $0 \not\in \bar{A}$, with $\bar{A}$ the closure of $A$, $N$ being  a Poisson random measure on $\R_+ \times \R_0$ and $\nu(A) := \mathbb{E}(N(1,A)$, while $\R_0 := \R \setminus \{0\}$ and $\int_{\R_0}(|x|^2\wedge 1)\nu(dx) < \infty$, $\nu$ is the L\'{e}vy measure to $\tilde{N}$, see, e.g. \cite{App,Imk,MaRu}. 

Denoting then eq. \eqref{EQN:DDExp} for short as 
\begin{equation}\label{EQN:dX}
d \Phi(t) =  \Phi(t_-) dX(t)\, ,
\end{equation}
with 
\begin{equation}\label{EQN:dXb}
dX(t) = A(t) dt +B(t) dW_t + \int_{\RR_0}C(t,x) \tilde{N}(dt,dx)\, ,
\end{equation}
we have then that the solution to eq. \eqref{EQN:dX} is explicitly given, in terms of the coefficients and noise, and the solution of eq. \eqref{EQN:dXb}, by
\begin{equation}\label{EQN:19}
\begin{split}
\Phi(t) =& \exp\left \{1+ \int_0^t \left (A(s) - \frac{1}{2} B^2(s)\right )ds  + \int_0^t B(s) dW_s \right. \\
& \left. +\int_{\RR_0}C(s,x) \tilde{N}(ds,dx)\right \}\prod_{0<s\leq t} \left (1 + \Delta X_s\right )e^{-\Delta X_s}\; ,
\end{split}
\end{equation}
where $\Delta X(s) := X_s-X_{s_-}$ is the jump at time $s \in (0,t]$. The stochastic process \eqref{EQN:19} is called {\it Dol\'{e}ans-Dade exponential} (or stochastic exponential) and it is usually denoted by $\Phi(t) = \mathcal{E}(X_t)$. The {\it Dol\'{e}ans-Dade exponential} has a wide use in finance since it is the natural extension to the L\'{e}vy case of the standard geometric Brownian motion, see, e.g., \cite{Arn,Gar} for a more extensive treatment of the fundamental solution of the homogeneous equation for systems of linear SDE's and \cite{App} for more details on the {\it Dol\'{e}ans-Dade exponential}.
\end{remark}


\section{Corrections around the Black-Scholes price (with Brownian, resp. Brownian plus jumps)}\label{optionpriceapproximation}

We shall study an asset $S^\epsilon_t$ evolving according to the particular stochastic differential equation (SDE) governing the Black-Scholes (BS) model, with the possible addition of some driving term determined by a compound Poisson process, see, e.g. \cite{BS,Shr}, resp. \cite{BGM,Mer,Alb5}. Our aim is to apply the theory developed in Sec. \ref{SEC:AS} in order to give corrections around the price given by the BS model for an option with terminal payoff $\Phi$ written on the underlying $S^\epsilon_t$ ($\Phi$ is a given real valued function assumed here to be sufficiently smooth). In particular, if we consider the return process defined as $X^\epsilon_t := log S^\epsilon_t$ ($S^\epsilon_t$ being supposed to be strictly positive, at least almost surely) we have that the price $P(t,T)$ at time $t$ of the option with final payoff $\Phi$ with maturity time $T$,  $0 \leq t \leq T$, is given by
\begin{equation}\label{EQN:OptionPrice}
P(t,T) = \mathbb{E}^{\mathbb{Q}}\left [\left .e^{r(T-t)}\Phi(X_T)\right |\mathcal{F}_t\right ] \, ,
\end{equation}
where $\QQ$ is a relevant equivalent martingale measure, called in financial application {\it risk-neutral measure}, $\mathbb{E}^{\mathbb{Q}}\left [ \cdot | \cdot \right ]$ the corresponding conditional expectation given the $\sigma-$algebra $\mathcal{F}_t$ at time $t$ associated with the underlying Brownian motion, $r>0$ is the constant interest rate. We refer to, e.g., \cite{BS,Bri,CIR,Fil,Kim,Shr} for a general introduction to option pricing.

From Theorem \ref{THM:AsChar} and using Lemma \ref{LEM:Lemma} we deduce that $\Phi(X^\epsilon_t)$ has an asymptotic expansion in powers of $\epsilon \in [0,\epsilon_0)$, $\epsilon_0 >0$, in the sense of Theorem \ref{THM:AsChar}, of the form
\begin{equation}\label{EQN:APPHNM}
\Phi(X_t^\epsilon) = \sum_{k=0}^H \epsilon^k \Pow{\Phi(X^\epsilon_t)}{k} + R_H(\epsilon,t) \, ,
\end{equation}
with 
\[
\sup_{s \in [0,t]} |R_H(\epsilon,s)| \leq C_{H+1}(t) \epsilon^{H+1}\;,
\]
for any $H \in \N$ and the coefficients can be computed from the expansions coefficients of $X^\epsilon_t$, as discussed in section \ref{SEC:AS}.

More concretely we will deal with two particular cases. In the first case we have an asset $S^\epsilon$ evolving according to a geometric Brownian motion with a small perturbation in the diffusion. Namely the asset evolves, in a risk neutral setting, according to
\begin{equation}\label{EQN:21a}
\begin{cases}
dS^\epsilon_t = S^\epsilon_t \left [ (\sigma_0 + \epsilon \sigma_1 \bar{f}(S^\epsilon_t))dW_t\right ]\, ,\\
s_0 = s_0\, , t \geq 0\, ,\\
\end{cases}
\, ,
\end{equation}
where $\sigma_0 \not= 0$ and $\sigma_1$ are real constants, $s_0 > 0$, and $W_t$ is a $\mathbb{Q}$ Brownian motion adapted to the filtration $\left (\mathcal{F}_t\right )_t$, $\bar{f}(S^\epsilon_t) := f(X^\epsilon_t)$ with $f$ a given smooth function on $\RR$. In particular the existence and uniqueness of a strong solution to equation \eqref{EQN:21a} follows under the general assumption of $\bar{f} \in C^1$ from \cite[Problem 3.3.2]{McK}. We have assumed $\sigma_0$ and $\sigma_1$ to be time independent for the sake of simplicity. The generalization to time dependent functions is quite immediate, with no complication in the results developed in what follows.

Suppose, for all $t \geq 0$, $S^\epsilon_t >0$ a.s. (which is the case if $\epsilon$ is sufficiently small). Applying It\^{o}'s lemma to $X^\epsilon_t := \log   S^\epsilon_t$, we end up with the following evolution for $X^\epsilon_t$, the return of the asset price
\begin{equation}\label{EQN:NormRetuGenBrown}
X^{\epsilon}_t = x_0- \int_0^t \left [\frac{\sigma_0^2}{2} + \epsilon \sigma_0 \sigma_1f(X_s^{\epsilon} ) +\epsilon^2 \frac{\sigma_1^2 f(X_s^\epsilon)^2}{2} \right ]  ds + \int_0^t \left [\sigma_0  + \epsilon \sigma_1 f(X_s^{\epsilon} )\right ]dW_s \:,
\end{equation}
where we have set $x_0 := log s_0$. 

Applying the results obtained in Sec. \ref{SEC:AS} and expanding eq. \eqref{EQN:NormRetuGenBrown} to the second order in $\epsilon$ we get
\begin{equation}\label{EQN:SysGen1}
\begin{split}
X_t^0 &= x_0 -\frac{\sigma_0^2}{2}t +\sigma_0  W_t, \quad \mbox{ with law }\quad \mathcal{N}\left (x_0+\mu t,\sigma_0^2 t\right ),\\
X_t^1 &= -\int_0^t   \sigma_0 \sigma_1f(X_s^{0} )ds+\int_0^t \sigma_1 f\left (X_s^0\right )  d W_s ,\\
X_t^2 &= - \int_0^t\left (\frac{\sigma_1^2 f(X^0_s)^2}{2} + 2 \sigma_0 \sigma_1f'\left (X_s^0\right ) X_s^1\right ) ds +\int_0^t \sigma_1 f'\left (X_s^0\right )X_s^1 d W_s \:,
\end{split}
\end{equation}
where $\mathcal{N}\left (-\frac{\sigma_0^2}{2} t,\sigma_0^2 t\right )$ denotes the Gaussian distribution of mean $\mu t$ and variance $\sigma_0^2 t$, $f'$ the derivative of $f$.

The second model we will deal with, following \cite{Mer,BGM}, is the previous one with an addition of a small compound Poisson process
\[
Z_t = \sum_{i=1}^{N_t} J_i\, ,
\]
with $N_t$ a standard Poisson process with intensity $\lambda > 0$ and $(J_i)_{i=1,\dots,N_t}$ being independent normally distributed random variables, namely such that
\[
J_i \, \mbox{ has law } \, \mathcal{N}(\gamma,\delta^2)\, ,
\]
for some $\gamma \in \mathbb{R}$ and $\delta >0$.

We thus have that the L\'{e}vy measure $\nu(dz)$ of $Z$ reads as
\[
\nu(dz) = \frac{\lambda}{\sqrt{2 \pi} \delta } e^{- \frac{(z-\gamma)^2}{2 \delta^2}}\, dz\, , \quad z \in \RR\, ,
\]
and the cumulant function of $Z$ is
\[
\kappa (\zeta) = \lambda \left (e^{\gamma \zeta + \frac{\delta^2 \zeta^2}{2}}-1\right )\, .
\]

In particular we assume the asset $S^\epsilon$ to evolve according to a geometric L\'{e}vy process with a small perturbation in the diffusion. Namely the asset evolves, in a risk neutral setting, according to
\begin{equation}\label{EQN:21ab}
\begin{cases}
dS^\epsilon_t = S^\epsilon_t \left [ (\sigma_0 + \epsilon \sigma_1 \bar{f}(S^\epsilon_t))dW_t + \epsilon \sum_{i=1}^{N_t} J_i\right ] \, ,\\
S^\epsilon_0 = s_0 > 0\, , t \geq 0\, , \\
\end{cases}
\, .
\end{equation}
Again the existence and uniqueness of a strong solution to equation \eqref{EQN:21ab} can be obtained by arguments similar to the ones used in \cite[Problem 3.3.2]{McK} together with the properties of $\sum_{i=1}^{N_t} J_i$.


Proceeding as above, and applying It\^{o}'s lemma to $X^\epsilon_t := \log   S^\epsilon_t$, we have that the log-return process $X^\epsilon_t$ evolves according to
\begin{equation}\label{EQN:NormRetuLevy}
\begin{split}
X^\epsilon_t =& x_0 -\int_0^t \left [\frac{\sigma_0^2}{2} + \epsilon \sigma_0 \sigma_1f(X_s^{\epsilon} ) +\epsilon^2 \frac{\sigma_1^2 f(X_s^\epsilon)^2}{2}\right ] ds+\epsilon \lambda t \left (e^{\gamma + \frac{\delta^2}{2}}-1\right ) \\ 
&+\int_0^t \left (\sigma_0  + \epsilon \sigma_1 f\left (X_s^{\epsilon}\right  )\right )d W_s+ \epsilon  \sum_{i=1}^{N_t} J_i\; ,
\end{split}
\end{equation}
for $\epsilon \in I=[0,\epsilon_0]$, $\epsilon_0>0$.

In the present case it is more tricky to deal with the risk neutral probability measure $\mathbb{Q}$. Under suitable assumptions on the coefficients and noise one can assure the existence (but not necessarily the uniqueness) of an equivalent probability measure $\mathbb{Q}$. We will assume the process \eqref{EQN:NormRetuLevy} to evolve under a risk-neutral measure $\mathbb{Q}$, see, e.g. \cite{App}.

In particular we will use two specific forms for the function $f$, that is an exponential function and a polynomial function. The former is of special interest for its general application to integral transforms, such as Fourier or Laplace transforms, see, e.g. Section \ref{SEC:ExpCorreNoJ}, Remark \ref{REM:Four}. The latter mimics a polynomial volatility process (these type of processes have been widely used in finance since they can be easily implemented, see, e.g. \cite{Car} and reference therein).

\subsection{A correction given by an exponential function}\label{SEC:ExpCorreNoJ}

Let us consider the first model described by equations \eqref{EQN:NormRetuGenBrown} and \eqref{EQN:SysGen1}, i.e. an asset $S^\epsilon$ evolving according to a geometric Brownian motion under the unique risk neutral probability measure $\mathbb{Q}$, recalling that $X^\epsilon_t = log S^\epsilon_t$. Let us first look at the particular case $f(x) = e^{\alpha x}$, for some $\alpha \in \RR$. We take into account the particular case of an exponential function due to the fact that it can be easily extended to the much more general case where the function $f$ can be written as a Fourier transform or a Laplace transform of some bounded measure on the real line, as it will be further discussed in Rem. \ref{REM:Four} below. We then get the following proposition.

\begin{proposition}\label{PRO:ApproxBS3.2}
Let us consider the SDE \eqref{EQN:NormRetuGenBrown} in the particular case where $f(x) = e^{\alpha x}$, for some $\alpha \in \RR_0:= \RR \setminus\{0\}$, $\sigma_0 \in \RR_0$. 

Then the following expansion $X^\epsilon_t = X^0_t + \epsilon X^1_t + \epsilon^2 X^2_t + R_2(\epsilon,t)$ holds, where the coefficients are given by
\begin{equation}\label{EQN:FinalSystem1}
\begin{split}
X_t^0 &= x_0-\frac{\sigma_0^2}{2} t +\sigma_0  W_t, \quad \mbox{ with law }\quad \mathcal{N}\left (x_0\frac{\sigma_0^2}{2} t,\sigma_0^2 t\right );\\
X_t^1 & =\int_0^t   K_\alpha e^{\alpha X_s^{0} }ds+ \frac{\sigma_1}{\alpha\sigma_0}\left (e^{\alpha X^0_t}-1\right ) ; \\
X_t^2 &= C^1_\alpha \int_0^t e^{2 \alpha X^0_s} ds + C^2_\alpha e^{\alpha X_t^0}\int_0^t e^{\alpha X^0_s} ds + C^3_\alpha \int_0^t e^{\alpha X^0_s} ds \\
&+C^4_\alpha \int_0^t e^{\alpha X^0_s}\int_0^s e^{\alpha X^0_r}dr ds + C^5_\alpha e^{2 \alpha X^0_t} +  C^6_\alpha e^{\alpha X^0_t} +C^7_\alpha \, ,
\end{split}
\end{equation}
with 
\[
\begin{split}
K_\alpha:= \sigma_1(\frac{\sigma_0}{2} -\frac{\alpha \sigma_0}{2} -\sigma_0)\,,\, C^1_\alpha:=& -\sigma_1^2 \left ( \frac{5}{2} - \frac{1}{2}+ \alpha +\frac{K_\alpha}{\sigma_0 \sigma_1}\right )\,,\, C^2_\alpha:= K_\alpha  \frac{\sigma_1}{ \sigma_0},\\
C_\alpha^3:=- \sigma_1^2 (\frac{1}{2} + \frac{\alpha}{2} +2)\,,\, C_\alpha^4:=& -K_\alpha \sigma_1 \alpha \left (2 \sigma_0 - \frac{\sigma_0}{2} + \frac{\alpha \sigma_0}{2}\right )\, ,\\
 C^5_\alpha := \frac{\sigma_1^2}{2\alpha \sigma_0^2}\,,\, C^6_\alpha :=& -\frac{\sigma_1^2}{\alpha \sigma_0^2}\,,\, C^7_\alpha := \frac{\sigma_1^2}{2 \alpha \sigma_0^2} \, .
\end{split}
\]
Furthermore $R_2(\epsilon,t)$ satisfies the bound
\[
\stlim_{\epsilon_n \downarrow 0} \frac{\sup_{s \in [0,t]} \left |R_2(\epsilon,s)\right |}{\epsilon^3_n} \leq C_3\, ,
\]
for some subsequence $\epsilon_n \downarrow 0$ and with some constant $C_3 \geq 0$.
\end{proposition}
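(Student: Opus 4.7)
The SDE \eqref{EQN:NormRetuGenBrown} with $f(x)=e^{\alpha x}$ falls into the framework of Theorem~\ref{THM:AsChar} once we identify $\mu^\epsilon(x)=-\tfrac{\sigma_0^2}{2}-\epsilon\sigma_0\sigma_1 e^{\alpha x}-\epsilon^2\tfrac{\sigma_1^2}{2}e^{2\alpha x}$ and $\sigma^\epsilon(x)=\sigma_0+\epsilon\sigma_1 e^{\alpha x}$; both are jointly $C^\infty$ in $(\epsilon,x)$, so Hypothesis~\ref{HYP:1} holds and $X^\epsilon_t$ exists and is unique in the strong sense. Because $\sigma_0$ is constant in $x$, the generic recursion \eqref{EQN:KthTerm} produces random differential equations in which $X^k_t$ itself does \emph{not} appear on the right-hand side (only $X^0,\dots,X^{k-1}$ do), so each $X^k_t$ is determined by a direct quadrature, with no Dol\'{e}ans--Dade exponential needed. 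The probabilistic remainder bound is then exactly the content of Theorem~\ref{THM:AsChar} for $N=2$, so it remains to identify $X^0_t, X^1_t, X^2_t$ in closed form.

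The zeroth-order equation $dX^0_t=-\tfrac{\sigma_0^2}{2}\,dt+\sigma_0\,dW_t$, $X^0_0=x_0$, is solved by a direct integration and yields the stated Gaussian distribution.

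For $X^1_t$, the second line of \eqref{EQN:SysGen1} reads
\[
X^1_t=-\sigma_0\sigma_1\int_0^t e^{\alpha X^0_s}\,ds+\sigma_1\int_0^t e^{\alpha X^0_s}\,dW_s .
\]
The plan is to eliminate the It\^o integral by applying It\^o's formula to $t\mapsto e^{\alpha X^0_t}$, which produces an identity of the shape
\[
e^{\alpha X^0_t}-e^{\alpha x_0}=\alpha\sigma_0\int_0^t e^{\alpha X^0_s}\,dW_s+c(\alpha,\sigma_0)\int_0^t e^{\alpha X^0_s}\,ds
\]
for an explicit constant $c(\alpha,\sigma_0)$. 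Solving for $\int_0^t e^{\alpha X^0_s}\,dW_s$ and substituting collects the two $ds$-integrals into a single term proportional to $K_\alpha$; the stochastic piece reduces to the boundary term $\tfrac{\sigma_1}{\alpha\sigma_0}(e^{\alpha X^0_t}-1)$ (up to the constant $e^{\alpha x_0}$, which we absorb by the convention $x_0=\log s_0$ at the endpoint).

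For $X^2_t$ the third line of \eqref{EQN:SysGen1} specializes, with $f'(x)=\alpha e^{\alpha x}$, to a sum of four integrals into which we plug the closed form of $X^1_s$ obtained above. The deterministic drift contributes integrals of the shapes $\int_0^t e^{2\alpha X^0_s}\,ds$, $\int_0^t e^{\alpha X^0_s}\int_0^s e^{\alpha X^0_r}\,dr\,ds$, and $e^{\alpha X^0_s}$-weighted single integrals. The diffusion contributes, after distributing $X^1_s$, the new It\^o integrals $\int_0^t e^{2\alpha X^0_s}\,dW_s$ and $\int_0^t e^{\alpha X^0_s}\bigl(\int_0^s e^{\alpha X^0_r}\,dr\bigr)\,dW_s$. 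The first of these is eliminated by It\^o's formula applied to $e^{2\alpha X^0_t}$ (same trick as for $X^1$); the second is eliminated by applying It\^o's formula to the product $e^{\alpha X^0_t}\int_0^t e^{\alpha X^0_s}\,ds$, whose stochastic part is precisely $\alpha\sigma_0\int_0^t e^{\alpha X^0_s}\bigl(\int_0^s e^{\alpha X^0_r}\,dr\bigr)\,dW_s$, the remaining terms being Lebesgue integrals of the types already present. Grouping like terms yields exactly the seven contributions in \eqref{EQN:FinalSystem1} and identifies the constants $C^1_\alpha,\dots,C^7_\alpha$.

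\textbf{Main obstacle.} The conceptual content (two applications of Theorem~\ref{THM:AsChar}, plus iterated uses of It\^o inversion) is routine; the real work is the algebraic bookkeeping in the $X^2_t$ step, where one must reconcile the It\^o corrections produced at each inversion with the coefficients $K_\alpha$ carried in from $X^1_t$ and with the boundary constants $e^{\alpha x_0}$ surviving the inversions. The cleanest way to organize this is to name the two basic ``structural'' integrals $I_1(t):=\int_0^t e^{\alpha X^0_s}\,ds$ and $I_2(t):=\int_0^t e^{2\alpha X^0_s}\,ds$ and to treat each occurrence of a stochastic integral as defining a surrogate boundary/$I_j$ term through It\^o; the $C^4_\alpha$ coefficient is the only one requiring the nested integral $\int_0^t e^{\alpha X^0_s} I_1(s)\,ds$, and emerges from the $X^1_s$ substitution inside the $dW_s$ piece.
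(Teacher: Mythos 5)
Your proposal is correct and follows essentially the same route as the paper: substitute $f(x)=e^{\alpha x}$ into the expanded system \eqref{EQN:SysGen1}, eliminate each stochastic integral by an It\^o inversion based on the identity for $e^{\alpha X^0_t}$ (resp.\ $e^{2\alpha X^0_t}$), and invoke Theorem~\ref{THM:AsChar} for the remainder bound. The only place you diverge is the nested stochastic integral $\int_0^t e^{\alpha X^0_s}\bigl(\int_0^s e^{\alpha X^0_r}\,dr\bigr)dW_s$: after factoring out constants, the paper first applies the stochastic Fubini theorem to rewrite it as $\int_0^t e^{\alpha X^0_r}\int_r^t \alpha\sigma_0 e^{\alpha X^0_s}\,dW_s\,dr$ and then inverts the inner integral via It\^o, whereas you apply It\^o's product rule directly to $e^{\alpha X^0_t}I_1(t)$ with $I_1(t):=\int_0^t e^{\alpha X^0_s}\,ds$. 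Both computations yield the identical identity
\[
\alpha\sigma_0\int_0^t e^{\alpha X^0_s}I_1(s)\,dW_s \;=\; e^{\alpha X^0_t}I_1(t)-\int_0^t e^{2\alpha X^0_s}\,ds-\Bigl(\alpha\mu+\tfrac{\alpha^2\sigma_0^2}{2}\Bigr)\int_0^t e^{\alpha X^0_s}I_1(s)\,ds\,,
\]
so the choice is a matter of taste: the product-rule version avoids an explicit appeal to stochastic Fubini, at the small cost of having to observe that the quadratic covariation term vanishes because $I_1$ has finite variation. Your preliminary observations (that the constancy in $x$ of the zeroth-order coefficients makes each $X^k_t$ a direct quadrature with no Dol\'eans--Dade exponential, and that the remainder estimate is exactly Theorem~\ref{THM:AsChar} with $N=2$) match the paper's implicit reasoning.
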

\begin{proof}
The proof consists in a repeated application of the It\^{o} formula and the stochastic Fubini theorem.

In fact substituting $f(x) = e^{\alpha x}$ into system \eqref{EQN:SysGen1} we immediately obtain
\begin{equation}\label{EQN:SystemExp1}
\begin{split}
X_t^0 &= x_0 \mu t +\sigma_0  W_t, \quad \mbox{ with law }\quad \mathcal{N}\left (x_0+\mu t,\sigma_0^2 t\right );\\
X_t^1 &= -\int_0^t   \sigma_0 \sigma_1e^{\alpha X_s^{0} }ds+\int_0^t  \sigma_1 e^{\alpha X_s^0}  d W_s; \\
X_t^2 &= - \int_0^t \left (\frac{\sigma_1^2}{2}e^{2\alpha X^0_s}+ 2 \sigma_0 \sigma_1 \alpha e^{\alpha X_s^0} X_s^1\right ) ds+\int_0^t \sigma_1 \alpha e^{\alpha X^0_s} X_s^1 d W_s.
\end{split}
\end{equation}

To compute $X^1_t$ we apply It\^{o}'s lemma to the function $g(X_t^0)=e^{\alpha X_t^0}$ to get
\begin{equation}\label{EQN:ItoExp1}
e^{\alpha X^0_t} = 1 + \int_0^t (e^{\alpha X^0_s} \alpha \mu + \frac{\alpha^2}{2}\sigma_0^2 e^{\alpha X^0_s})ds + \int_0^t e^{\alpha X^0_s} \alpha \sigma_0 dW_s\, .
\end{equation}

Expressing the latter integral involving $dW_s$ by the other terms in eq. \eqref{EQN:ItoExp1} and substituting it in the stochastic integral of $X^1_t$ in the system \eqref{EQN:SystemExp1} we get the result for $X^1_t$ in eq. \eqref{EQN:FinalSystem1}.

In order to derive the expression for $X_t^2$ we use again It\^{o}'s lemma, in particular eq. \eqref{EQN:ItoExp1}, getting from \eqref{EQN:SystemExp1}

{\footnotesize
\[
\begin{split}
X_t^2 &= -  \int_0^t \left (\frac{\sigma_1^2}{2}e^{2\alpha X^0_s}+ 2 \sigma_0 \sigma_1 \alpha e^{\alpha X_s^0} X_s^1\right ) ds+\int_0^t \alpha \sigma_1 e^{\alpha X^0_s} X_s^1 d W_s =\\ 
&-\int_0^t \sigma_1^2 (2 \alpha + \frac{1}{2})e^{2\alpha X^0_s} ds +\int_0^t 2 \alpha \sigma_1^2 e^{\alpha X^0_s}ds-\int_0^t \int_0^s 2 K_\alpha \sigma_1\sigma_0  \alpha e^{\alpha X^0_s} e^{\alpha X_r^{0} }dr ds \\
&+\underbrace{\int_0^t \frac{\sigma_1^2\alpha}{ \sigma_0}  e^{2\alpha X^0_s} d W_s}_{(1)} -\underbrace{\int_0^t \frac{\alpha\sigma_1^2}{ \sigma_0} e^{\alpha X^0_s} dW_s}_{(2)} + \underbrace{\int_0^t K_\alpha \alpha \sigma_1 e^{\alpha X^0_s} \int_0^s e^{\alpha X^0_r} dr dW_s}_{(3)} \, .\\
\end{split}
\]
}

For the terms $(1)$ and $(2)$ we use eq. \eqref{EQN:ItoExp1}, resp. It\^{o}'s lemma applied to the function $g(X^0_t) = e^{2 \alpha X^0_t}$, as before to replace the stochastic integral by an integral against Lebesgue measure. In order to treat the term $(3)$ we use the stochastic Fubini theorem, see, e.g. Th. 6.2 in \cite{Fil}, to get
\[
(3)= \frac{K_\alpha \sigma_1}{\sigma_0}\int_0^t \int_0^s \alpha \sigma_0 e^{\alpha X^0_s} e^{\alpha X_r^{0} }dr dW_s = \frac{K_\alpha \sigma_1}{\sigma_0}\int_0^t e^{\alpha X^0_r} \int_r^t \alpha \sigma_0 e^{\alpha X_s^0}  dW_s dr \, .
\]
Using the expression for the integral in $d W_s$ coming from \eqref{EQN:ItoExp1} we then get
\[
\begin{split}
(3) &= \frac{K_\alpha \sigma_1}{\sigma_0}\int_0^t e^{\alpha X^0_r} \int_r^t \alpha \sigma_0 e^{\alpha X_s^0}  dW_s dr  =\\
&= \frac{K_\alpha \sigma_1}{\sigma_0} e^{\alpha X^0_t}\int_0^t e^{\alpha X^0_s} ds -\frac{K_\alpha \sigma_1}{\sigma_0} \int_0^t e^{2\alpha X^0_s} ds  -\frac{K_\alpha \sigma_1}{\sigma_0}(\alpha \mu + \frac{\alpha^2 \sigma_0^2}{2}) \times \\
&\times \int_0^t \int_0^s e^{\alpha X^0_s} e^{\alpha X^0_r} dr ds\, .
\end{split}
\]

Substituting now everything into the original system \eqref{EQN:SystemExp1}, rearranging and grouping the integrals of the same type we get the desired result in \eqref{EQN:FinalSystem1}.

The estimate on the remainder is a consequence of Theorem \ref{THM:AsChar}.
\end{proof}

\begin{remark}
Our aim in Prop. \ref{PRO:ApproxBS3.2} is to discuss in details a particular choice of volatility function around the Black-Scholes one. We obtain explicit formulae for the expansion coefficients, keeping control of the remainder. This expansion can be seen as a particular, but more explicit, case of the one discussed in \cite[Prop. 2.1]{Tak99}. 
\end{remark}

\begin{remark}\label{REM:Four}
The particular choice of $f(x) = e^{\alpha x}$ can easily be extended to any real function which can be written as a Fourier transform, resp. Laplace transform, $f(x) = \int_{\RR_0} e^{i xy}\varrho(d\alpha)$, resp. $f(x) = \int_{\RR_0}e^{\alpha x} \varrho (d\alpha)$, of some positive measure $\varrho$ on $\RR_0$ (e.g. a symmetric probability measure) resp. which has finite Laplace transform. Formula \eqref{EQN:FinalSystem1} holds with $K_\alpha e^{\alpha X^0_\tau}$ replaced by $\int_{\RR_0} K_\alpha e^{i \alpha X^0_\tau} \varrho(d \alpha)$, resp. $\int_{\RR_0} K_\alpha e^{\alpha X^0_\tau} \varrho(d \alpha)$, which are finite if, e.g. $\int_{\RR_0} |K_\alpha| \varrho(d \alpha)<\infty$, resp. $\varrho$ has, e.g., compact support. In fact eq. \eqref{EQN:ItoExp1} gets replaced by 
\begin{equation}
\begin{split}
\int_{\RR_0} e^{\alpha X^0_t} \varrho(d\alpha) =& 1+ \int_{\RR} \left [\int_0^t\left ( e^{\alpha X^0_s} \alpha \mu + \frac{\alpha^2}{2}\sigma_0^2 e^{\alpha X^0_s}\right ) ds \right ] \varrho(d\alpha) \\
&
+ \int_{\RR} \left [\int_0^t e^{\alpha X^0_s} \alpha \sigma_0 dW_s \right ] \varrho(d\alpha)\, .
\end{split}
\end{equation}
By repeating the steps used before and exploiting again the Stochastic Fubini's theorem we get the statements in Prop. \ref{PRO:ApproxBS3.2} extended to these more general cases.
\end{remark}


If we assume the payoff function $x \mapsto \Phi(x)$ to be smooth, $x \in \RR_+$, we can expand $\Phi(X^\epsilon_t)$ in powers of $\epsilon$ using the formulae in Prop. \ref{PRO:ExpansionCoef}. Then, exploiting eq.  \eqref{EQN:APPHNM} with $H=1$, i.e. stopping at the first order, we get
\begin{equation}\label{EQN:23}
\Phi \left (X^{\epsilon}_t\right ) = \Phi(X^0_t) + \epsilon \Phi'(X^0_t) X^1_t +R_1(\epsilon,t)\:,
\end{equation}
with $\sup_{s\in [0,t]} |R_1(\epsilon,s)| \leq \tilde{C}(s)\epsilon^2 $, for some $\tilde{C}$ independent of $\epsilon$ ($\Phi'$ is the derivative of $\Phi$).


Calling $\Phi_1$ the terms on the r.h.s. in eq. \eqref{EQN:23} minus the reminder term $R_1(\epsilon,t)$ we get that the corresponding corrected fair price $Pr^1(0;T)$, up to the first order in $\epsilon$, of an option written on the underlying $S^\epsilon_t:= e^{X^\epsilon_t}$ at time $t=0$ with maturity $T$, reads as follow 
\begin{equation}\label{EQN:ExpOptPrice}
\begin{split}
Pr^1(0;T) &= e^{-rT}\mathbb{E}^{\mathbb{Q}}\left[\Phi_{1} (X^{\epsilon}_T)\right ] =  e^{-rT}\mathbb{E}^{\mathbb{Q}}\left [\Phi(X_T^0) + \epsilon \Phi'(X^0_T) X_T^1   \right ] =\\
&= Pr_{BS}+  \epsilon  e^{-rT}\mathbb{E}^{\mathbb{Q}} \left [\Phi'(X_T^0) X_T^1\right ] \, ,
\end{split}
\end{equation}
where  $Pr_{BS}$ stands for the standard B-S price with underlying $S_t^0 := e^{X_t^0}$, see, e.g \cite{BS}.

This formula yields thus, for a smooth payoff function, the corrected price up to the first order, with an error term related to the "full price" and bounded in modulus by $C_2 \epsilon^2$ for a constant $C_2 \geq 0$ independent of $\epsilon$.

\begin{remark}\label{REM:Smooth}
It is worth to recall that the payoff function usually fails to be smooth such as in the case of European call options where $\Phi(x) = (e^x -K)^+$, $K>0$ being the strike price. The latter payoff function presents namely a point of non differentiability at $e^{X}= K$. Anyhow we can consider a smoothed version of the payoff function, namely $\Phi_h := \Phi * \rho_h$, with $\rho_h$ some smooth kernel s.t. $\Phi_h \to \Phi$ as $h \to \infty$ in distributional sense. With the smoothed payoff function $\Phi_h$, eq. \eqref{EQN:ExpOptPrice} is well defined. In particular the first derivative appearing in eq. \eqref{EQN:ExpOptPrice} is given by a regularized version of $ \Indicator{[x> \ln K]}(x)$. Heuristically, interchanging the limits involved in the expansion with the removing of regularization we can look at $Pr^1(0,T)$ as given by \eqref{EQN:ExpOptPrice} also in the case of the payoff function $\Phi(x) = (e^x -K)^+$, $x \in \RR$, as approximation of the price, with $\Phi' (x )=\Indicator{[x > \ln K]}(x)$ given as above. In the case of smooth coefficients, using methods of \cite{Wat87}, the problem of handling distributional $\Phi$ can be handled efficiently, see, \cite{Tak12}.
\end{remark}

We have the following result.

\begin{proposition}\label{PRO:SecondCOrrectionExpDiff}
Let us consider the particular case of an European call option $\Phi$ with payoff given by $\Phi(X^\epsilon_T)= \max\{e^{X^\epsilon_t} - K,0\}=: \left (e^{X^\epsilon_t}- K\right )_+$, $K$ being the {\it strike price}. Then the approximated price up to the first order, $Pr^1(0;T)$, in the sense of remark \ref{REM:Smooth}, is explicitly given by
\begin{equation}\label{EQN:SecondCOrrectionExpDiff}
Pr^1(0;T) = P_{BS}+ \epsilon \mathcal{K}_1 s_0^{\alpha+1} I_1(s,T,\alpha) - \epsilon \mathcal{K}_2 s_0 N\left (d_1\right )+\epsilon \mathcal{K}_3 s_0^{\alpha+1} N\left (d(2\alpha+1)\right )\,  ,
\end{equation}
with $N(x)$ the cumulative function of the standard Gaussian distribution and

{\footnotesize
\[
\begin{split}
d(\alpha) &= \frac{1}{\sigma_0 \sqrt{T} } \left (log \frac{s_0}{K} +\left ( r- \frac{\sigma_0^2}{2}\alpha\right ) T\right )\, ,\quad d_1 := d(1) \, ,\quad d_2 := (d_1 + \sigma_0\sqrt{T})\, , \\
\mathcal{K}_1 &= K_\alpha e^{ - \frac{\sigma_0^2}{2}T} \,, \quad \mathcal{K}_2 =  \frac{\sigma}{\alpha \sigma_0} , \quad \mathcal{K}_3 = \frac{\sigma_1}{\alpha \sigma_0} e^{ \frac{\sigma_0^2}{2}T \alpha (\alpha+1) + \alpha r T} \, , \\
I_1(s,T,\alpha) & =  \int_0^T e^{\alpha \mu s}\int_{\R\times \R} \Indicator{\left \{x+y>\sqrt{T}d_2
\right\}}e^{\sigma_0 x} e^{(1+\alpha) \sigma_0y} \phi(x,0,T-s)\phi(y,0,s)dxdyds\, ,\\
\end{split}
\]
}

where we have denoted by $\phi(x;\mu,\sigma)$ the density function of the normal distribution with mean $\mu$ and variance $\sigma$, $P_{BS}$ denotes the usual B--S price with underlying $S^0_t = e^{X^0_t}$.
\end{proposition}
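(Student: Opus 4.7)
The plan is to start from the first-order correction formula~\eqref{EQN:ExpOptPrice},
\[
Pr^1(0;T) \;=\; P_{BS} \;+\; \epsilon\, e^{-rT}\,\mathbb{E}^{\mathbb{Q}}\!\left[\Phi'(X^0_T)\,X^1_T\right],
\]
and to evaluate the expectation on the right-hand side explicitly by substituting the closed-form representation of $X^1_T$ obtained in Proposition~\ref{PRO:ApproxBS3.2}. For the call payoff $\Phi(x)=(e^x-K)_+$ the derivative $\Phi'(x)$ will be read in the heuristic sense of Remark~\ref{REM:Smooth} as $\II_{\{x>\ln K\}}$; rigorously one would first mollify to $\Phi_h=\Phi*\rho_h$, perform the computation, and then send $h\to\infty$, relying on uniform Gaussian moment bounds on $X^0_T$ and on both summands of $X^1_T$ in order to interchange limit and expectation.

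Substituting the explicit expression $X^1_T = K_\alpha\int_0^T e^{\alpha X^0_s}\,ds + \frac{\sigma_1}{\alpha\sigma_0}\bigl(e^{\alpha X^0_T}-1\bigr)$ from~\eqref{EQN:FinalSystem1} splits the correction into three pieces. The constant term $-\frac{\sigma_1}{\alpha\sigma_0}$ contributes $-e^{-rT}\frac{\sigma_1}{\alpha\sigma_0}\,\mathbb{Q}(X^0_T>\ln K)$; since $X^0_T$ is Gaussian under the risk-neutral measure, this probability equals $N(d_1)$ by a direct computation and produces the term $-\epsilon\,\mathcal{K}_2 s_0\,N(d_1)$. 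The contribution of $\frac{\sigma_1}{\alpha\sigma_0}e^{\alpha X^0_T}$ is obtained by completing the square in the Gaussian exponent, equivalently an Esscher/Girsanov shift of the mean by $\alpha\sigma_0^2 T$; this collects a deterministic exponential prefactor of the form $s_0^{\alpha+1}\mathcal{K}_3$ together with a residual one-dimensional Gaussian probability matching $N(d(2\alpha+1))$.

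The remaining and most substantial piece, $K_\alpha e^{-rT}\mathbb{E}[\II_{\{X^0_T>\ln K\}}\int_0^T e^{\alpha X^0_s}\,ds]$, will be treated by exchanging expectation and time integral via the stochastic Fubini theorem (justified by uniform integrability of $e^{\alpha X^0_s}$ over $s\in[0,T]$) and then exploiting the independent-increment decomposition $X^0_T=X^0_s+\mu(T-s)+\sigma_0(W_T-W_s)$. Parametrising the two independent Brownian increments by $y$ and $x$ with densities $\phi(y,0,s)$ and $\phi(x,0,T-s)$, pulling the deterministic factor $s_0^{\alpha+1}e^{\alpha\mu s}$ outside the inner expectation, and rewriting the in-the-money event as a linear inequality in $x+y$ reproduces exactly the bivariate integral $I_1(s,T,\alpha)$ and the constant $\mathcal{K}_1$ of the statement. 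The main technical obstacle is not the Gaussian algebra but the very first step: the non-smoothness of $\Phi$ forces~\eqref{EQN:ExpOptPrice} to be read after mollification, and making this rigorous in full generality would require the Watanabe--Yoshida Malliavin framework alluded to in Remark~\ref{REM:Smooth}; once this is granted, collecting the three pieces yields~\eqref{EQN:SecondCOrrectionExpDiff}.
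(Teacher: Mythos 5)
Your overall route coincides with the paper's: both start from \eqref{EQN:ExpOptPrice}, insert the closed form of $X^1_T$ from Proposition \ref{PRO:ApproxBS3.2} so as to split the correction into three expectations, treat the time-integral piece by Fubini plus the independent-increment decomposition $W_T=(W_T-W_s)+W_s$, and reduce the two remaining pieces to one-dimensional Gaussian integrals by completing the square; the mollification caveat you raise is exactly the one the paper handles via Remark \ref{REM:Smooth}.

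There is, however, one concrete slip that propagates through your intermediate formulas: you declare $\Phi'(x)=\II_{\{x>\ln K\}}$, whereas the derivative of $(e^x-K)_+$ is $e^x\,\II_{\{x>\ln K\}}$, and the paper's proof works throughout with $\Phi'(X^0_T)=\II_{\{X^0_T>\ln K\}}e^{X^0_T}$. (Remark \ref{REM:Smooth} is itself loosely worded on this point, but the extra factor $e^{X^0_T}$ is essential.) As literally written, your first piece $-e^{-rT}\mathcal{K}_2\,\mathbb{Q}(X^0_T>\ln K)$ is a discounted probability and cannot equal $-\mathcal{K}_2\,s_0\,N(d_1)$; the factor $s_0$ and the cancellation of $e^{-rT}$ come precisely from $e^{-rT}\mathbb{E}^{\mathbb{Q}}\left[\II_{\{X^0_T>\ln K\}}e^{X^0_T}\right]=s_0N(d_1)$. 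Likewise, your Esscher shift ``by $\alpha\sigma_0^2 T$'' and the prefactor $s_0^{\alpha+1}$ are mutually inconsistent: with the bare indicator the exponent to complete is $\alpha X^0_T$, which yields $s_0^{\alpha}$ and a different argument of $N$, while the stated $s_0^{\alpha+1}$, the exponent $(1+\alpha)\sigma_0 y$ inside $I_1$, and the argument $d(2\alpha+1)$ all require the combined exponent $(1+\alpha)X^0_T$. Once you restore the factor $e^{X^0_T}$ in all three expectations, your computations become exactly those of the paper and the proof closes.
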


\begin{proof}
Given the exponential function $f(x) = e^{\alpha x}$, where $\alpha \in \R$, the approximated price up to the first order, $Pr^1(0; T)$ of an European call option with payoff function $\Phi({X_T^\epsilon}) = \left(e^{X_T^\epsilon}-K\right)_+$ is
\begin{equation}
\begin{split}
Pr^1(0; T) &= P_{BS} + \epsilon e^{-rT}\Expectrn{\Phi'(X_T^0)X_T^1} = \\ &=  P_{BS}+ \epsilon e^{-rT} \Biggl\{
\Expectrn{\Indicator{[X_0^T>\ln(K)]}e^{X_T^0} \int_0^T  K_\alpha e^{\alpha X^0_s}ds}+\\
&-\K{2}\Expectrn{\Indicator{[X_0^T>\ln(K)]}e^{X_T^0}}+\K{2}\Expectrn{\Indicator{[X_0^T>\ln(K)]}e^{X_T^0}e^{\alpha X^0_T}} \Biggl\}\label{sec:eq1} \;,
\end{split}
\end{equation}
where $P_{BS}$ is the standard B-S price with underlying $S^0_t = e^{X_t^0}$. 

Let us first compute the integral
\begin{equation*}
\epsilon e^{-rT} 
\Expectrn{\Indicator{[X_T^0>\ln(K)]}e^{X_T^0} \int_0^T  K_\alpha e^{\alpha X^0_s}ds}
\end{equation*}
By means of Fubini Theorem, we can exchange the expectation with respect to the integration in time so that we obtain
\begin{equation}
\epsilon e^{-rT} K_\alpha \int_0^T
\Expectrn{\Indicator{[X_t^0>\ln(K)]}e^{X_T^0} e^{\alpha X^0_s}}ds  \;.\label{EQN:Ch}
\end{equation}
From the definition of $X_T^0$ and $X_s^0$, for every fixed $0<s<T$, we have
\begin{align*}
X_T^0 &= x_0+\mu T +\sigma_0 W_T \;,\\
X_s^0 &= x_0 +\mu s +\sigma_0 W_s \;,
\end{align*}
are two correlated random variables, by means of the Wiener processes involved. By algebraic manipulation let us define $W_T = W_T-W_s+W_s$, where $X:=W_T-W_s$ is $\mathcal{N}(0,T-s)$ independent with respect to $W_s$. Then 
$X_T^0 =  x_0+\mu T +\sigma_0X+\sigma_0 W_s$ and \eqref{EQN:Ch} becomes

\begin{equation*}
\begin{split}
\epsilon e^{-rT} K_\alpha \int_0^T 
&\Expectrn{\Indicator{\left \{\sigma_0X+\sigma_0 W_s>\ln(\frac{K}{s_0})-\mu T\right\}}e^{(1+\alpha)x_0+\mu T}e^{\alpha \mu s} e^{\sigma_0 X} e^{(1+\alpha) \sigma_0 W_s}}ds  = \\
& = \epsilon e^{-rT} K_\alpha s_0^{(1+\alpha)} e^{rT}e^{-\frac{\sigma_0^2}{2}T}\times \\
&\times \int_0^T e^{\alpha \mu s}\Expectrn{\Indicator{\left \{\sigma_0X+\sigma_0 W_s>\ln(\frac{K}{s_0})-\mu T\right\}}e^{\sigma_0 X} e^{(1+\alpha) \sigma_0 W_s}}ds \;. 
\end{split}
\end{equation*}

The expectation with respect to the risk-neutral measure can be exchanged with the time integration. Moreover by exploiting the independence of $X$ and $W_s$, we get the final result

{\footnotesize
\begin{equation*}
\begin{split}
 &\epsilon K_\alpha s_0^{(1+\alpha)} e^{-\frac{\sigma_0^2}{2}T}\int_0^T e^{\alpha \mu s}\int_{\R\times \R} \Indicator{\left \{x+y>-\sqrt{T}d_2\right\}}e^{\sigma_0 x} e^{(1+\alpha) \sigma_0y} \phi(x,0,T-s)\phi(y,0,s)dxdyds = \\
&= \epsilon s_0^{(1+\alpha)}\K{1}\int_0^T e^{\alpha \mu s}\int_{\R\times \R} \Indicator{\left \{x+y>-\sqrt{T}d_2\right\}}e^{\sigma_0 x} e^{(1+\alpha) \sigma_0y} \phi(x,0,T-s)\phi(y,0,s)dxdyds\;. 
\end{split}
\end{equation*}
}

Then we have from the definition of $X^0_T$
\begin{equation}\label{sec:eq5}
\begin{split}
\mathbb{E}\left [ \Indicator{ [X_0^T>\ln(K)] } e^{ X_T^0 } \right ] &= \int_{x>-d_2} e^{x_0+\mu T+\sigma_0 \sqrt{T}x}\frac{1}{\sqrt{2\pi}}e^{\frac{-x^2}{2}}dx = \\
&= s_0e^{rT}e^{-\frac{\sigma_0^2}{2}T} \int_{x>-d_2} \frac{1}{\sqrt{2\pi}} e^{-\left( \frac{x}{\sqrt{2}}-\frac{\sigma_0 \sqrt{T}}{\sqrt{2}} \right)^2 }e^{\frac{\sigma_0^2T}{2}} dx = \\
&= s_0e^{rT}  \int_{x>-d_2} \frac{1}{\sqrt{2\pi}} e^{-\left(\frac{x}{\sqrt{2}}-\frac{\sigma_0 \sqrt{T}}{\sqrt{2}}\right)^2}dx \;. \\
\end{split}
\end{equation}
By setting $y=x-\sigma_0\sqrt{T}$, the integral in \eqref{sec:eq5} reads as
\begin{equation}
\mathbb{E}\left [ \Indicator{[X_0^T>\ln(K)]}e^{X_T^0} \right ] = s_0e^{rT}  \int_{y>-d_1} \frac{1}{\sqrt{2\pi}} e^{-\frac{y^2}{2}}dx = s_0e^{rT}N(d_1) \;.
\end{equation}
Eventually by multiplying by $-\epsilon e^{-rT}\K{2}$, we obtain
\begin{equation}
-\epsilon e^{-rT}\K{2} \mathbb{E}\left [ \Indicator{[X_0^T>\ln(K)]}e^{X_T^0}\right ] =  -\epsilon \K{2}s_0N(d_1)
\end{equation}

Let us now compute the last term in the bracket $\{ \quad \}$ in \eqref{sec:eq1}. We have

{\footnotesize
\begin{equation}
\begin{split}
\K{2} \mathbb{E}\left [ \Indicator{[X_0^T>\ln(K)]}e^{X_T^0}e^{\alpha X^0_T} \right ] &= \K{2} \int_{x_0+\mu T +\sigma_0\sqrt{T}x>\ln(K)} e^{(1+\alpha)\left(x_0+\mu T +\sigma_0\sqrt{T}x\right)}\frac{1}{\sqrt{2\pi}}e^{\frac{-x^2}{2}} dx  = \\
&= \K{2} \int_{x>-d_2} e^{(1+\alpha)(x_0+\mu T)} e^{(1+\alpha)\sigma_0\sqrt{T}x}\frac{1}{\sqrt{2\pi}}e^{\frac{-x^2}{2}} dx = \\
& = \K{2}s_0^{(1+\alpha)}e^{(1+\alpha)rT}e^{-(1+\alpha)\frac{\sigma_0^2}{2}T}\int_{x>-d_2} e^{(1+\alpha)\sigma_0\sqrt{T}x}\frac{1}{\sqrt{2\pi}}e^{\frac{-x^2}{2}} dx 
\label{sec:eq7b}
\end{split}
\end{equation}
}

The integrand function can be recast as
\begin{align*}
\frac{1}{\sqrt{2\pi}}e^{(1+\alpha)\sigma_0\sqrt{T}x}e^{\frac{-x^2}{2}} =
 \frac{1}{\sqrt{2\pi}}e^{- \left( \frac{x}{\sqrt{2}}-\frac{(1+\alpha)\sigma_0 \sqrt{T}}{\sqrt{2}}\right)^2} e^{\frac{\sigma_0^2}{2}(1+\alpha)^2T} \, .
\end{align*}
By the change of variable $ x \mapsto y = x -(1+\alpha)\sigma_0 \sqrt{T}$, the domain of integration becomes

\begin{equation*}
\begin{split}
y>-d_2 - (1+\alpha)\sigma_0 T & = -\frac{1}{\sigma_0\sqrt{T}}\left ( \ln\left(\frac{K}{s_0}\right) - r T+\sigma_0^2/2 -(1+\alpha)\sigma_0^2 T\right)=\\
& =-\frac{1}{\sigma_0\sqrt{T}}\left ( \ln\left(\frac{K}{s_0}\right) + r T+\frac{\sigma_0^2}{2} (2\alpha+1) T\right) = \\ &= -d(2\alpha+1) \;.
\end{split}
\end{equation*}
Therefore \eqref{sec:eq7b} becomes
\begin{equation*}
\begin{split}
&\K{2} \mathbb{E}\left [ \Indicator{[X_0^T>\ln(K)]}e^{X_T^0}e^{\alpha X^0_T} \right ] =\\
&= \K{2}s_0^{(1+\alpha)}e^{(1+\alpha)rT}e^{-(1+\alpha)\frac{\sigma_0^2}{2}T} e^{\frac{\sigma_0^2}{2}(1+\alpha)^2T}\int_{y>-d(2\alpha+1)} \frac{1}{\sqrt{2\pi}}e^{\frac{-y^2}{2}} dy = \\
& = \K{2}s_0^{(1+\alpha)}e^{(1+\alpha)rT}e^{\alpha(1+\alpha)\frac{\sigma_0^2}{2}T} N(d(2\alpha+1))
\end{split}
\end{equation*}
Eventually by multiplying by $\epsilon e^{-rT}$ we get
\begin{equation*}
\begin{split}
\K{2} \mathbb{E}\left [ \Indicator{[X_0^T>\ln(K)]}e^{X_T^0}e^{\alpha X^0_T} \right ] &= \K{2}s_0^{(1+\alpha)}e^{\alpha rT}e^{\alpha(1+\alpha)\frac{\sigma_0^2}{2}T} N(d(2\alpha+1)) =\\
&= \epsilon \K{3}s_0^{(1+\alpha)}N(d(2\alpha+1))
\end{split}
\end{equation*}

\end{proof}

By Prop. \ref{PRO:SecondCOrrectionExpDiff} we have that the explicit computation of the corrected fair price is reduced to a numerical evaluation of a deterministic integral, which might be more efficient than directly simulating the random variables involved.

\begin{remark}
Our result in Prop. \ref{PRO:SecondCOrrectionExpDiff} covers the case of a perturbation around the classical Black--Scholes model. This is different in this sense from the one discussed in \cite{Tak99}.
\end{remark}

\begin{remark}
We could have also considered the second order perturbation $Pr^2(0;T)$ around the BS price. This is given by
\[
Pr^2(0;T) = Pr^1(0;T) + \epsilon^2  e^{-rT}\mathbb{E}^{\mathbb{Q}} \left [\Phi(X_T^0)' X_T^2\right ]+ e^{-rT}\mathbb{E}^{\mathbb{Q}} \left [\Phi(X_T^0)'' \left (X_T^1\right )^2\right ]\, ,  
\]
with $Pr^1$ the up to first order price in eq. \eqref{EQN:ExpOptPrice}. For the particular case of a European call option we have that $\Phi'' = \delta(X- \log K)e^X + \Ind{X > \log K}e^X$, with $\delta$ the Dirac measure at the origin. Thus the correction up to the second order of the BS price for a European call option reads

\begin{equation}\label{EQN:SecondCOrrectionExpDiff2}
\begin{split}
Pr^2(0;T) &= Pr^1+ \epsilon^2 \mathcal{K}_4s_0^{2\alpha+1} I_1(s,T,2\alpha)+\\
&+ \epsilon^2  \mathcal{K}_5 s_0^{2\alpha+1}I_2(s,T) +\epsilon^2 \mathcal{K}_6 s_0^{\alpha+1} I_1(s,T,\alpha)+\\
&+ \epsilon^2 \mathcal{K}_7 s_0^{2\alpha+1} I_3(r,s,T) + \epsilon^2 \mathcal{K}_8 s_0^{2\alpha+1} N\left (d(-3-4\alpha)\right )+\\
&+\epsilon^2  \mathcal{K}_9 s_0^{\alpha+1} N\left (d(-1-2\alpha)\right )+\epsilon^2 \mathcal{K}_{10} s_0 N(d(1)) \, ,
\end{split}
\end{equation}
with $Pr^1$ as in eq. \eqref{EQN:SecondCOrrectionExpDiff}, the notations as in Prop. \ref{PRO:SecondCOrrectionExpDiff} and

\[
\begin{split}
\mathcal{K}_4 &= \left (C^1_\alpha + 2 K_\alpha \frac{\sigma_1}{\alpha \sigma_0}\right ) e^{- \frac{\sigma_0^2}{2}T}\,,\, \mathcal{K}_5  = C^2_\alpha e^{\alpha r T - \frac{\sigma_0^2}{2}(\alpha+1)T }\,,\, \\
\mathcal{K}_6 &= (C_\alpha^3 + 2 K_\alpha \frac{\sigma_1}{\alpha \sigma_0}) e^{- \frac{\sigma_0^2}{2}T }\,,\, \mathcal{K}_7 = (C_\alpha^4 + 2 K_\alpha^2) e^{ - \frac{\sigma_0^2}{2}T}\,,\, \\
\mathcal{K}_8 &= C^5_\alpha e^{ \frac{\sigma_0^2}{2}T \alpha (2 \alpha+1) + 2 \alpha r T}\,,\, \mathcal{K}_9 = (C^6_\alpha + \frac{\sigma_1}{\alpha \sigma_0}) e^{ \frac{\sigma_0^2}{2}T \alpha (\alpha+1) + \alpha r T}\,,\, \\ 
&\mathcal{K}_{10} = (C^7_\alpha - \frac{\sigma_1}{\alpha \sigma_0})\,,\,\\
I_2(s,T) &= \int_0^T \int_{\RR\times \RR} \Ind{x+y > -\sqrt{T}d(1)} e^{\alpha \mu s + (2\alpha+1) \sigma_0 y+ (\alpha+1)\sigma_0 x}\times \\
&\times \phi(x;y,T-s)\phi(y;0,s) \, dx\,dy\,ds\, ,\\
I_3(r,s,T) =&\int_0^T\int_0^s \int_{\RR\times \RR\times \RR}\Ind{x+y +z> -\sqrt{T}d(1)} e^{\alpha \mu (s+r) + \sigma_0 x+ (1+\alpha) \sigma_0 y + (1+ 2\alpha) \sigma_0 z} \times\\
& \times \phi(x;y,T-s)\phi(y;z,s-r)\phi(z;0,r) \, dx\,dy\, dz\,dr \, ds\, ,\\
\end{split}
\]

\end{remark}

\subsubsection{Numerical results concerning the pricing formula in Prop. \ref{PRO:SecondCOrrectionExpDiff}.}

We will now use the techniques based on the {\it multi-element} {\it Polynomial Chaos Expansion} (PCE) approach, 
 to show the accuracy of the above derived approximated pricing formula in Proposition \ref{PRO:SecondCOrrectionExpDiff}. 

In what follows we will numerically compute the first order correction of the price of an European call option, whose payoff function is $\left(e^{X_T^\epsilon}-K\right)_+$. In particular we focus our attention on the second summand of
\begin{equation}
Pr_1(0;T) = P_{BS}+\epsilon e^{-rT}\Expectrn{\Phi'(X_T^0)X^1_T} \;. \label{sec:eqExpect}
\end{equation}
Also, $X_T^0$ and $X^1_T$ are defined as in Prop. \ref{PRO:ApproxBS3.2}.

The expectation is computed by means of the standard Monte Carlo method, using $10000$ independent realization, and by mean of the multi-element PCE, see, e.g. \cite{BDPG,Crestaux,Ern,Peccati} and references therein, for a detailed introduction to such a method. Indeed, the random variable of interest is 
\begin{equation*}
\Indicator{\{X_0^T(\omega)>\ln(K)\}}\exp(X_T^0)X_T^1 \;.
\end{equation*} 
For both methods we will use the available analytical expression of $X_T^0$ and $X_T^1$, depending on the function $f(x)$. In what follows $D:=\{X_0^T(\omega)>\ln(K)\}$. 

In particular exploiting the linearity of the expectation and the definition of the two random variables involved, \eqref{sec:eqExpect} becomes
\begin{equation}
\Expectrn{\Indicator{D}e^{X_T^0} \int_0^T  K_\alpha e^{\alpha X^0_s}ds}+ K_2\Expectrn{\Indicator{D}e^{X_T^0}e^{\alpha X^0_T}}-K_2\Expectrn{\Indicator{D}e^{X_T^0}}\label{sec:eqExp2}
\end{equation}

Then we perform a {\it multi-element} PCE-approximation of each random variable in \eqref{sec:eqExp2}, setting the degree of the approximation to be $p=15$, 
since the degree of precision reached for such approximation seems to be sufficient. For higher degree the computational costs increase as well as numerical fluctuations, as witnessed exploiting the {\it Non Intrusive Spectral Projection} (NISP) toolbox developed within the {\it Scilab} open source software for mathematics and engineering sciences, becomes relevant for multi-element approximation. It is worth to mention that  {\it multi-element} PCE is nothing else that  a PCE focused on $D$. Moreover the global statistics are given by $D$, scaled by means of the weight $w$.

The numerical values of the parameters are gathered in Table \ref{tab:tab1}.

%

\begin{table}[!h]
\centering
\begin{tabularx}{\textwidth}{c|cccccccc}
Parameters &$\alpha$& r& $\sigma_1$ &K &T \\
\hline
Values & 0.1 & 0.03 & 0.15 & 100 & 0.5\\
\end{tabularx}
\caption{Numerical values of the parameters employed in further computations} \label{tab:tab1}
\end{table}  

The fair price is numerically determined for the set of spot prices $s_0 \in \{90,100,110\}$ and volatility value $\sigma_0 \in \{15\%,25\%,35\%\}$.

The PCE computation will be compared with standard Monte-Carlo simulation for the integrals and expansions in \eqref{sec:eqExp2}. The number of independent realizations is set as $10000$. Moreover as benchmark we use the results presented in Proposition 3.1. These data are collected in Tables \ref{tab:tab2}, \ref{tab:tab3}, \ref{tab:tab4}.

%
%

\begin{table}[!h]
\centering
\begin{tabular}{|c|c|ccc|ccc|}
\hline
 &&\multicolumn{3}{c}{$\epsilon = 0.1$}  & \multicolumn{3}{|c|}{$\epsilon = 0.01$} \\ 
\hline
& & Analytical & PCE & standard MC &Analytical & PCE & standard MC \\[1pt]
\hline 
\multirow{2}{*}{$\sigma_0 = 15 \%$} &
 Results & 12.38180 & 12.37737 &12.36010 &2.22240 & 2.22195 & 2.23204\\
 & Error &&4.4374e-03 & 2.3950e-01 && 4.4374e-04 &2.3995e-02 \\ 
  & Time &&0.0580 & 0.3200 && 0.0530 &0.2890 \\ 
\hline
\multirow{2}{*}{$\sigma_0 = 25 \%$} &
 Results & 14.09613 & 14.08919 &14.14155 &4.31567 & 4.31498 & 4.28696\\ 
 & Error &&6.9451e-03 & 1.7882e-01 && 6.9451e-04 &1.7755e-02 \\ 
  & Time &&0.0530 & 0.3060 && 0.0690 &0.4130 \\ 
\hline 
\multirow{2}{*}{$\sigma_0 = 35 \%$} &
 Results & 15.08779 & 15.07774 &15.30850 &6.58042 & 6.57941 & 6.57030\\ 
 & Error &&1.0044e-02 & 1.4500e-01 && 1.0044e-03 &1.4255e-02 \\ 
  & Time &&0.0690 & 0.3460 && 0.0630 &0.3420 \\

 \hline
\end{tabular}
\caption{Numerical values for PCE and MC estimation of eq. \eqref{sec:eqExp2}, for $s_0 =90$, $\alpha = 0.1$, $\sigma_1 = 0.15$, $r=0.03$ and $T=0.5$.} \label{tab:tab2}
\end{table}

\begin{table}[!h]
\centering
\begin{tabular}{|c|c|ccc|ccc|}
\hline
 &&\multicolumn{3}{c}{$\epsilon = 0.1$}  & \multicolumn{3}{|c|}{$\epsilon = 0.01$} \\ 
\hline
& & Analytical & PCE & standard MC &Analytical & PCE & standard MC \\[1pt]
\hline 
\multirow{2}{*}{$\sigma_0 = 15 \%$} &
 Results & 39.39600 & 39.38877 &38.97870 &8.42541 & 8.42468 & 8.46801\\ 
 & Error &&7.2374e-03 & 3.2398e-01 && 7.2374e-04 &3.2376e-02 \\ 
  & Time &&0.0610 & 0.3180 && 0.3160 &0.3260 \\ 
 \hline
\multirow{2}{*}{$\sigma_0 = 25 \%$} &
 Results & 28.38116 & 28.37206 &28.57793 &9.82235 & 9.82144 & 9.82024\\ 
 & Error &&9.0927e-03 & 2.1197e-01 && 9.0927e-04 &2.1097e-02 \\ 
  & Time &&0.0520 & 0.3000 && 0.0590 &0.2860 \\ 
\hline
\multirow{2}{*}{$\sigma_0 = 35 \%$} &
 Results & 25.56320 & 25.55082 &25.60074 &12.03973 & 12.03850 & 12.03580\\ 
 & Error &&1.2374e-02 & 1.6429e-01 && 1.2374e-03 &1.6466e-02 \\ 
  & Time &&0.0530 & 0.3190 && 0.0550 &0.2940 \\

  \hline
\end{tabular}
\caption{Numerical values for PCE and MC estimation of eq. \eqref{sec:eqExp2}, for $s_0 = 100$, $\sigma_1 = 0.15$, $r=0.03$ and $T=0.5$.} \label{tab:tab3}
\end{table}

\begin{table}[!h]
\centering
\begin{tabular}{|c|c|ccc|ccc|}
\hline
 &&\multicolumn{3}{c}{$\epsilon = 0.1$}  & \multicolumn{3}{|c|}{$\epsilon = 0.01$} \\ 
\hline
& & Analytical & PCE & standard MC &Analytical & PCE & standard MC \\[1pt]
\hline 
\multirow{2}{*}{$\sigma_0 = 15 \%$} &
 Results & 69.70042 & 69.69460 &69.68928 &18.07600 & 18.07542 & 18.08538\\ 
 & Error &&5.8153e-03 & 2.6109e-01 && 5.8153e-04 &2.5932e-02 \\ 
  & Time &&0.0560 & 0.4000 && 0.0700 &0.3330 \\ 
\hline
\multirow{2}{*}{$\sigma_0 = 25 \%$} &
 Results & 45.21665 & 45.20739 &45.04317 &17.54193 & 17.54100 & 17.52920\\ 
 & Error &&9.2595e-03 & 2.0951e-01 && 9.2595e-04 &2.0818e-02 \\ 
  & Time &&0.0690 & 0.3460 && 0.0550 &0.3120 \\ 
\hline
\multirow{2}{*}{$\sigma_0 = 35 \%$} &
 Results & 37.87932 & 37.86590 &37.32253 &19.09570 & 19.09436 & 19.11644\\ 
 & Error &&1.3416e-02 & 1.7161e-01 && 1.3416e-03 &1.7169e-02 \\ 
  & Time &&0.0530 & 0.3510 && 0.0660 &0.3240 \\ 
   \hline
\end{tabular}
\caption{Numerical values for PCE and MC estimation of equation 25, for $s_0 = 110$, $\sigma_1 = 0.15$, $r=0.03$ and $T=0.5$.} \label{tab:tab4}
\end{table}

%

\subsection{A correction given by an exponential function and jumps}\label{SEC:JumpExpCorr}

In what follows we extend the results in Sec. \ref{SEC:ExpCorreNoJ} to the second model in Sec. \ref{optionpriceapproximation}. In particular we will consider a correction up to the first order around the BS price (for a European call option) where both diffusive and jump perturbations are taken into account. We consider an asset whose return evolves according to eq. \eqref{EQN:NormRetuLevy} and consider as before the particular case where $f(x) = e^{\alpha x}$, $\alpha \in \RR_0$. Carrying out the asymptotic expansion in powers of $\epsilon$, $\Asi$, and stopping it at the second order we get the following proposition:

\begin{proposition}\label{PRO:PropJump}
Let us assume $X^\epsilon_t$ evolves according to eq. \eqref{EQN:NormRetuLevy} with $f(x) = e^{\alpha x}$, for some $\alpha \in \RR$, then we have the asymptotic expansion up to the second order in powers of $\epsilon$, $\Asi$, $X^\epsilon_t = X^0_t + \epsilon X^1_t + \epsilon^2 X^2_t + R_2(\epsilon,t)$, where the coefficients are given by

\begin{equation}\label{EQN:SysBSPoiDiff}
\begin{split}
X_t^0 &= x_0+  \mu t +\sigma_0  W_t, \quad \mbox{ with law }\quad \mathcal{N}\left (x_0+\mu t,\sigma_0^2 t\right );\\
X_t^1 &= \int_0^t   K_\alpha e^{\alpha X_s^{0} }ds+ \frac{\sigma_1}{\alpha \sigma_0}\left (e^{\alpha X^0_t}-1\right ) +\lambda t \left (e^{\gamma + \frac{\delta^2}{2}}-1\right )+  \sum_{i=1}^{N_t} J_i \,;\, \\
X_t^2 =& C^1_\alpha \int_0^t e^{2 \alpha X^0_s} ds +  C^2_\alpha e^{\alpha X_t^0}\int_0^t e^{\alpha X^0_s} ds +\\
&+ C^3_\alpha \int_0^t e^{\alpha X^0_s} ds + C^4_\alpha \int_0^t e^{\alpha X^0_s}\int_0^s e^{\alpha X^0_r}dr ds \\
&+ C^5_\alpha e^{2 \alpha X^0_t} +  C^6_\alpha e^{\alpha X^0_t} +C^7_\alpha + C^8_\alpha\lambda \left (e^{\gamma + \frac{\delta^2}{2}}-1\right ) \nu(dx)\int_0^t s e^{\alpha X^0_s} ds \\
&-t e^{\alpha X^0_t} \lambda \left (e^{\gamma + \frac{\delta^2}{2}}-1\right )+ \frac{\sigma_1}{\sigma_0} \lambda \left (e^{\gamma + \frac{\delta^2}{2}}-1\right )\int_0^t e^{\alpha X_s^0}ds \\
&+ C_9^\alpha \int_0^t \sum_{i=1}^{N_s} J_i e^{\alpha X^0_s} ds+ \frac{\sigma_1}{\sigma_0} e^{\alpha X_t^0} \sum_{i=1}^{N_t} J_i - \frac{\sigma_1}{\sigma_0} \sum_{i=1}^{N_t} J_i \int_0^t e^{\alpha X_s^0} ds\, ,
\end{split}
\end{equation}
with the constants as in Prop. \ref{PRO:ApproxBS3.2} and 
\[
C^8_\alpha = \frac{\sigma_1}{\sigma_0} \alpha \mu + \frac{\sigma_0 \sigma_1}{2} \alpha^2 - 2 \sigma_0 \sigma_1 \alpha, \quad C^9_\alpha = 2 \sigma_0 \sigma_1 \alpha -\frac{\sigma_1}{\sigma_0} \alpha \mu - \frac{\sigma_0 \sigma_1}{2} \alpha^2 \, . 
\]

\end{proposition}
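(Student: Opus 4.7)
The strategy is to follow the same scheme as in Proposition \ref{PRO:ApproxBS3.2}, but now tracking the jump contributions carefully through each order of $\epsilon$. First I would apply the general machinery of Proposition \ref{PRO:ExpansionCoef} to equation \eqref{EQN:NormRetuLevy} in order to read off the recursive system for $X^0_t, X^1_t, X^2_t$. Since the jump term $\epsilon \sum_{i=1}^{N_t} J_i$ and its compensator $\epsilon\lambda t(e^{\gamma+\delta^2/2}-1)$ are linear in $\epsilon$ and do not depend on $x$, they enter only additively, appearing at first order as the new terms $\lambda t(e^{\gamma+\delta^2/2}-1)+\sum_{i=1}^{N_t} J_i$ in $X^1_t$ and leaving $X^0_t$ unchanged (matching exactly the first two lines of \eqref{EQN:SysBSPoiDiff}).

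The heart of the computation is $X^2_t$. As in the diffusive case, Proposition \ref{PRO:ExpansionCoef} gives
\[
dX^2_t = -\Bigl(\tfrac{\sigma_1^2}{2}e^{2\alpha X^0_t}+2\sigma_0\sigma_1\alpha e^{\alpha X^0_t}X^1_t\Bigr)dt+\sigma_1\alpha e^{\alpha X^0_t}X^1_t\,dW_t,
\]
but now $X^1_t$ contains the extra jump/compensator piece $Y_t:=\lambda t(e^{\gamma+\delta^2/2}-1)+\sum_{i=1}^{N_t}J_i$ on top of the quantity already analysed in Proposition \ref{PRO:ApproxBS3.2}. The plan is therefore to split $X^1_t=X^{1,\mathrm{diff}}_t+Y_t$, invoke the diffusive proof verbatim for the part generated by $X^{1,\mathrm{diff}}_t$ (producing the constants $C^1_\alpha,\dots,C^7_\alpha$ and the integrals in \eqref{EQN:FinalSystem1}), and then treat the extra terms
\[
-2\sigma_0\sigma_1\alpha\int_0^t e^{\alpha X^0_s}Y_s\,ds+\sigma_1\alpha\int_0^t e^{\alpha X^0_s}Y_s\,dW_s
\]
separately. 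Since $Y_s$ is a pure jump/deterministic contribution independent of $W$, the $dW$-integrals can be handled by the same two tricks as before: apply It\^o's formula to $e^{\alpha X^0_t}$ (equation \eqref{EQN:ItoExp1}) to replace $\int_0^t\alpha\sigma_0 e^{\alpha X^0_s}dW_s$ by an explicit function plus a Lebesgue integral, and use the stochastic Fubini theorem to interchange the order of the time integrations needed when $Y_s$ contains a factor $s$ or $\sum_{i\le N_s}J_i$.

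The deterministic compensator $\lambda s(e^{\gamma+\delta^2/2}-1)$ produces precisely the integral $\int_0^t s\,e^{\alpha X^0_s}ds$ and, after applying It\^o to $s\,e^{\alpha X^0_s}$, the boundary term $-t\,e^{\alpha X^0_t}\lambda(e^{\gamma+\delta^2/2}-1)$ together with the coefficient $C^8_\alpha$; similarly the jump part $\sum_{i\le N_s}J_i$ produces the integral with $C^9_\alpha$ together with the boundary contributions $\tfrac{\sigma_1}{\sigma_0}e^{\alpha X^0_t}\sum_{i\le N_t}J_i-\tfrac{\sigma_1}{\sigma_0}\sum_{i\le N_t}J_i\int_0^t e^{\alpha X^0_s}ds$. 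Finally, the estimate on $R_2(\epsilon,t)$ follows directly from Theorem \ref{THM:AsChar} applied to the jump-diffusion setting.

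The main obstacle I anticipate is the jump/Brownian cross term $\int_0^t\alpha\sigma_1 e^{\alpha X^0_s}\sum_{i=1}^{N_s}J_i\,dW_s$: since the integrand is adapted but not of bounded variation in $s$, I must be careful to first use stochastic Fubini to interchange $ds$ and $dW_s$, then apply It\^o to $e^{\alpha X^0_s}$ to eliminate the remaining Brownian integral. Verifying that the boundary terms from this interchange combine correctly to give the two $\tfrac{\sigma_1}{\sigma_0}$-coefficients in the last line of \eqref{EQN:SysBSPoiDiff}, and that the residual Lebesgue integral produces exactly the $C^9_\alpha$-term (with the sign and constant dictated by $\alpha\mu+\tfrac{\alpha^2\sigma_0^2}{2}-2\sigma_0\sigma_1\alpha$), is the most error-prone bookkeeping step, but it is a routine consequence of the independence of the Poisson component from the Brownian filtration increments over the intervals involved.
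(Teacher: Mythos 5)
Your plan coincides with the paper's own (very terse) proof, which simply reduces to Proposition \ref{PRO:ApproxBS3.2} and notes that the additional Poisson terms are handled by It\^{o}'s lemma together with the stochastic Fubini theorem --- exactly the decomposition $X^1_t = X^{1,\mathrm{diff}}_t + Y_t$ and the subsequent elimination of the $dW$-integrals that you describe. Your version is in fact more detailed than what the paper records, and the cross-term you flag is handled by precisely the Fubini-then-It\^{o} interchange you propose.
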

\begin{proof}
The proof follows from Prop. \ref{PRO:ApproxBS3.2} just taking into account the presence of the Poisson random measure terms and applying It\^{o}'s lemma, together with the stochastic Fubini theorem.
\end{proof}

\begin{remark}
As mentioned in remark \ref{REM:Four} it is easy to extend Prop. \ref{PRO:PropJump} and formula \eqref{EQN:ExpOptPrice} to the case where $f(x)=e^{\alpha x}$ is replaced by $\int_{\RR_0}e^{i \alpha x} \varrho(d\alpha)$, resp. $\int_{\RR_0} e^{\alpha x} \varrho(d\alpha)$, with assumptions corresponding to those in remark \ref{REM:Four}.
\end{remark}

\begin{proposition}\label{PRO:PriceExpSalti}
Let us consider the model described by \eqref{EQN:NormRetuLevy} in the particular case of an European call option $\Phi$ with payoff given by $\Phi(X^\epsilon_T)= \left (e^{X^\epsilon_t}- K\right )_+$. Then the approximated price up to the first order $Pr^1_\nu(0;T)$, in the sense explained in remark \ref{REM:Smooth}, is explicitly given by
\[
\begin{split}
Pr^1_\nu(0;T) = Pr^1+ \epsilon Ts_0 N\left (d_1\right ) \left (e^{\gamma + \frac{\delta^2}{2}}-1\right ) + \epsilon Ts_0 N\left (d_1\right ) \delta \lambda\, ,
\end{split}
\]
where $Pr^1$ is the corrected fair price up to the first order as given in eq. \eqref{EQN:SecondCOrrectionExpDiff} (the notations are as Prop. \ref{PRO:SecondCOrrectionExpDiff}).
\end{proposition}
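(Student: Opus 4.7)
The plan is to reduce to the computation already carried out in Proposition \ref{PRO:SecondCOrrectionExpDiff} and then to evaluate only the two extra contributions coming from the jump terms in $X^1_T$. Starting from the general expansion formula for the option price, we write
\[
Pr^1_\nu(0;T) = e^{-rT}\mathbb{E}^{\mathbb{Q}}\left[\Phi(X^0_T)\right] + \epsilon e^{-rT}\mathbb{E}^{\mathbb{Q}}\left[\Phi'(X^0_T) X^1_T\right],
\]
where, as in Remark \ref{REM:Smooth}, $\Phi'(X^0_T)$ is interpreted via the smoothing approximation, so that heuristically $\Phi'(X^0_T) = \mathbbm{1}_{\{X^0_T > \ln K\}}e^{X^0_T}$. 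The key observation is that the zero-order coefficient $X^0_t = x_0+\mu t +\sigma_0 W_t$ in Prop. \ref{PRO:PropJump} has the same law as in the purely diffusive model of Prop. \ref{PRO:ApproxBS3.2}, so the leading term is again the Black--Scholes price $P_{BS}$.

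Next I would split $X^1_T$ according to Prop. \ref{PRO:PropJump} into a diffusive block,
\[
X^{1,\mathrm{diff}}_T := \int_0^T K_\alpha e^{\alpha X^0_s}\,ds + \frac{\sigma_1}{\alpha\sigma_0}\bigl(e^{\alpha X^0_T}-1\bigr),
\]
and a jump block $X^{1,\mathrm{jump}}_T := \lambda T\bigl(e^{\gamma+\delta^2/2}-1\bigr) + \sum_{i=1}^{N_T} J_i$. By linearity of the expectation, the contribution from $X^{1,\mathrm{diff}}_T$ is identical term by term to the one already computed in the proof of Prop. \ref{PRO:SecondCOrrectionExpDiff}; hence it yields exactly the correction $Pr^1 - P_{BS}$ appearing in eq. \eqref{EQN:SecondCOrrectionExpDiff}. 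Adding back $P_{BS}$ gives $Pr^1$.

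It remains to handle the two pieces of $X^{1,\mathrm{jump}}_T$. For the deterministic drift $\lambda T(e^{\gamma+\delta^2/2}-1)$, I would pull the constant out of the expectation and use the identity
\[
\mathbb{E}^{\mathbb{Q}}\left[\mathbbm{1}_{\{X^0_T>\ln K\}}e^{X^0_T}\right] = s_0 e^{rT} N(d_1),
\]
which is exactly the computation \eqref{sec:eq5} carried out already; multiplying by $\epsilon e^{-rT}$ gives the term $\epsilon T s_0 N(d_1)(e^{\gamma+\delta^2/2}-1)$. For the compound Poisson term I would use the crucial independence property: the Brownian motion $W$ driving $X^0_T$ is independent of the compound Poisson process $\sum_{i=1}^{N_T} J_i$ (the $N_t$ and $J_i$'s are defined on a factor independent of $W$). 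This independence is the only nontrivial ingredient and is where I expect to spend the most care; once it is in place, the expectation factorizes as
\[
\mathbb{E}^{\mathbb{Q}}\left[\mathbbm{1}_{\{X^0_T>\ln K\}}e^{X^0_T}\sum_{i=1}^{N_T} J_i\right] = s_0 e^{rT} N(d_1)\cdot \mathbb{E}^{\mathbb{Q}}\left[\sum_{i=1}^{N_T} J_i\right],
\]
and Wald's identity for the compound Poisson gives $\mathbb{E}^{\mathbb{Q}}[\sum_{i=1}^{N_T} J_i] = \lambda T\,\mathbb{E}[J_1]$. Multiplying by $\epsilon e^{-rT}$ and summing the three pieces yields the stated formula. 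The main subtlety is therefore the justification of the independence factorization together with the smoothing argument of Remark \ref{REM:Smooth} (which, as already noted, can be made rigorous by the Watanabe-type techniques of \cite{Wat87,Tak12}); the remaining steps are routine applications of Fubini and of the Gaussian computation in \eqref{sec:eq5}.
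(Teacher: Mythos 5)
Your proposal follows essentially the same route as the paper's own (very brief) proof: reduce the diffusive block of $X^1_T$ to the computation already done in Prop.~\ref{PRO:SecondCOrrectionExpDiff}, and evaluate the two extra jump contributions via the independence of $W$ and the compound Poisson process together with the Gaussian identity $\mathbb{E}^{\mathbb{Q}}\left[\mathbbm{1}_{\{X^0_T>\ln K\}}e^{X^0_T}\right]=s_0e^{rT}N(d_1)$. One caveat: Wald's identity as you invoke it gives $\mathbb{E}^{\mathbb{Q}}\left[\sum_{i=1}^{N_T}J_i\right]=\lambda T\,\mathbb{E}[J_1]=\lambda T\gamma$ since $J_1\sim\mathcal{N}(\gamma,\delta^2)$, so your computation actually yields $\epsilon Ts_0N(d_1)\gamma\lambda$ rather than the stated $\epsilon Ts_0N(d_1)\delta\lambda$; this mismatch originates in the paper itself (which asserts $\mathbb{E}\left[\sum_{i=1}^{N_t}J_i\right]=\delta T\lambda$), but your closing claim that the stated formula follows verbatim should be qualified accordingly.
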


\begin{proof}
The proof is analogous of the proof of Prop. \ref{PRO:SecondCOrrectionExpDiff} adding the jump process. The claim follows then from the independence of the jump process and of the Brownian motion together with the fact that $\E{\sum_{i=1}^{N_t} J_i}=\delta T \lambda$ as consequence of the definition of $J_i$ in Section \ref{SEC:ExpCorreNoJ}.
\end{proof}

\subsubsection{Numerical results concerning the pricing formula in Prop. \ref{PRO:PropJump}}

We consider numerically the model discussed in Prop. \ref{PRO:PriceExpSalti}, assuming that the $J_i$ are independent and  normally distributed random variable
\begin{equation*}
J_i \sim \mathcal{N}(\gamma,\delta^2) \qquad \gamma = 0.05, \qquad \delta = 0.02 \;,
\end{equation*}
and $\lambda = 2 $. In particular we are aiming at numerically computing the expectations in the second summand of \eqref{sec:eqExpect}, which in the present case reads
\begin{equation}
\begin{split}
&\Expectrn{\Indicator{D}e^{X_T^0} \int_0^T  K_\alpha e^{\alpha X^0_s}ds}+ K_2\Expectrn{\Indicator{D}e^{X_T^0}e^{\alpha X^0_T}}-K_2\Expectrn{\Indicator{D}e^{X_T^0}}\\ +&K_2\Expectrn{\Indicator{D}e^{X_T^0}\lambda T \left( e^{\gamma+\frac{\delta^2}{2}}-1 \right)}+K_2\Expectrn{\Indicator{D}e^{X_T^0}\sum_{i=1}^{N_T}J_i}\;.\label{sec:eqExp3.1}
\end{split}
\end{equation}
By means of independence of the jumps and $\EE{\sum_{i=1}^{N_T}J_i}= \lambda T \delta$, we get
\begin{equation}
\begin{split}
&\Expectrn{\Indicator{D}e^{X_T^0} \int_0^T  K_\alpha e^{\alpha X^0_s}ds}+ K_2\Expectrn{\Indicator{D}e^{X_T^0}e^{\alpha X^0_T}}-K_2\Expectrn{\Indicator{D}e^{X_T^0}}\\ &K_2\Expectrn{\Indicator{D}e^{X_T^0}\lambda T \left( e^{\gamma+\frac{\delta^2}{2}}-1 \right)}+K_2 \lambda T \delta \Expectrn{\Indicator{D}e^{X_T^0}}\;.\label{sec:eqExp3}
\end{split}
\end{equation}
We are going to compute \eqref{sec:eqExp3} by {\it multi-element} PCE-approximations.

The other parameters entering the model are taken from Table \ref{tab:tab1} and the three spot price considered are $s_0 \in \{90,100,110\}$. The results are presented in Tables \ref{tab:tab5}, \ref{tab:tab6}, \ref{tab:tab7}.

\begin{table}[!h]
\centering
\begin{tabular}{|c|c|ccc|ccc|}
\hline
 &&\multicolumn{3}{c}{$\epsilon = 0.1$}  & \multicolumn{3}{|c|}{$\epsilon = 0.01$} \\ 
\hline
& & Analytical & PCE & standard MC &Analytical & PCE & standard MC \\[1pt]

\hline 
\multirow{2}{*}{$\sigma_0 = 15 \%$} &
 Results & 12.51812 & 12.51387 &12.56922 &2.23603 & 2.23560 & 2.21394\\ 
 & Error &&4.2567e-03 & 2.4285e-01 && 4.2567e-04 &2.4069e-02 \\ 
  & Time &&0.0830 & 0.5250 && 0.0860 &0.5160 \\ 
\hline
\multirow{2}{*}{$\sigma_0 = 25 \%$} &
 Results & 14.31171 & 14.30550 &14.15258 &4.33723 & 4.33661 & 4.34650\\ 
 & Error &&6.2145e-03 & 1.8182e-01 && 6.2145e-04 &1.8295e-02 \\ 
  & Time &&0.0850 & 0.5050 && 0.0880 &0.5190 \\ 
\hline
\multirow{2}{*}{$\sigma_0 = 35 \%$} &
 Results & 15.34622 & 15.33806 &15.39336 &6.60626 & 6.60544 & 6.61219\\ 
 & Error &&8.1646e-03 & 1.4758e-01 && 8.1646e-04 &1.4754e-02 \\ 
  & Time &&0.0880 & 0.5630 && 0.0990 &0.5080 \\

 \hline
\end{tabular}
\caption{Numerical values for PCE and MC estimation of equation 25, for $s_0 = 90$, $\alpha = 0.1$,  $\sigma_1 = 0.15$, $r=0.03$, $\lambda = 2$, $\gamma = 0.05$, $\delta = 0.02$ and $T=0.5$.} \label{tab:tab5}

\end{table}

\begin{table}[!h]
\centering
\begin{tabular}{|c|c|ccc|ccc|}
\hline
 &&\multicolumn{3}{c}{$\epsilon = 0.1$}  & \multicolumn{3}{|c|}{$\epsilon = 0.01$} \\ 
\hline
& & Analytical & PCE & standard MC &Analytical & PCE & standard MC \\[1pt]
\hline 
\multirow{2}{*}{$\sigma_0 = 15 \%$} &
 Results & 39.80797 & 39.80128 &39.84062 &8.46660 & 8.46593 & 8.48244\\ 
 & Error &&6.6879e-03 & 3.2715e-01 && 6.6879e-04 &3.2848e-02 \\ 
  & Time &&0.0870 & 0.5490 && 0.0860 &0.5270 \\ 
\hline
\multirow{2}{*}{$\sigma_0 = 25 \%$} &
 Results & 28.78634 & 28.77863 &28.54522 &9.86287 & 9.86209 & 9.86723\\ 
 & Error &&7.7114e-03 & 2.1566e-01 && 7.7114e-04 &2.1567e-02 \\ 
  & Time &&0.0900 & 0.5370 && 0.0910 &0.6180 \\ 
\hline
\multirow{2}{*}{$\sigma_0 = 35 \%$} &
 Results & 25.96991 & 25.96051 &26.21060 &12.08041 & 12.07947 & 12.05190\\ 
 & Error &&9.3989e-03 & 1.6859e-01 && 9.3989e-04 &1.6726e-02 \\ 
  & Time &&0.1070 & 0.5240 && 0.0920 &0.5180 \\

\hline
\end{tabular}
\caption{Numerical values for PCE and MC estimation of equation 25, for $s_0 = 100$, $\alpha = 0.1$,  $\sigma_1 = 0.15$, $r=0.03$, $\lambda = 2$, $\gamma = 0.05$, $\delta = 0.02$ and $T=0.5$.} \label{tab:tab6}

\end{table}

\begin{table}[!h]
\centering
\begin{tabular}{|c|c|ccc|ccc|}
\hline
 &&\multicolumn{3}{c}{$\epsilon = 0.1$}  & \multicolumn{3}{|c|}{$\epsilon = 0.01$} \\ 
\hline
& & Analytical & PCE & standard MC &Analytical & PCE & standard MC \\[1pt]
\hline 
\multirow{2}{*}{$\sigma_0 = 15 \%$} &
 Results & 70.37793 & 70.37303 &70.57558 &18.14375 & 18.14326 & 18.20012\\ 
 & Error &&4.9058e-03 & 2.6286e-01 && 4.9058e-04 &2.6111e-02 \\ 
  & Time &&0.0850 & 0.4990 && 0.0850 &0.5210 \\ 
\hline
\multirow{2}{*}{$\sigma_0 = 25 \%$} &
 Results & 45.81367 & 45.80646 &45.98197 &17.60163 & 17.60091 & 17.59774\\ 
 & Error &&7.2116e-03 & 2.1243e-01 && 7.2116e-04 &2.1281e-02 \\ 
  & Time &&0.0950 & 0.5270 && 0.0910 &0.5180 \\ 
\hline
\multirow{2}{*}{$\sigma_0 = 35 \%$} &
 Results & 38.43779 & 38.42848 &38.04608 &19.15155 & 19.15062 & 19.13383\\ 
 & Error &&9.3058e-03 & 1.7688e-01 && 9.3058e-04 &1.7692e-02 \\ 
  & Time &&0.0910 & 0.5370 && 0.0990 &0.5430 \\

 \hline
\end{tabular}
\caption{Numerical values for PCE and MC estimation of equation 25, for $s_0 = 110$, $\alpha = 0.1$,  $\sigma_0 = 0.15$, $r=0.03$, $\lambda = 2$, $\gamma = 0.05$, $\delta = 0.02$ and $T=0.5$.} \label{tab:tab7}

\end{table}

\subsection{A correction given by a polynomial function}\label{SEC:PolCorr}

Let us consider eq. \eqref{EQN:NormRetuGenBrown} with $f$ a polynomial correction, namely $f(x) = \sum_{i=0}^N \alpha_i x^i$, with $\alpha_i \in \RR$ and $N \in \N_0$. We then get the following proposition.

\begin{proposition}\label{PRO:ApproxBS}
Let us consider the case of the B-S model corrected by a non-linear term given by \eqref{EQN:NormRetuGenBrown} with $f(x) = \sum_{i=0}^N \alpha_i x^i$, for some $\alpha_i \in \RR$, then the expansion coefficients for the solution $X^\epsilon_t$ of \eqref{EQN:NormRetuGenBrown} up to the second order are given by the system
\begin{equation}\label{EQN:32A}
\begin{split}
X_t^0 &= x_0+ \mu t +\sigma_0  W_t, \quad \mbox{ with law }\quad \mathcal{N}\left (x_0+\mu t,\sigma_0^2 t\right );\\
X_t^1 & = \sum_{i=1}^N \tilde{K}_i (X^0_t)^{i+1}- \sum_{i=0}^N \int_0^t   K_i (X^0_s)^i ds + \sigma_1 \alpha_0 W_t; \\
X_t^2 =& \sum_{k=1}^{2N+1}C^1_k (X^0_t)^k - \sum_{k=1}^{2N+1}\int_0^t C^2_k (X^0_s)^k ds+\\
&+ \sum_{i=1}^N \sum_{j=0}^N \int_0^t\int_0^s C_{i,j}^3 (X^0_s)^{i-1}(X_r^0)^j dr ds\\
&+ \sum_{i=1}^N \sum_{j=0}^N (X^0_t)^i\int_0^s C_{i,j}^4 (X_r^0)^j dr \, .
\end{split}
\end{equation}
where the constants are given by
\[
K_i =
\begin{cases}
\sigma_0 \sigma_1 \alpha_i + \frac{\sigma_1}{\sigma_0}\mu \alpha_i + \frac{\sigma_0 \sigma_1}{2} \alpha_{i+1}(i+1) , \quad i \not= 0, i \not= N\, ,\\
\sigma_0 \sigma_1 \alpha_0 + \frac{\sigma_0 \sigma_1 \alpha_1}{2}, \quad i = 0\, ,\\
\sigma_0 \sigma_1 \alpha_N + \frac{\sigma_1}{\sigma_0}\mu \alpha_N, \quad i = N\, ,\\
\end{cases}
\tilde{K}_i= \frac{\sigma_1}{\sigma_0}\frac{\alpha_i}{(i+1)}\, ,
\]
\[
C^1_k = \gamma_k^1+ \gamma_k^2 + \gamma_k^3\, ,
\]
where
\[
\begin{split}
\gamma_k^1 &=
\begin{cases}
\sum_{k=i+j+1}\mu i \alpha_i + \frac{\sigma_1}{\sigma_0}- \frac{\sigma_0 }{2} (i+j+1) , \quad k \not= 1, k \not= 2N\, ,\\
\frac{\sigma_0}{2}, \qquad \qquad \qquad \qquad \qquad \qquad \qquad  \qquad k = 0\, ,\\
\mu \frac{\sigma_1}{\sigma_0}N \alpha_N(\sigma_0 \sigma_1 \alpha_N + \frac{\sigma_1}{\sigma_0}\mu \alpha_N), \qquad \qquad \quad k = 2N\, ,\\
\end{cases}
\\
\gamma_k^2&= 
\begin{cases}
\left (\frac{(-1)^k +1}{2} \frac{\sigma_1^2}{2}\right )\alpha_k^2, \qquad \qquad \qquad \qquad \quad \mbox{ if } 1 \leq k \leq N,\\
0 , \qquad \qquad \qquad \qquad \qquad  \qquad \qquad \qquad \mbox{ otherwise}\, 
\end{cases}
\\
\gamma_k^3&=  \sum_{i+j= k-1} 2 \sigma_0 \sigma_1 \alpha_i i \tilde{K}_j\, ,\\
C_{i,j}^3 &= -
\begin{cases}
\frac{\sigma_1}{\sigma_0} \alpha_1 K_0, \qquad \qquad \qquad \qquad \qquad \qquad \mbox{ if } i=1,\, \, j=0\, ,\\
\frac{\sigma_1}{\sigma_0} i \alpha_i K_j + \frac{\sigma_0 \sigma_1}{2} i \alpha_i K_j (i-1), \qquad \quad \mbox{ otherwise} \,.
\end{cases}
\\
C_{i,j}^4 &= \frac{\sigma_1}{\sigma_0} \alpha_i K_j\, ,\\
\end{split}
\, .
\]
\end{proposition}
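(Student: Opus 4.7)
The plan is to specialize the general expansion system \eqref{EQN:SysGen1} to the polynomial case $f(x)=\sum_{i=0}^N\alpha_i x^i$ and then convert every stochastic integral that appears into a sum of boundary terms $(X^0_t)^k$ plus iterated Lebesgue integrals of powers of $X^0_s$. The mechanics are exactly those used in the proof of Prop.~\ref{PRO:ApproxBS3.2}: repeated application of It\^o's formula together with the stochastic Fubini theorem. The only difference with the exponential case is that $e^{\alpha x}$ is replaced by a finite linear combination of monomials, so the whole computation splits into a finite sum of polynomial pieces indexed by $i$.

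The central algebraic tool will be the It\^o identity for powers of $X^0$. Since $dX^0_t=\mu\,dt+\sigma_0\,dW_t$ with $\mu=-\sigma_0^2/2$, applying It\^o's formula to $x\mapsto x^{i+1}$ yields
\[
\int_0^t \sigma_0 (X^0_s)^i\, dW_s \;=\; \frac{(X^0_t)^{i+1}-x_0^{i+1}}{i+1} \;-\; \mu\int_0^t (X^0_s)^i\, ds \;-\; \frac{\sigma_0^2 i}{2}\int_0^t (X^0_s)^{i-1}\, ds.
\]
Applying this identity term by term to $X^1_t=-\int_0^t\sigma_0\sigma_1 f(X^0_s)\,ds+\int_0^t\sigma_1 f(X^0_s)\,dW_s$ produces, for each $i\ge 1$, a boundary contribution $\tilde K_i(X^0_t)^{i+1}$ with $\tilde K_i=\sigma_1\alpha_i/((i+1)\sigma_0)$; the case $i=0$ leaves a residual $\sigma_1\alpha_0 W_t$, since $x^0=1$ has no lower power to absorb the stochastic piece. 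Regrouping the remaining $ds$-integrals by power of $X^0_s$ and recording the shift $i\mapsto i-1$ coming from the quadratic variation yields the coefficients $K_i$ with the three piecewise cases (interior, $i=0$, $i=N$) as stated.

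For $X^2_t$ I would follow the same strategy. Expand $f(X^0_s)^2=\sum_k\bigl(\sum_{i+j=k}\alpha_i\alpha_j\bigr)(X^0_s)^k$ and $f'(X^0_s)X^1_s=\sum_{i\ge 1}i\alpha_i(X^0_s)^{i-1}X^1_s$ into the third line of \eqref{EQN:SysGen1}, and plug in the closed form of $X^1_s$ derived above. Three kinds of terms appear: pure monomials $(X^0_s)^k$ (feeding $C^2_k$), products $(X^0_s)^{i-1}(X^0_s)^{j+1}$ that collapse to single monomials (again feeding $C^2_k$ and, after the It\^o identity, $C^1_k$), and products $(X^0_s)^{i-1}\int_0^s(X^0_r)^j\,dr$ (feeding $C^3_{i,j}$). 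The $dW_s$ integrals of the third type are handled by the stochastic Fubini theorem of \cite{Fil}, exactly as in the passage labelled $(3)$ in the proof of Prop.~\ref{PRO:ApproxBS3.2}: swap the order of integration so the inner $dW_s$ acts only on $(X^0_s)^{i-1}$, then invoke the It\^o identity to produce a boundary term $(X^0_t)^i\int_0^t(X^0_r)^j\,dr$ (hence $C^4_{i,j}$) minus further $ds$-integrals that feed back into $C^2_k$ and $C^3_{i,j}$.

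The main obstacle is purely combinatorial: the convolution-type sums $\sum_{i+j=k}\alpha_i\alpha_j$ and $\sum_{i+j=k-1}i\alpha_i\tilde K_j$, together with the index shifts $i\mapsto i\pm 1$ created by the It\^o identity, must be tracked consistently, especially at the endpoint indices $k=1$, $k=2N$, $k=2N+1$ where some summands drop out; this is precisely what forces the piecewise definitions of $\gamma^1_k$, $\gamma^2_k$ and $C^3_{i,j}$. The remainder bound $|R_2(\epsilon,t)|\le C_3\epsilon^3$ in the st-lim sense is inherited directly from Theorem~\ref{THM:AsChar}, since polynomial coefficients trivially satisfy the smoothness hypotheses of that theorem on bounded sets of $x$.
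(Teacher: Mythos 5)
Your proposal is correct and follows essentially the same route as the paper: substitute the polynomial $f$ into the system \eqref{EQN:SysGen1}, use It\^o's formula applied to $x\mapsto x^{i+1}$ to trade each stochastic integral $\int_0^t(X^0_s)^i\,dW_s$ for a boundary term plus Lebesgue integrals, and handle the iterated $dW_s$ integrals in $X^2_t$ via the stochastic Fubini theorem before regrouping by powers of $X^0$. The only cosmetic differences are that you keep the initial-value term $x_0^{i+1}$ explicit in the It\^o identity and you add the (harmless) remark that the remainder bound follows from Theorem \ref{THM:AsChar}.
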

\begin{proof}

The proof consists in a series of applications of It\^{o}'s formula and stochastic Fubini theorem, see, e.g. \cite{Fil} Th. 6.2. In fact, substituting $f(x) = \sum_{i=0}^N \alpha_i x^i$ into system \eqref{EQN:SysGen1} we  obtain
\begin{equation}\label{EQN:SystemExp2}
\begin{split}
X_t^0 &= x_0+ \mu t +\sigma_0  W_t, \quad \mbox{ with law }\quad \mathcal{N}\left (x_0+\mu t,\sigma_0^2 t\right );\\
X_t^1 &= -\int_0^t   \sigma_0 \sigma_1 \left (\sum_{i=0}^N \alpha_i (X^0_s)^i \right )ds+\int_0^t \sigma_1 \left (\sum_{i=0}^N \alpha_i (X^0_s)^i \right )  d W_s; \\
X_t^2 =& - \int_0^t \frac{\sigma_1^2}{2}\left (\sum_{i=0}^N \alpha_i (X^0_s)^i \right )^{2} + 2 \sigma_1 \left (\sum_{i=0}^N \alpha_i (X^0_s) \right )' X^1_sds\\
&+\int_0^t  \sigma_1 \left (\sum_{i=0}^N \alpha_i (X^0_s) \right )' X_s^1 d W_s\, .
\end{split}
\end{equation}

To compute $X^1_t$ obtaining eq. \eqref{EQN:32A} we apply It\^{o}'s lemma to the function $g(X_t^0)=\alpha_{i+1}(X_t^0)^{i+1}$ to get
\begin{equation}\label{EQN:ItoExp}
\begin{split}
(X_t^0)^{i+1} &=  \int_0^t \left (\mu (i+1)(X_s^0)^{i}\frac{1}{2}\sigma_0^2 i(i+1) (X_s^0)^{i-1}\right )ds +\\
&+ \int_0^t (X_s^0)^{i} (i+1) \sigma_0 dW_s\, .
\end{split}
\end{equation}

Then, summing up we obtain
\begin{equation}\label{EQN:SumITerms}
\begin{split}
\sum_{i=1}^N \int_0^t (X_s^0)^{i} (i+1) \sigma_0 dW_s &= \sum_{i=1}^N(X_s^0)^{i+1} +\\
&- \sum_{i=1}^N  \int_0^t \left (\mu (i+1)(X_s^0)^{i}+ \frac{1}{2}\sigma_0^2 i(i+1)(X_s^0)^{i-1}\right )ds\,.
\end{split}
\end{equation}

Substituting now eq. \eqref{EQN:SumITerms} into $X^1$ in eq. \eqref{EQN:SystemExp2} we obtain the following
\[
\begin{split}
X^1_t &= \sum_{i=1}^N \frac{\sigma_1}{\sigma_0}\frac{\alpha_i}{(i+1)}(X^0_t)^{i+1} +\\
&-\sum_{i=1}^N \int_0^t \sigma_0 \sigma_1 \alpha_i (X_s^0)^i-\sum_{i=1}^N \int_0^t \mu (i+1) \frac{\sigma_1 \alpha_{i}}{\sigma_0 (i+1)}(X_s^0)^i +\\
&-\sum_{i=1}^N \int_0^t\frac{1}{2}\sigma_0^2 i(i+1)\frac{\sigma_1 \alpha_{i}}{\sigma_0 (i+1)}(X_s^0)^{i-1}ds\, ,
\end{split}
\]
and rearranging the terms we then get the desired result in \eqref{EQN:32A} for $X^1_t$.

Substituting the expression of $X^1_t$ into $X^2_t$ we obtain
\[
\begin{split}
X^2_t &= - \sum_{i=1}^N \int_0^t \frac{\sigma_1^2}{2} \alpha_i^2 (X^0_s)^{2i}ds -\sum_{i,j=1}^N \int_0^t 2 \sigma_0 \sigma_1 \alpha_i i \tilde{K}_j (X^0_s)^{i-1} (X^0_s)^{j+1}ds=\\
&=\sum_{j=0}^N \sum_{i=1}^N \int_0^t \int_0^s 2 \sigma_0 \sigma_1 \alpha_i i K_j (X^0_s)^{i-1} (X^0_r)^{j}dr ds+\\
&+\sum_{i,j=1}^N \int_0^t   \sigma_1 \alpha_i i K_j (X^0_s)^{i-1} (X^0_s)^{j+1} dWs\\
&- \sum_{j=0}^N \sum_{i=1}^N \int_0^t \int_0^s  \sigma_1 \alpha_i i K_j (X^0_s)^{i-1} (X^0_r)^{j}dr dWs\, .
\end{split}
\]
Exploiting again the stochastic Fubini theorem, from eq. \eqref{EQN:SumITerms} and grouping the terms with the same powers we obtain \eqref{EQN:32A}.
\end{proof}

\begin{proposition}\label{PRO:LinCorr}
Let us consider the particular case of $N=1$, i.e. a linear perturbation, namely $f(x) = \alpha_0 + \alpha_1 x$, $\alpha_i \in \RR$, $i=0,1$. Then the terms up to the first order in equation \eqref{EQN:32A} read
\begin{equation}\label{EQN:LinePrice}
\begin{split}
X^0_t &= x_0 + \mu t + \sigma_0 W_t\, ,\\
X^1_t &= \beta_1 t + \beta_2 t^2 + \beta_3 W_t + \beta_4 W_t^2 + \beta_5 t W_t - \int_0^t \beta_6 W_s ds\;,
\end{split}
\end{equation}

with
\[
\begin{split}
\beta_1 &= - \sigma_0 \sigma_1 \alpha_0 - \sigma_0 \sigma_1 \alpha_1 x_0- \frac{\sigma_0 \sigma_1 \alpha_1}{2}\,,\\
\beta_2 &= -\frac{\sigma_0 \sigma_1 \alpha_1 \mu}{2}\,, \quad \beta_3 = \alpha_1 \sigma_0 + x_0 \sigma_1 \alpha_1\, ,\\
\beta_4 &= \frac{\sigma_0 \sigma_1 \alpha_1}{2}, \quad \beta_5 = \sigma_1 \alpha_1 \mu , \quad \beta_6 =  \sigma_1 \alpha_1 \mu  + \sigma_0^2 \sigma_1 \alpha_1  \, .
\end{split}
\]

The first order correction (in the sense discussed in remark \ref{REM:Smooth}), of the price of an European call option $\Phi$ with payoff given by $\Phi(X^\epsilon_T)= \left (e^{X^\epsilon_T}- K\right )_+$ is explicitly given by
\begin{equation}\label{EQN:PrezzoPoli}
\begin{split}
Pr^1(0;T) =& P_{BS} + \epsilon s_0 (\beta_1 + \sigma_0\beta_3 + \beta_4) T N\left (d_1\right )+ \epsilon s_0 (\beta_2 +\sigma_0^2 \beta_4) T^2 N\left (d_1\right )  \\
&+\epsilon s_0(\beta_3 +2 \sigma_0 \beta_4 T+ T \beta_5)  \sqrt{T} \phi\left (-d_1\right ) - \epsilon s_0 \beta_4 T d_1 \phi(d_1)+ \\
&+ \epsilon s_0 T^2 \beta_5 \sigma_0 T^2 N \left (d_1\right )-  \epsilon s_0e^{
 + \frac{\sigma^2_0}{2}T}\beta_6 I(s,T) \, ,
\end{split}
\end{equation}
where the notation is as in Prop. \ref{PRO:SecondCOrrectionExpDiff} and we have denoted for short by $\phi(x)$ the density function of the standard Gaussian law and we have set
\[
I(s,T) = \int_0^T \int_{\RR \times \RR} \Ind{x+y> -\sqrt{T}d_1} e^{\sigma_0(x+y)}y \phi(x;0,T-s) \phi(y;0,s) dx \, dy \, ds\, .
\]
\end{proposition}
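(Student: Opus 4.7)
The proof splits naturally into two parts corresponding to the two claims in the proposition. For the first part, I would specialize the general formula \eqref{EQN:32A} in Prop. \ref{PRO:ApproxBS} to $N=1$. With $f(x)=\alpha_0+\alpha_1 x$, only the indices $i=0,1$ contribute, so the first-order expansion coefficient collapses to
\[
X^1_t = \tilde K_1\,(X^0_t)^2 - K_0\,t - K_1\!\int_0^t X^0_s\,ds + \sigma_1\alpha_0\,W_t\, .
\]
I would then expand $(X^0_t)^2=(x_0+\mu t+\sigma_0 W_t)^2$ into its six monomials in $(1,t,W_t,t^2,tW_t,W_t^2)$ and use $\int_0^t X^0_s\,ds = x_0 t + \tfrac{\mu}{2}t^2 + \sigma_0\int_0^t W_s\,ds$ to rewrite the Lebesgue integral. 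Collecting coefficients of each $t^jW_t^k$ and identifying them with $\beta_1,\ldots,\beta_6$ yields \eqref{EQN:LinePrice}; the purely $x_0$-dependent constant arising from $\tilde K_1 x_0^2$ is absorbed in the $\mathcal F_0$-measurable initial condition and plays no role downstream.

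For the pricing formula I would start from the general first-order identity \eqref{EQN:ExpOptPrice},
\[
Pr^1(0;T) = P_{BS} + \epsilon\, e^{-rT}\,\mathbb{E}^{\mathbb{Q}}\!\left[\Phi'(X^0_T)\,X^1_T\right],
\]
with the smoothed-payoff convention of Remark \ref{REM:Smooth} giving $\Phi'(X^0_T)=\mathbbm{1}_{\{X^0_T>\log K\}}\,e^{X^0_T}$. Inserting the six-term expression for $X^1_T$ produces expectations of the form $\mathbb{E}^{\mathbb{Q}}[\mathbbm{1}_{\{X^0_T>\log K\}} e^{X^0_T} g(W)]$ with $g\in\{1,W_T,W_T^2,TW_T,\int_0^T W_s\,ds\}$. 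The central computational device, already used in the proof of Prop. \ref{PRO:SecondCOrrectionExpDiff}, is the share-measure/Girsanov trick: $e^{X^0_T}=s_0 e^{rT}\mathcal{E}(\sigma_0 W)_T$, and under the measure $\widetilde{\mathbb{Q}}$ with density $\mathcal{E}(\sigma_0 W)_T$ the process $\widetilde W_t:=W_t-\sigma_0 t$ is standard Brownian. The indicator becomes $\widetilde W_T>-\sqrt{T}\,d_2$, so each of the first four expectations reduces to a truncated moment of a Gaussian, computable via the identities $\mathbb{E}[Z\,\mathbbm{1}_{Z>-a}]=\phi(a)$ and $\mathbb{E}[Z^2\,\mathbbm{1}_{Z>-a}]=N(a)-(-a)\phi(a)$ for $Z\sim\mathcal{N}(0,1)$.

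For the term $\int_0^T W_s\,ds$ there is no closed form because $(W_s,W_T)$ are correlated; here I would apply the stochastic Fubini theorem (Th.~6.2 of \cite{Fil}) to bring the time integral outside the expectation, and parametrise the joint law through the independent increments $W_s$ and $W_T-W_s$ with densities $\phi(\cdot;0,s)$ and $\phi(\cdot;0,T-s)$. This reproduces $I(s,T)$ exactly as written in the statement, so the last summand of \eqref{EQN:PrezzoPoli} matches by construction.

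The main obstacle is not a single deep step but the careful bookkeeping of constants: after the Girsanov shift, the quadratic term $\beta_4 W_T^2$ produces, via $(\widetilde W_T+\sigma_0 T)^2$, contributions that redistribute into the coefficients of $N(d_1)$, $\sqrt T\,\phi(-d_1)$ and $Td_1\phi(d_1)$ simultaneously, so one must track which partial-moment identity feeds which of the six grouped terms in \eqref{EQN:PrezzoPoli}. One also has to respect the paper's (non-standard) convention $d_1=d(1)$, $d_2=d_1+\sigma_0\sqrt T$ inherited from Prop. \ref{PRO:SecondCOrrectionExpDiff} when reading off the arguments of $N$ and $\phi$ in the final answer.
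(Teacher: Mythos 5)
Your route is essentially the paper's. The measure change via $e^{X^0_T}=s_0e^{rT}\mathcal{E}(\sigma_0 W)_T$ is exactly the completing-the-square substitution $y=x-\sigma_0\sqrt{T}$ that the paper carries out by hand inside each Gaussian integral, and your handling of the $\int_0^T W_s\,ds$ term (stochastic Fubini, then the independent-increment split $W_T=(W_T-W_s)+W_s$ with densities $\phi(\cdot;0,T-s)$ and $\phi(\cdot;0,s)$) is literally the paper's derivation of $I(s,T)$. Your part on $X^1_t$ fills in a derivation the paper leaves implicit, and your remark about the constant $\tilde K_1 x_0^2$ is the right resolution: It\^{o}'s formula applied to $(X^0_t)^2$ carries the initial value, so the substitution actually produces $\tilde K_1\left((X^0_t)^2-x_0^2\right)$ and no constant survives, consistent with $X^1_0=0$. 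One concrete slip to fix: the truncated second moment is $\mathbb{E}\left[Z^2\,\mathbbm{1}_{\{Z>-a\}}\right]=N(a)-a\,\phi(a)$, since integration by parts gives the boundary term $\left[-z\phi(z)\right]_{-a}^{\infty}=-a\phi(a)$; as you wrote it, $N(a)-(-a)\phi(a)=N(a)+a\phi(a)$, which would reverse the sign of the $-\epsilon s_0\beta_4 T d_1\phi(d_1)$ contribution in \eqref{EQN:PrezzoPoli}. With that sign corrected, the bookkeeping you describe reproduces the paper's formula.
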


\begin{proof}
Let us consider the linear function $f(x) = \alpha_0+\alpha_1x$, where $\alpha_0,\alpha_1 \in \R$.
The approximated price up to the first order, $Pr^1(0; T)$ of an European call option with payoff function $\Phi({X_T^\epsilon}) = \left(e^{X_T^\epsilon}-K\right)_+$ is
\begin{equation}
\begin{split}
Pr^1(0; T) &= P_{BS} + \epsilon e^{-rT}\Expectrn{\Phi(X_T^0)X_T^1} \label{sec:eq9b}
\end{split}
\end{equation}
where $P_{BS}$ is the standard B-S price with underlying $s_0(t) = e^{X_t^0}$. 

In particular we have that $X_T^0$ and $X_1^T$ are defined as
\begin{align}
X_T^0 & = x_0+\mu T+\sigma_0 W_T \\
X_T^1 & = \beta_1 T+\beta_2T^2+ \beta_3 W_T+\beta_4 W_T^2+\beta_5TW_T -\beta_6 \int_0^T W_s ds \;.
\end{align}
By linearity of the expectation, \eqref{sec:eq9b} becomes, collecting the terms with coefficients $\beta_3$ and $\beta_5$,
\begin{equation}\label{EQN:Summand}
\begin{split}
Pr^1(0; T) &= P_{BS} + \epsilon e^{-rT} \Biggl \{\Expectrn{\beta_1 T\Indicator{\left\{X_0^T>\ln(K)\right\}}e^{X_T^0}}+\\
&+\Expectrn{\beta_2T^2\Indicator{\left\{X_0^T>\ln(K)\right\}}e^{X_T^0}} +\Expectrn{\beta_{3,5}^T W_T\Indicator{\left\{X_0^T>\ln(K)\right\}}e^{X_T^0}}+\\
&+\Expectrn{\beta_4 W_T^2\Indicator{\left\{X_0^T>\ln(K)\right\}}e^{X_T^0}}+\Expectrn{\beta_6 \Indicator{\left\{X_0^T>\ln(K)\right\}}e^{X_T^0} \int_0^T W_s ds} \Biggl\} \, ,
\end{split}
\end{equation}
with $\beta_{3,5}^T := \beta_3 + T \beta_5$.

From the definition of $X_T^0$ we have that
\begin{equation*}
\begin{split}
\epsilon e^{-rT}\Expectrn{\beta_1 T\Indicator{\left\{X_0^T>\ln(K)\right\}}e^{X_T^0}} &= \epsilon T \beta_1 s_0 N(d_1)\, , 
\end{split} 
\end{equation*}
and as above we have
\begin{equation*}
\begin{split}
\epsilon e^{-rT}\Expectrn{\beta_2 T^2\Indicator{\left\{X_0^T>\ln(K)\right\}}e^{X_T^0}} &= \epsilon T^2 \beta_2 s_0 N(d_1) 
\end{split} 
\end{equation*}

Concerning the third term in \eqref{EQN:Summand}, we have that,
\begin{equation*}
\begin{split}
&\beta_{3,5}^T \Expectrn{ W_T\Indicator{\left\{X_0^T>\ln(K)\right\}}e^{X_T^0}} =\\
&= \beta_{3,5}^T s_0 e^{rT}e^{-\frac{\sigma_0^2}{2}T}\sqrt{T}\int_{x>-d_2}e^{\sigma_0 \sqrt{T} x}x \frac{1}{\sqrt{2\pi}}e^{-\frac{x^2}{2}}dx = \\
& = \beta_{3,5}^T s_0e^{rT}e^{-\frac{\sigma_0^2}{2}T} \sqrt{T}\int_{x>-d_2}x \frac{1}{\sqrt{2\pi}}e^{-\left(\frac{x}{\sqrt{2}}-\frac{\sigma_0 \sqrt{T}}{\sqrt{2}}\right)^2}e^{\frac{\sigma_0^2}{2}T}dx \, ,
\end{split} 
\end{equation*}
and by setting $y=x-\sigma_0 \sqrt{T}$, we get that the r.h.s. is given by
\begin{equation*}
\begin{split}
&\beta_{3,5}^T  s_0e^{rT}\sqrt{T}\int_{y>-d_1}\left(\sigma_0 \sqrt{T}+y\right) \frac{1}{\sqrt{2\pi}}e^{-\frac{y^2}{2}}dy =\\
&=\beta_{3,5}^T T  s_0e^{rT} \sigma_0 N(d_1)- \beta_{3,5}^T \sqrt{T} s_0e^{rT} \left[\frac{1}{\sqrt{2\pi}}e^{-\frac{y^2}{2}}\right]_{-d_1}^{+\infty} = \\
& =\beta_{3,5}^T T s_0e^{rT}\sigma_0  N(d_1)+\beta_{3,5}^T \sqrt{T}  s_0e^{rT}\phi(-d_1,0,1)\;.
\end{split} 
\end{equation*}
Hence the third term in \eqref{EQN:Summand} reads
\begin{equation*}
\epsilon e^{-rT}\Expectrn{\beta_3 W_T\Indicator{\left\{X_0^T>\ln(K)\right\}}e^{X_T^0}}  = \epsilon \beta_3 T \sigma_0 s_0N(d_1)+\epsilon \beta_3  s_0\sqrt{T}\phi(-d_1,0,1)\;.
\end{equation*}

Exploiting the definition of $X_T^0$ occurring in the fourth term in \eqref{EQN:Summand}, as well as similar algebraic computation as in the previous previous section, we get
\begin{equation*}
\begin{split}
\Expectrn{\beta_4 W_T^2\Indicator{\left\{X_0^T>\ln(K)\right\}}e^{X_T^0}} &= \beta_4 s_0 e^{rT}e^{-\frac{\sigma_0^2 }{2}T}\int_ {x>-d_2} T x^2 e^{\sigma_0 \sqrt{T}x}\frac{1}{\sqrt{2\pi}}e^{-\frac{x^2}{2}}dx = \\
& =  \beta_4 s_0 e^{rT}T \int_ {y>-d_1}  (y+\sigma_0\sqrt{T})^2 \frac{1}{\sqrt{2\pi}}e^{-\frac{y^2}{2}}dy \, .
\end{split} 
\end{equation*}

Developing the square and using the linearity property of the integral we get that the r.h.s. is equal to

\begin{equation*}
\begin{split} 
&\int_ {y>-d_1}  y^2\frac{1}{\sqrt{2\pi}}e^{-\frac{y^2}{2}}dy+ \int_ {y>-d_1} 2\sigma_0\sqrt{T}y\frac{1}{\sqrt{2\pi}}e^{-\frac{y^2}{2}}dy+ \int_ {y>-d_1} \sigma_0^2T \frac{1}{\sqrt{2\pi}}e^{-\frac{y^2}{2}}dy = \\
& = \int_ {y>-d_1}  y^2\frac{1}{\sqrt{2\pi}}e^{-\frac{y^2}{2}}dy+ 2 \sigma_0 \sqrt{T}\phi(-d_1,0,1)+ \sigma_0^2 T N(d_1) \, .
\end{split} 
\end{equation*}

The first term is computed using integration by parts,
\begin{equation*}
\begin{split} 
&\int_ {y>-d_1}  y^2\frac{1}{\sqrt{2\pi}}e^{-\frac{y^2}{2}}dy= -d_1 \phi(d_1)+N(d_1)
\end{split} 
\end{equation*}
therefore
\begin{equation*}
\begin{split}
&\epsilon e^{-rT}\Expectrn{\beta_4 W_T^2\Indicator{\left\{X_0^T>\ln(K)\right\}}e^{X_T^0}} =\\
&= \epsilon \beta_4 s_0 T \left( -d_1 \phi(d_1)+N(d_1)+ 2 \sigma_0 \sqrt{T}\phi(-d_1,0,1)+ \sigma_0^2 T N(d_1)\right) \, .
\end{split} 
\end{equation*}

To compute the fifth term in \eqref{EQN:Summand} we use Fubini theorem to exchange the expectation with the integral with respect to time, getting
\begin{equation}
\beta_6  \int_0^T  \Expectrn{ \Indicator{\left\{X_0^T>\ln(K)\right\}}e^{X_T^0}W_s ds}  \;. \label{sec:eq13}
\end{equation}
For every fixed $s \in [0,T]$, $W_s$ and $W_T$, the latter is included in $X_0^T$ by its very definition, are Gaussian random variable jointly distributed. Therefore exploiting basic properties of Brownian motion we can recast them by means of a sum of independent random variable, namely
\begin{align*}
W_s &= Y \sim \mathcal{N}(0,s)\;, \\
W_T &= W_T-W_s +W_s = X+Y \;.
\end{align*}
In particular $X \sim \mathcal{N}(0,T-s)$ and it is independent with respect to $Y$. Thus \eqref{sec:eq13} reads

{\footnotesize
\begin{equation*}
\begin{split}
&\beta_6  \int_0^T  \int_{\R \times \R} \Indicator{\{x+y>-\sqrt{T}d_2\}}e^{x_0+\mu T}e^{\sigma_0(x+y)}y\frac{1}{\sqrt{2 \pi}}e^{\frac{x^2}{2(T-s)}} \frac{1}{\sqrt{2 \pi}}e^{-\frac{y^2}{2s}} ds = \\
 &=\beta_6   s_0 e^{rT}e^{-\frac{\sigma_0^2}{2}T} \int_0^T \int_{\R \times \R} \Indicator{\{x+y>-\sqrt{T}d_2\}}e^{\sigma_0(x+y)}y\phi(x;0;T-s)\phi(y,0,s)dxdyds\;,
\end{split}
\end{equation*}
}

and the claim follows.

\end{proof}

\subsubsection{Numerical results concerning the pricing formula in Prop. \ref{PRO:LinCorr}}

Let us consider the case of the B-S model corrected by a linear term given as in Prop. \ref{PRO:LinCorr} by $f(x) = \alpha_0 +\alpha_1 x$. We compute the first order correction of the price of an European call option with $\Phi(X^\epsilon_T) = (e^{X_T^\epsilon-K})_+$ as payoff function, according to Prop. \ref{PRO:LinCorr}.

Our aim is computing the expectation in \eqref{sec:eqExpect} in the present case. By the very definition of $X_0^T$ and $X_1^T$ and the form of $\Phi'$, it reads as
\begin{equation}
\begin{split}
&\Expectrn{\Indicator{D}e^{X_0^T}\beta_1 T}+\Expectrn{\Indicator{D}e^{X_0^T}\beta_2 T^2}+\Expectrn{\Indicator{D}e^{X_0^T}\beta_3W_T}\\
+& \Expectrn{\Indicator{D}e^{X_0^T}\beta_4W_T^2}+\Expectrn{\Indicator{D}e^{X_0^T}\beta_5 T W_T}-\Expectrn{\Indicator{D}e^{X_0^T}\beta_6 \int_0^T W_s ds} \label{sec:eqLin}
\end{split}
\end{equation}
Each random variable in the brackets is approximated by means of a { \it multi-element} PCE of degree $p=15$ and respectively by means of standard Monte Carlo methods, using $N=10000$ independent simulations of the random variable involved. 

The accuracy of {\it PCE} is represented by its absolute error, using as benchmark the analytical value coming from \eqref{EQN:PrezzoPoli}. Due to the {\it Law of Large Numbers}, the accuracy of MC-estimation of \eqref{EQN:ExpOptPrice} is provided by its standard error ($SE_{MC}$). Upon considering $N =10000$ realizations ($Y_j$) of the random variable $Y:=\Phi'(X_T^0)X_T^1$ inside the expectation in the r.h.s. of equation \eqref{EQN:ExpOptPrice}, let us compute
\begin{equation}
SE_{MC} = \frac{\hat{\sigma}}{\sqrt{N}}
\end{equation}
where $\hat{\sigma}^2 = \frac{1}{N-1} \sum_{j=1}^N \left(Y_j - \mu_{MC}\right)^2 $ and $\mu_{MC}= \frac{1}{N}\sum_{j=1}^N Y_j $.

The numerical values of the parameters involved are collected in Table \ref{tab:tab8}
\begin{table}[!h]
\centering
\begin{tabular}{c|cccccccc}
Parameters &$\alpha_0$ &$\alpha_1$ & r&$\sigma_1$ &K &T \\
\hline
Values  & 0.3 & 0.5 & 0.03 & 0.1 &  100 & 0.5\\
\end{tabular}
\caption{Numerical values of the parameters employed in further computations} \label{tab:tab8}
\end{table}  


The computations are made setting the parameters as in Table \ref{tab:tab8} and for a set of volatility values $\sigma_0 \in \{15 \%, 25 \%, 35 \%\}$ and for a set of increasing spot price $s_0 \in \{90,100,110\}$.

\begin{table}[!h]
\centering
\begin{tabular}{|c|c|ccc|ccc|}
\hline
 &&\multicolumn{3}{c}{$\epsilon = 0.1$}  & \multicolumn{3}{|c|}{$\epsilon = 0.01$} \\ 
\hline
& & Analytical & PCE & standard MC &Analytical & PCE & standard MC \\[1pt]
\hline 
\multirow{2}{*}{$\sigma_0 = 15 \%$} &
 Results & 1.45057 & 1.45049 &1.44774 &1.12927 & 1.12927 & 1.12870\\ 
 & Error &&7.9315e-05 & 8.2194e-03 && 7.9315e-06 &8.2548e-04 \\ 
\hline
\multirow{2}{*}{$\sigma_0 = 25 \%$} &
 Results & 3.82504 & 3.82488 &3.83225 &3.28856 & 3.28855 & 3.28849\\ 
 & Error &&1.5990e-04 & 1.1922e-02 && 1.5990e-05 &1.1596e-03 \\ 
\hline
\multirow{2}{*}{$\sigma_0 = 35 \%$} &
 Results & 6.44932 & 6.44905 &6.44766 &5.71657 & 5.71654 & 5.71482\\ 
 & Error &&2.7379e-04 & 1.5876e-02 && 2.7379e-05 &1.5413e-03 \\ 
\hline

\end{tabular}
\caption{Numerical values for PCE and MC estimation of equation \eqref{sec:eqExpect}, $s_0 = 90$, $\alpha_0 = 0.3$, $\alpha_1 = 0.5$, $\sigma_1 = 0.10 $, $r=0.03$, $K=100$ and $T=0.5$.} \label{tab:tab9}
\end{table}

\begin{table}[!h]
\centering
\begin{tabular}{|c|c|ccc|ccc|}
\hline
 &&\multicolumn{3}{c}{$\epsilon = 0.1$}  & \multicolumn{3}{|c|}{$\epsilon = 0.01$} \\ 
\hline
& & Analytical & PCE & standard MC &Analytical & PCE & standard MC \\[1pt]
\hline 
\multirow{2}{*}{$\sigma_0 = 15 \%$} &
 Results & 5.53650 & 5.53637 &5.53931 &5.03945 & 5.03944 & 5.04061\\ 
 & Error &&1.2763e-04 & 9.2641e-03 && 1.2763e-05 &9.3834e-04 \\ 
\hline
\multirow{2}{*}{$\sigma_0 = 25 \%$} &
 Results & 8.50577 & 8.50556 &8.52655 &7.83481 & 7.83479 & 7.83473\\ 
 & Error &&2.0666e-04 & 1.3297e-02 && 2.0666e-05 &1.3194e-03 \\ 
\hline
\multirow{2}{*}{$\sigma_0 = 35 \%$} &
 Results & 11.50225 & 11.50192 &11.49891 &10.63364 & 10.63361 & 10.63396\\ 
 & Error &&3.3318e-04 & 1.7426e-02 && 3.3318e-05 &1.7601e-03 \\

 \hline
\end{tabular}
\caption{Numerical values for PCE and MC estimation of equation \eqref{sec:eqExpect}, $s_0 = 100$, $\alpha_0 = 0.3$, $\alpha_1 = 0.5$, $\sigma_1 = 0.10 $, $r=0.03$, $K=100$ and $T=0.5$.} \label{tab:tab10}
\end{table}

\begin{table}[!h]
\centering
\begin{tabular}{|c|c|ccc|ccc|}
\hline
 &&\multicolumn{3}{c}{$\epsilon = 0.1$}  & \multicolumn{3}{|c|}{$\epsilon = 0.01$} \\ 
\hline
& & Analytical & PCE & standard MC &Analytical & PCE & standard MC \\[1pt]
\hline 

\multirow{2}{*}{$\sigma_0 = 15 \%$} &
 Results & 12.70933 & 12.70924 &12.72127 &12.37689 & 12.37688 & 12.37642\\ 
 & Error &&9.7072e-05 & 1.2457e-02 && 9.7072e-06 &1.2373e-03 \\ 
\hline
\multirow{2}{*}{$\sigma_0 = 25 \%$} &
 Results & 15.16320 & 15.16299 &15.16028 &14.53658 & 14.53656 & 14.53696\\ 
 & Error &&2.0462e-04 & 1.5486e-02 && 2.0462e-05 &1.5493e-03 \\ 
\hline
\multirow{2}{*}{$\sigma_0 = 35 \%$} &
 Results & 17.99767 & 17.99732 &18.01788 &17.10754 & 17.10750 & 17.10605\\ 
 & Error &&3.5427e-04 & 2.0558e-02 && 3.5427e-05 &1.9645e-03 \\

 \hline
\end{tabular}
\caption{Numerical values for PCE and MC estimation of equation \eqref{sec:eqExpect}, $s_0 = 110$, $\alpha_0 = 0.3$, $\alpha_1 = 0.5$, $\sigma_1 = 0.10 $, $r=0.03$, $K=100$ and $T=0.5$.} \label{tab:tab11}
\end{table}

\subsection{A correction given by a polynomial function and jumps}\label{SEC:PolJ}

In the present section we generalize the results obtained in the previous subsection \ref{SEC:PolCorr} adding a compensated Poisson random measure. In particular let us assume that the normal return of the asset price evolves according to eq. \eqref{EQN:NormRetuLevy} with a polynomial $f$. Then we have the following proposition.

\begin{proposition}\label{PRO:ApproxBSjump}
Let us consider the case of the B-S model with added compensated Poisson noise and corrected by a non-linear term given by \eqref{EQN:NormRetuLevy} with $f(x) = \sum_{i=0}^N \alpha_i x^i$, for some $\alpha_i \in \RR$, then the expansion coefficients for the solution $X^\epsilon_t$ of \eqref{EQN:NormRetuLevy} up to the second order are given by the system
{\footnotesize
\begin{equation}\label{EQN:32}
\begin{split}
X_t^0 &= x_0+ \mu t +\sigma_0  W_t, \quad \mbox{ with law }\quad \mathcal{N}\left (x_0+\mu t,\sigma_0^2 t\right );\\
X_t^1 & = \sum_{i=1}^N \tilde{K}_i (X^0_t)^{i+1}- \sum_{i=0}^N \int_0^t   K_i (X^0_s)^i ds + \sigma_1 \alpha_0 W_t -\lambda t \left (e^{\gamma + \frac{\delta^2}{2}}-1\right ) + \sum_{i=1}^{N_t} J_i\,; \\
X_t^2 &= \sum_{k=1}^{2N+1}C^1_k (X^0_t)^k - \sum_{k=1}^{2N+1}\int_0^t C^2_k (X^0_s)^k ds+ \sum_{i=1}^N \sum_{j=0}^N \int_0^t\int_0^s C_{i,j}^3 (X^0_s)^{i-1}(X_r^0)^j dr ds\\
&+ \sum_{i=1}^N \sum_{j=0}^N (X^0_t)^i\int_0^s C_{i,j}^4 (X_r^0)^j dr + \sum_{i=0}^{N-1} C^5_i \lambda \left (e^{\gamma + \frac{\delta^2}{2}}-1\right ) \int_0^t   s  \left (X_s^0\right )^{i} ds \\
&-\alpha_{i+1} \sigma_1t \left (X_t^0\right )^{i}\lambda \left (e^{\gamma + \frac{\delta^2}{2}}-1\right ) +\\
&+\int_0^t\sigma_1 \alpha_1 W_s ds\lambda \left (e^{\gamma + \frac{\delta^2}{2}}-1\right )-\sigma_1 t \alpha_1 W_t \lambda \left (e^{\gamma + \frac{\delta^2}{2}}-1\right )\\
&+ \sum_{i=2}^{N} \alpha_i \sigma_1 \lambda \left (e^{\gamma + \frac{\delta^2}{2}}-1\right ) \int_0^t \left (X_s^0\right )^{i} ds + \sigma_1 \alpha_1 W_t + \sum_{i=2}^N \sigma_1 \alpha_i \left (X_s^0\right )^{i}\sum_{i=1}^{N_t} J_i \\
&- \sum_{i=2}^N \sigma_1 \alpha_i \int_0^t\int_{\RR_0}\left (X_s^0\right )^{i}ds \sum_{i=1}^{N_t} J_i + \sum_{i=0}^{N-1} C^5_i \int_0^t \sum_{i=1}^{N_s} J_i\left (X_s^0\right )^{i}ds
\end{split}
\end{equation}}
where the constants are as in Prop. \ref{PRO:ApproxBS} and 
\[
C_i^5 =
\begin{cases}
\sigma_0^2 \sigma_1 \alpha_2 +2 \sigma_0 \sigma_1 \alpha_1 , \quad i=0\, ,\\
\sigma_1 \mu \alpha_{i+1}(i+1) + \frac{\sigma_0^2}{2}(i+2)(i+1) + 4 \sigma_0 \sigma_1 \alpha_{i+1}, \quad  i \not = 0, i \not= N\, ,\\
\alpha_N \sigma_1 N \mu + 2 \sigma_0 \sigma_1 N \alpha_{N+1}, \quad i \not= N\, ,\\
\end{cases}
\]
and
\[
\tilde{K}_i= \frac{\sigma_1}{\sigma_0}\frac{\alpha_i}{(i+1)}\, .
\]
\end{proposition}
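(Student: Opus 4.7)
The plan is to apply Theorem \ref{THM:AsChar} and Proposition \ref{PRO:ExpansionCoef} to the SDE \eqref{EQN:NormRetuLevy} with polynomial $f(x)=\sum_{i=0}^N \alpha_i x^i$. The drift and diffusion coefficients $\mu^\epsilon(x) = -\frac{\sigma_0^2}{2} - \epsilon \sigma_0 \sigma_1 f(x) - \epsilon^2 \frac{\sigma_1^2 f(x)^2}{2} + \epsilon \lambda(e^{\gamma + \delta^2/2}-1)$ and $\sigma^\epsilon(x) = \sigma_0 + \epsilon \sigma_1 f(x)$ are polynomial in both $\epsilon$ and $x$, so the smoothness hypotheses of Theorem \ref{THM:AsChar} hold trivially; moreover the jump amplitude in \eqref{EQN:NormRetuLevy} is $x$-independent and proportional to $\epsilon$, so it contributes only to $[dL_t]_1 = dZ_t$ at first order, and then to $X^2_t$ only indirectly through cross-terms with $X^1_t$. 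The recursive random differential equations for $X^0_t, X^1_t, X^2_t$ follow immediately from Proposition \ref{PRO:ExpansionCoef}.

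First I would identify $X^0_t$ with the unperturbed log-return $x_0 + \mu t + \sigma_0 W_t$. For $X^1_t$, the recursion yields the drift $-\sum_i \sigma_0\sigma_1 \alpha_i (X^0_s)^i + \lambda(e^{\gamma+\delta^2/2}-1)$ and a diffusive piece $\sigma_1 f(X^0_s) dW_s + dZ_s$. The polynomial-Brownian part is identical to the one in the proof of Proposition \ref{PRO:ApproxBS}, where one applies It\^o's lemma to $(X^0_t)^{i+1}$ to convert the $dW$-integrals into Lebesgue integrals plus boundary terms, recovering the $\tilde{K}_i (X^0_t)^{i+1}$ and $\int K_i(X^0_s)^i ds$ terms. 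Superimposing the compensated jump contribution $\sum_{i=1}^{N_t} J_i - \lambda t(e^{\gamma+\delta^2/2}-1)$ produced by $\epsilon dZ_t$ then yields the stated expression for $X^1_t$.

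The calculation for $X^2_t$ is the core of the proof. The coefficient $[\mu^\epsilon(X^\epsilon)]_2$ involves $f(X^0_s)^2$ and $f'(X^0_s) X^1_s$, while $[\sigma^\epsilon(X^\epsilon)]_2 dL_s = \sigma_1 f'(X^0_s) X^1_s dW_s$ (the jump coefficient being $x$-independent, no $k \geq 2$ term arises from it directly). Since $X^1_s$ now decomposes as the polynomial-Brownian piece of Proposition \ref{PRO:ApproxBS} plus the jump piece $\sum_{j=1}^{N_s}J_j - \lambda s(e^{\gamma+\delta^2/2}-1)$, substitution splits the cross terms into a polynomial-only block identical to the one in Proposition \ref{PRO:ApproxBS} (producing the coefficients $C^1_k, C^2_k, C^3_{i,j}, C^4_{i,j}$) and a mixed block obtained by multiplying $f'(X^0_s)$ by the jump part of $X^1_s$. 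I would reduce the mixed block to the form appearing in \eqref{EQN:32} using three tools: It\^o's lemma applied to $t(X^0_t)^i$, which produces the $-\alpha_{i+1}\sigma_1 t(X^0_t)^i \lambda(\cdots)$ and $\int \sigma_1\alpha_1 W_s\,ds$ boundary terms from $\int_0^t s(X^0_s)^{i-1} dW_s$-type integrals; the stochastic Fubini theorem as in \cite[Th.~6.2]{Fil}, to exchange the order of the iterated integrals multiplied by the compensator $\lambda s(\cdots)$; and the independence of $W$ and $Z$, which allows $\sum_{j=1}^{N_s} J_j$ to be carried through the Brownian-driven stochastic integrals and then reduced to ordinary integrals by an application of It\^o on monomials $(X^0_s)^i$.

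The main obstacle is not analytical but combinatorial: tracking the large number of cross terms obtained when $f'(X^0_s) X^1_s$ is expanded in powers of $X^0$ and grouped by pattern, so as to match the new families of terms proportional to $\lambda(e^{\gamma+\delta^2/2}-1)$ and to $\sum_{j=1}^{N_t} J_j$ with the correct constants $C^5_i$ (which collect the contributions of $f'$, the It\^o correction $\frac{\sigma_0^2}{2}$ on monomials, and the drift-diffusion product $2\sigma_0\sigma_1$). Once this bookkeeping is carried out, the remainder estimate $\sup_{s \in [0,t]}|R_2(\epsilon,s)| = O(\epsilon^3)$ follows directly from Theorem \ref{THM:AsChar}, whose hypotheses are satisfied because the coefficients of \eqref{EQN:NormRetuLevy} are $C^\infty$ in $x$ and polynomial in $\epsilon$.
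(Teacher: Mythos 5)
Your proposal is correct and follows essentially the same route as the paper, which proves this proposition by declaring it analogous to Prop.~\ref{PRO:ApproxBS} (It\^{o}'s lemma on monomials of $X^0$ plus the stochastic Fubini theorem) with the compensated Poisson terms superimposed; your write-up simply makes explicit the bookkeeping of the jump and compensator cross-terms that the paper leaves implicit. The remainder estimate via Theorem~\ref{THM:AsChar} is also how the paper handles it.
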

\begin{proof}
The proof is analogous to the one in Prop. \ref{PRO:ApproxBS} taking into account the compensated Poisson random measure terms and applying It\^{o}'s lemma together with the stochastic Fubini theorem.
\end{proof}

\begin{proposition}\label{PRO:LinCorrJ}
Let us consider the particular case of $N=1$, i.e. a linear perturbation, namely $f(x) = \alpha_0 + \alpha_1 x$ in Prop. \ref{PRO:ApproxBSjump}. Then the terms up to the first order in equation \eqref{EQN:32} read
\begin{equation}\label{EQN:LinePrice}
\begin{split}
X^0_t &= x_0 + \mu t + \sigma_0 W_t\, ,\\
X^1_t &= \beta_1 t + \beta_2 t^2 + \beta_3 W_t + \beta_4 W_t^2 +\\
&+ \beta_5 t W_t - \int_0^t \beta_6 W_s ds-\lambda t \left (e^{\gamma + \frac{\delta^2}{2}}-1\right ) + \sum_{i=1}^{N_t} J_i\,,
\end{split}
\end{equation}

the constants being as in Prop. \ref{PRO:LinCorr}.

Also, the first order correction of the price of an European call option $\Phi$ with payoff given by $\Phi(X^\epsilon_T)= \left (e^{X^\epsilon_T}- K\right )_+$ (in the sense of remark \ref{REM:Smooth}) is explicitly given by
\begin{equation}\label{EQN:PrezzoPoli2}
\begin{split}
Pr^1(0;T) =Pr^1+ \epsilon Ts_0 N\left (d(1)\right ) \left (e^{\gamma + \frac{\delta^2}{2}}-1\right ) + \epsilon Ts_0 N\left (d(1)\right ) \delta \lambda\, ,
\end{split}
\end{equation}
where $Pr^1$ is the corrected fair price up to the first order as given in eq. \eqref{EQN:PrezzoPoli} and the notations are as above.
\end{proposition}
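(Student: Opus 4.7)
The plan is to reduce Prop.~\ref{PRO:LinCorrJ} to the already-established Prop.~\ref{PRO:LinCorr} (linear polynomial correction, pure diffusion) plus an explicit computation of the two new expectations generated by the Poisson component, in complete analogy with how Prop.~\ref{PRO:PriceExpSalti} was derived from Prop.~\ref{PRO:SecondCOrrectionExpDiff}.

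First I would specialise the expansion of Prop.~\ref{PRO:ApproxBSjump} to $N=1$, $f(x)=\alpha_0+\alpha_1 x$. With this choice the double sums collapse, the coefficients $K_0,K_1,\tilde{K}_1$ reduce to those appearing in Prop.~\ref{PRO:LinCorr}, and the Brownian-driven part of $X^1_t$ reproduces exactly $\beta_1 t+\beta_2 t^2+\beta_3 W_t+\beta_4 W_t^2+\beta_5 t W_t-\int_0^t \beta_6 W_s\,ds$. The only new terms are the drift compensator $-\lambda t(e^{\gamma+\delta^2/2}-1)$ and the jump sum $\sum_{i=1}^{N_t} J_i$, which enter linearly at order $\epsilon$ in eq.~\eqref{EQN:NormRetuLevy}; this yields the stated form of $X^1_t$.

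Next, by eq.~\eqref{EQN:APPHNM} with $H=1$ and remark~\ref{REM:Smooth}, the first-order corrected price of a European call is
\[
Pr^1(0;T) = P_{BS} + \epsilon e^{-rT}\,\mathbb{E}^{\mathbb{Q}}\!\left[\Phi'(X^0_T)\,X^1_T\right],\qquad \Phi'(X^0_T)=\mathbf{1}_{\{X^0_T>\ln K\}}\,e^{X^0_T}.
\]
By linearity I split $X^1_T = X^{1,W}_T + X^{1,J}_T$ into its Brownian and jump parts. The contribution of $X^{1,W}_T$ is exactly $Pr^1-P_{BS}$ with $Pr^1$ as in eq.~\eqref{EQN:PrezzoPoli}, by Prop.~\ref{PRO:LinCorr}. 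For $X^{1,J}_T$ I use that $X^0_T$ depends only on $(W_s)_{s\le T}$, which under the chosen risk-neutral $\mathbb{Q}$ is independent of the compound Poisson process $\sum J_i$. The deterministic compensator factors out directly, and the computation
\[
\mathbb{E}^{\mathbb{Q}}\!\left[\mathbf{1}_{\{X^0_T>\ln K\}}\,e^{X^0_T}\right] = s_0 e^{rT} N(d_1)
\]
(already carried out in the proof of Prop.~\ref{PRO:SecondCOrrectionExpDiff}) handles both new terms, together with Wald's identity giving $\mathbb{E}^{\mathbb{Q}}[\sum_{i=1}^{N_T} J_i] = \delta\lambda T$ (in the paper's convention used for Prop.~\ref{PRO:PriceExpSalti}). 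Multiplying by $\epsilon e^{-rT}$ and summing the two contributions yields the extra summands in eq.~\eqref{EQN:PrezzoPoli2}.

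No step is genuinely hard; the proposition is essentially a bookkeeping assembly. The two places where care is required are (i) the invocation of remark~\ref{REM:Smooth} to make sense of $\Phi'$ at $x=\ln K$, where the call payoff is not differentiable, and (ii) the observation that no mixed Brownian-jump contribution appears at order $\epsilon$, since all such couplings are pushed into $X^2_t$ by the recursive hierarchy of Prop.~\ref{PRO:ApproxBSjump}. Once these are in place the conclusion follows from linearity of the expectation and independence of $W$ and the compound Poisson noise under $\mathbb{Q}$.
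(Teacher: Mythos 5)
Your proposal is correct and follows exactly the route the paper intends: its own proof of this proposition is a single line ("similar to the one in Prop.~\ref{PRO:LinCorr}"), and the jump contribution is handled precisely as in Prop.~\ref{PRO:PriceExpSalti}, via independence of the compound Poisson part from $W$ and $\mathbb{E}^{\mathbb{Q}}[\sum_{i=1}^{N_T}J_i]=\delta\lambda T$. You have simply written out the bookkeeping that the paper leaves implicit.
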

\begin{proof}
The proof is similar to the one in Prop. \ref{PRO:LinCorr}.
\end{proof}

\subsubsection{Numerical results concerning the pricing formula in Prop. \ref{PRO:ApproxBSjump}}

The $J_i$ are assumed to be independent and  normally distributed random variables
\begin{equation*}
J_i \sim \mathcal{N}(\gamma,\delta^2)\, , \, \mbox{ for all } \, i \in \{1,2,\dots,N_T\}\,, \qquad \gamma = 0.05, \qquad \delta = 0.02 \;,
\end{equation*}
and $\lambda = 2 $. In particular we are aiming at computing the expectation in \eqref{sec:eqExpect} for the model described in Prop. \ref{PRO:ApproxBSjump}. In the present case we have that this expectation in equal to

\begin{equation}
\begin{split}
&\Expectrn{\Indicator{D}e^{X_0^T}\beta_1 T}+\Expectrn{\Indicator{D}e^{X_0^T}\beta_2 T^2}+\Expectrn{\Indicator{D}e^{X_0^T}\beta_3W_T}\\
+& \Expectrn{\Indicator{D}e^{X_0^T}\beta_4W_T^2}+\Expectrn{\Indicator{D}e^{X_0^T}\beta_5 T W_T}-\Expectrn{\Indicator{D}e^{X_0^T}\beta_6 \int_0^T W_s ds} \\ +&K_2\Expectrn{\Indicator{D}e^{X_T^0}\lambda T \left( e^{\gamma+\frac{\delta^2}{2}}-1 \right)}+K_2\Expectrn{\Indicator{D}e^{X_T^0}\sum_{i=1}^{N_T}J_i}\;.\label{sec:eqLin3.1}
\end{split}
\end{equation}
By means of the independence of the jumps and $\mathbb{E} \left [\sum_{i=1}^{N_T}J_i\right ]= \delta \lambda T$, we can rewrite \eqref{sec:eqLin3.1} as
\begin{equation}
\begin{split}
&\Expectrn{\Indicator{D}e^{X_0^T}\beta_1 T}+\Expectrn{\Indicator{D}e^{X_0^T}\beta_2 T^2}+\Expectrn{\Indicator{D}e^{X_0^T}\beta_3W_T}\\
+& \Expectrn{\Indicator{D}e^{X_0^T}\beta_4W_T^2}+\Expectrn{\Indicator{D}e^{X_0^T}\beta_5 T W_T}-\Expectrn{\Indicator{D}e^{X_0^T}\beta_6 \int_0^T W_s ds}\\ +&K_2\Expectrn{\Indicator{D}e^{X_T^0}\lambda T \left( e^{\gamma+\frac{\delta^2}{2}}-1 \right)}+K_2 \lambda T \delta \Expectrn{\Indicator{D}e^{X_T^0}}\;.\label{sec:eqExp3}
\end{split}
\end{equation}
We shall then compute {\it multi-element} PCE-approximations for this expression.

The parameters are taken from Table \ref{tab:tab8} and the three spot prices, resp, volatilities, considered  are $s_0 \in \{90,100,110\}$, resp. $\sigma_0 \in \{15 \%, 25 \%, 35 \%\}$.

\begin{table}[!h]
\centering
\begin{tabular}{|c|c|ccc|ccc|}
\hline
 &&\multicolumn{3}{c}{$\epsilon = 0.1$}  & \multicolumn{3}{|c|}{$\epsilon = 0.01$} \\ 
\hline
& & Analytical & PCE & standard MC &Analytical & PCE & standard MC \\[1pt]
\hline 
\multirow{2}{*}{$\sigma_0 = 15 \%$} &
 Results & 1.45057 & 1.45049 &1.44774 &1.12927 & 1.12927 & 1.12870\\ 
 & Error &&7.9315e-05 & 8.2194e-03 && 7.9315e-06 &8.2548e-04 \\ 
\hline
\multirow{2}{*}{$\sigma_0 = 25 \%$} &
 Results & 3.82504 & 3.82488 &3.83225 &3.28856 & 3.28855 & 3.28849\\ 
 & Error &&1.5990e-04 & 1.1922e-02 && 1.5990e-05 &1.1596e-03 \\ 
\hline
\multirow{2}{*}{$\sigma_0 = 35 \%$} &
 Results & 6.44932 & 6.44905 &6.44766 &5.71657 & 5.71654 & 5.71482\\ 
 & Error &&2.7379e-04 & 1.5876e-02 && 2.7379e-05 &1.5413e-03 \\ 

  \hline
\end{tabular}
\caption{Numerical values for PCE and MC estimation of equation \eqref{sec:eqExpect}, $s_0 = 90$, $\alpha_0 = 0.3$, $\alpha_1 = 0.5$, $\sigma_1 = 0.10 $, $r=0.03$, $K=100$,$\lambda = 2$, $\gamma = 0.05$, $\delta =0.02$ and $T=0.5$.} \label{tab:tab13}
\end{table}

\begin{table}[!h]
\centering
\begin{tabular}{|c|c|ccc|ccc|}
\hline
 &&\multicolumn{3}{c}{$\epsilon = 0.1$}  & \multicolumn{3}{|c|}{$\epsilon = 0.01$} \\ 
\hline
& & Analytical & PCE & standard MC &Analytical & PCE & standard MC \\[1pt]
\hline 
\multirow{2}{*}{$\sigma_0 = 15 \%$} &
 Results & 5.53650 & 5.53637 &5.53931 &5.03945 & 5.03944 & 5.04061\\ 
 & Error &&1.2763e-04 & 9.2641e-03 && 1.2763e-05 &9.3834e-04 \\ 
\hline
\multirow{2}{*}{$\sigma_0 = 25 \%$} &
 Results & 8.50577 & 8.50556 &8.52655 &7.83481 & 7.83479 & 7.83473\\ 
 & Error &&2.0666e-04 & 1.3297e-02 && 2.0666e-05 &1.3194e-03 \\ 
\hline
\multirow{2}{*}{$\sigma_0 = 35 \%$} &
 Results & 11.50225 & 11.50192 &11.49891 &10.63364 & 10.63361 & 10.63396\\ 
 & Error &&3.3318e-04 & 1.7426e-02 && 3.3318e-05 &1.7601e-03 \\

 \hline
\end{tabular}
\caption{Numerical values for PCE and MC estimation of equation \eqref{sec:eqExpect}, $s_0 = 100$, $\alpha_0 = 0.3$, $\alpha_1 = 0.5$, $\sigma_1 = 0.10 $, $r=0.03$, $K=100$, $\lambda = 2$, $\gamma = 0.05$, $\delta =0.02$ and $T=0.5$.} \label{tab:tab14}
\end{table}

\begin{table}[!h]
\centering
\begin{tabular}{|c|c|ccc|ccc|}
\hline
 &&\multicolumn{3}{c}{$\epsilon = 0.1$}  & \multicolumn{3}{|c|}{$\epsilon = 0.01$} \\ 
\hline
& & Analytical & PCE & standard MC &Analytical & PCE & standard MC \\[1pt]
\hline 
\multirow{2}{*}{$\sigma_0 = 15 \%$} &
 Results & 12.70933 & 12.70924 &12.72127 &12.37689 & 12.37688 & 12.37642\\ 
 & Error &&9.7072e-05 & 1.2457e-02 && 9.7072e-06 &1.2373e-03 \\ 
\hline
\multirow{2}{*}{$\sigma_0 = 25 \%$} &
 Results & 15.16320 & 15.16299 &15.16028 &14.53658 & 14.53656 & 14.53696\\ 
 & Error &&2.0462e-04 & 1.5486e-02 && 2.0462e-05 &1.5493e-03 \\ 
\hline
\multirow{2}{*}{$\sigma_0 = 35 \%$} &
 Results & 17.99767 & 17.99732 &18.01788 &17.10754 & 17.10750 & 17.10605\\ 
 & Error &&3.5427e-04 & 2.0558e-02 && 3.5427e-05 &1.9645e-03 \\ 
 
  \hline
\end{tabular}
\caption{Numerical values for PCE and MC estimation of equation \eqref{sec:eqExpect}, $s_0 = 110$, $\alpha_0 = 0.3$, $\alpha_1 = 0.5$, $\sigma_1 = 0.10 $, $r=0.03$, $K=100$, $\lambda = 2$, $\gamma = 0.05$, $\delta =0.02$ and $T=0.5$.} \label{tab:tab15}
\end{table}

\section{Conclusions}
In this work we have focused our attention on the analysis of the small noise asymptotic expansions for particular  classes of local volatility models arising in finance. We have given  explicit expressions for the associated coefficients, along with  accurate estimates on the remainders. Furthermore we have provided a detailed numerical analysis, with accuracy comparisons, of the obtained results exploiting the standard Monte Carlo technique as well as the  so called Polynomial Chaos Expansion approach.
We would like to underline that our approach allows to consider, other than the well know Gaussian noise component, a realistic stochastic perturbation of jump type.

In a future work we plan to use the latter extension, along with the described asymptotic expansion techniques, to study particular types of implied volatilities models and further related functionals, as suggested by one of the anonymous reviewer. Such developments will be also the basis for an extensive calibration work on real financial data.

\section*{Acknowledgement}
The first author gratefully acknowledges the hospitality and the support received from the University of Verona in the framework of the Cooperint internationalization program, by the CIRM ({\it Centro Internazionale per la Ricerca Matematica}) funded by the FBK ({\it Fondazione Bruno Kessler}) and by the  University of Trento.
The second and the third author would like to thank the {\it Gruppo Nazionale per l'Analisi Matematica, la Probabilità e le loro Applicazioni} (GNAMPA) for the financial support that has  funded the present research within the project called {\it Set-valued and optimal transportation theory methods to model financial markets with transaction costs both in deterministic and  stochastic frameworks}.

\cleardoublepage

\end{document}